\documentclass[a4paper, reqno]{amsart} %
\usepackage{amssymb, mathdots}
\usepackage{mathabx}
\usepackage{mathrsfs, hhline}
\usepackage[utf8]{inputenc}
\usepackage{amsmath,  graphicx, wasysym}
\usepackage{hyperref}
\usepackage{caption}
\usepackage{mathtools, tensor}
\usepackage{tikz}
\usetikzlibrary{matrix, arrows.meta}
\usetikzlibrary{cd}
\usepackage{wrapfig}  
\usepackage{enumerate}
\DeclareSymbolFont{bbold}{U}{bbold}{m}{n}
\DeclareSymbolFontAlphabet{\mathbbold}{bbold}
\def\qmod#1#2{{\hbox{}^{\displaystyle{#1}}}\!\big/\!\hbox{}_{
\displaystyle{#2}}}

\def\C{{\mathbb C}}

\def\H{{\mathbb H}}

\def\N{{\mathbb N}}

\def\R{{\mathbb R}}

\def\textmap#1{\mathop{\vbox{\ialign{
                                  ##\crcr
      ${\scriptstyle\hfil\;\;#1\;\;\hfil}$\crcr
      \noalign{\kern 1pt\nointerlineskip}
      \rightarrowfill\crcr}}\;}}
\def\bigtextmap#1{\mathop{\vbox{\ialign{
                                  ##\crcr
      ${\hfil\;\;#1\;\;\hfil}$\crcr
      \noalign{\kern 1pt\nointerlineskip}
      \rightarrowfill\crcr}}\;}}
      
\newcommand{\cal}{\mathcal}
\def\textlmap#1{\mathop{\vbox{\ialign{
                                  ##\crcr
      ${\scriptstyle\hfil\;\;#1\;\;\hfil}$\crcr
      \noalign{\kern-1pt\nointerlineskip}
      \leftarrowfill\crcr}}\;}}
 
\def\ag{{\mathfrak a}}

\def\g{{\mathfrak g}}
\def\hg{{\mathfrak h}}

\def\kg{{\mathfrak k}}

\def\qg{{\mathfrak q}}

\def\Hg{{\mathfrak H}}

\theoremstyle{remark}
\newtheorem{ex}{Example}[section]

\theoremstyle{plain}
\newtheorem{sz}{Satz}[section]
\newtheorem{thry}[sz]{Theorem}
\newtheorem{pr}[sz]{Proposition}
\newtheorem{re}[sz]{Remark}
\newtheorem{co}[sz]{Corollary}
\newtheorem{dt}[sz]{Definition}
\newtheorem{lm}[sz]{Lemma}

\def\Tr{\mathrm {Tr}}
\def\End{\mathrm {End}}
\def\Aut{\mathrm {Aut}}

\def\U{\mathrm{U}}
\def\SU{\mathrm {SU}}
\def\SO{\mathrm {SO}}

\def\GL{\mathrm {GL}}
\def\PGL{\mathrm {PGL}}
\def\PSL{\mathrm {PSL}}
\def\SL{\mathrm {SL}}
\def\sl{\mathrm {sl}}
 
\def\u{\mathrm {u}}
\def\su{\mathrm {su}}
\def\so{\mathrm {so}}

\def\gl{\mathrm {gl}}

\def\Hom{\mathrm{Hom}}
\def\Herm{\mathrm{Herm}}

\def\vol{\mathrm{vol}}

\def\id{ \mathrm{id}}
\def\im{\mathrm{im}}
\def\rk{\mathrm {rk}}
\def\ad{\mathrm {ad}}
\def\Ad{\mathrm {Ad}}

\def\S{\mathrm{S}}

\newcommand\smvee{{\hskip -0.15ex \raise 0.2ex\hbox{$\scriptscriptstyle\vee$}\hskip -0.3ex}}
\def\trp#1{\tensor[^{\mathrm{t}}]{#1}{}}

\makeatletter
\DeclareFontFamily{OMX}{MnSymbolE}{}
\DeclareSymbolFont{MnLargeSymbols}{OMX}{MnSymbolE}{m}{n}
\SetSymbolFont{MnLargeSymbols}{bold}{OMX}{MnSymbolE}{b}{n}
\DeclareFontShape{OMX}{MnSymbolE}{m}{n}{
    <-6>  MnSymbolE5
   <6-7>  MnSymbolE6
   <7-8>  MnSymbolE7
   <8-9>  MnSymbolE8
   <9-10> MnSymbolE9 
  <10-12> MnSymbolE10
  <12->   MnSymbolE12
}{}
\DeclareFontShape{OMX}{MnSymbolE}{b}{n}{
    <-6>  MnSymbolE-Bold5
   <6-7>  MnSymbolE-Bold6
   <7-8>  MnSymbolE-Bold7
   <8-9>  MnSymbolE-Bold8
   <9-10> MnSymbolE-Bold9
  <10-12> MnSymbolE-Bold10
  <12->   MnSymbolE-Bold12
}{}

\let\llangle\@undefined
\let\rrangle\@undefined
\DeclareMathDelimiter{\llangle}{\mathopen}%
                     {MnLargeSymbols}{'164}{MnLargeSymbols}{'164}
\DeclareMathDelimiter{\rrangle}{\mathclose}%
                     {MnLargeSymbols}{'171}{MnLargeSymbols}{'171}
\makeatother
\newcommand{\vertiii}[1]{{\left\vert\kern-0.25ex\left\vert\kern-0.25ex\left\vert #1 
    \right\vert\kern-0.25ex\right\vert\kern-0.25ex\right\vert}} 

\begin{document} 

\title[A generalisation of Bryant theorem]{A gauge theoretical generalization of Bryant's correspondence}
\author{Andrei Teleman}
\address{Andrei Teleman: Aix Marseille Univ, CNRS,  I2M, UMR 7373, Marseille, France, email: andrei.teleman@univ-amu.fr}

\begin{abstract}

A classical theorem in the theory of minimal surfaces establishes a correspondence between minimal surfaces in $\R^n$ and null holomorphic curves in $\C^n$. A  hyperbolic version of this correspondence is due to Bryant  who showed that null holomorphic curves in $\SL(2,\C)$ correspond to  CMC-1  surfaces   in the hyperbolic space $\H^3$.  

Several authors   obtained and studied a relativistic version of Bryant's correspondence:   CMC-1 immersions in the hyperbolic space $\H^3$ are replaced by space-like  CMC-1 immersion in the de Sitter space $\mathrm{dS}_3$.

We  prove a mutual generalisation  of all these results: let $H$ be a real Lie group, $P\textmap{\pi}M$ a principal $H$-bundle, $A$ a connection on $P$ and $\alpha\in A^1_\Ad(P,\hg)$ a tensorial 1-form of type $\Ad$ which induces isomorphisms $A_\xi\textmap{\simeq} \hg$. Such a pair $(\alpha,A)$ defines an almost complex structure $J^\alpha_A$ on $P$, which is integrable if and only $(\alpha,A)$ is a solution of a  gauge-invariant non-linear first order differential system.  A non-degenerate symmetric $\Ad_H$-invariant bilinear form $g$  on $\hg$ defines pseudo-Riemannian metrics  $g^\alpha_M$, $\g^\alpha_A$ on $M$, respectively $P$, and a non-degenerate bilinear form $\omega^{\alpha,g}_A:T_P\times_P T_P\to \C$ which is $\C$-bilinear with respect to $J^\alpha_A$ and holomorphic when $J^\alpha_A$ is integrable. Assuming that this is the case, we have a Bryant type correspondence between space-like, $\omega^{\alpha,g}_A$-isotropic holomorphic immersions $Y\to P$ and space-like conformal immersions $Y\to (M,g^\alpha_M)$ whose mean curvature vector field is given by a simple explicit formula. 

In particular, one obtains such a correspondence for any principal bundle of the form $G\to  G/H$, where $G$ is a complex Lie group, and $H$ is a real form of $G$ endowed with a non-degenerate, $\Ad_H$-invariant, symmetric bilinear form $g$ on its Lie-algebra $\hg$.

\end{abstract}

\subjclass[2000]{53C42, 53C30, 53C07, 22F30}

\maketitle

\tableofcontents

\section*{Acknowledgements}
I thank my former PhD students Raphael Zentner and Nicolas Al Choueiry for useful remarks and suggestions concerning the presentation of this article.  

\section{Introduction}
\label{intro}

Let  $Y$ be a connected surface, $J$ an almost complex structure on $Y$ and $(Y,J)$ the corresponding Riemann surface.

We start by recalling the following well known fundamental result in the theory of minimal surfaces in the Euclidian space (see \cite{Chern}, \cite{Oss}, \cite{ChOss}, \cite[section III.3]{La}, \cite[Theorem 7.1]{Fo}):
\begin{thry}\label{Th1-intro}  Let $\beta=(\beta_1,\dots,\beta_n)\in \Omega^1(Y,\C^n)$ be a $\C^n$-valued holomorphic 1-form on $Y$ satisfying the conditions: 
\begin{enumerate}

\item[(C1)] $\beta$  has no real periods on $Y$, i.e. for any smooth loop $\nu:[a,b]\to Y$ on $Y$ we have
$\Re\big(\int_\nu \beta\big)=0$.
\item[(C2)] $\beta$ is nowhere vanishing, i.e. $\sum_{j=1}^k\beta_j\bar\beta_j$ is a volume form on $Y$.

\item [(C3)] $\sum_{j=1}^n \beta_j^2=0$.
\end{enumerate}

 Fix $y_0\in Y$. Then the formula
\begin{equation}\label{phi-in-terms-of-omega}
\varphi(y)=\Re \big( \int_{\nu_y} \beta	\big)
\end{equation}
(where $\nu_y$ is any smooth path joining $y_0$ to $y$ in $Y$) defines a minimal conformal  immersion $\varphi:Y\to \R^n$.

Conversely, any minimal conformal immersion $\varphi:Y\to \R^n$  is given by   (\ref{phi-in-terms-of-omega}) for a  $\C^n$-valued holomorphic 1-form $\beta$ on $Y$ satisfying   conditions (C1)-(C3).
\end{thry}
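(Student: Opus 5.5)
The plan is to work locally in a holomorphic coordinate $z=x+iy$ on $Y$ and reduce everything to the relation $\beta = 2\,\partial\varphi$, that is, $\beta = (\varphi_x - i\varphi_y)\,dz$. Three classical facts will then close the argument:
\begin{enumerate}
\item[(i)] $\partial\varphi$ is a holomorphic $(1,0)$-form if and only if each component of $\varphi$ is harmonic, i.e. $\partial\bar\partial\varphi=0$, equivalently $\Delta\varphi=0$ in local coordinates;
\item[(ii)] $\sum_j(\partial\varphi_j)^2=0$ is equivalent to conformality, since $\sum_j(\varphi_{j,x}-i\varphi_{j,y})^2=|\varphi_x|^2-|\varphi_y|^2-2i\langle\varphi_x,\varphi_y\rangle$;
\item[(iii)] for a conformal immersion with conformal factor $\lambda$, so that $|\varphi_x|^2=|\varphi_y|^2=\lambda$, one has $\Delta\varphi = 2\lambda\,\vec H$, where $\vec H$ is the mean curvature vector.
\end{enumerate}

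\textbf{Direct implication.} Start from $\beta$ satisfying (C1)--(C3). A holomorphic 1-form is closed, so $\Re\beta$ is a closed real 1-form. Condition (C1) says that all its periods vanish, so $\varphi(y)=\Re\int_{\nu_y}\beta$ is well defined, independent of the path, and smooth, with $d\varphi=\Re\beta$. Since $\beta$ is of type $(1,0)$, we get $d\varphi=\tfrac12(\beta+\bar\beta)$, and therefore $\partial\varphi=\tfrac12\beta$. Writing $\beta=f\,dz$ with $f$ holomorphic gives $\varphi_x=\Re f$ and $\varphi_y=-\Im f$. Condition (C2) says $f\neq0$. Condition (C3) gives $\sum f_j^2=0$, so $\Re f$ and $\Im f$ are orthogonal vectors of equal length; since $f\neq 0$, that length is positive. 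Hence $d\varphi$ has rank $2$ and $\varphi$ is a conformal immersion. Finally, $\bar\partial\partial\varphi=\tfrac12\bar\partial\beta=0$, so $\varphi$ is harmonic, and by (iii) it is minimal.

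\textbf{Converse.} Given a minimal conformal immersion $\varphi$, set $\beta:=2\partial\varphi$; this is a globally defined $(1,0)$-form. By (iii), minimality gives harmonicity, so $\beta$ is holomorphic by (i). Conformality gives (C3) by (ii). The immersion condition gives (C2), because $\sum|f_j|^2=|\varphi_x|^2+|\varphi_y|^2>0$. Next, $\Re\beta=d\varphi$ is exact, so its periods along loops are zero, which is (C1). Moreover $\Re\int_{\nu_y}\beta=\varphi(y)-\varphi(y_0)$, so $\varphi$ is given by (\ref{phi-in-terms-of-omega}) up to the additive constant $\varphi(y_0)$. Either this constant is absorbed into the statement, meaning $\varphi$ is determined up to translation, or one normalizes $\varphi(y_0)=0$.

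\textbf{Main obstacle.} The only substantive step is (iii), the identity $\Delta_g\varphi=2\vec H$ for isometric immersions into $\R^n$, here expressed in conformal coordinates. I would prove it directly. Differentiate $|\varphi_x|^2=|\varphi_y|^2$ and $\langle\varphi_x,\varphi_y\rangle=0$ with respect to $x$ and $y$. This shows that $\varphi_{xx}+\varphi_{yy}$ is orthogonal to both $\varphi_x$ and $\varphi_y$, so it is normal. Its normal part is the trace of the second fundamental form, which is $2\lambda\vec H$. Everything else in the proof is linear algebra and the closedness of holomorphic 1-forms.
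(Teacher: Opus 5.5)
Your proof is correct. The paper gives no direct proof of this statement. It quotes it from the classical literature, where the argument is essentially yours, and recovers it, in the form of Theorem~\ref{Th2-intro}, as the abelian case $G=\C^n$, $H=i\R^n$ of Theorem~\ref{main-real-forms}.

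In that route $[\cdot,\cdot]_H=0$, so Theorem~\ref{main}(1)(b) gives $\vec\Hg_\varphi=0$. The computation behind it is Proposition~\ref{FormulaHLm}, which on flat $\R^n$ reads $\vec\Hg_\varphi\vol_\varphi=-\frac12 d(d\varphi\circ J)=\frac12(\varphi_{xx}+\varphi_{yy})\,dx\wedge dy$. This is exactly your identity (iii).

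For the converse, the paper never writes $\beta$ down on $Y$. It obtains holomorphic lifts $\varphi+i\psi$ as parallel sections of the flat connection $B_\varphi=A_\varphi+J\alpha_\varphi$. Here flatness means $d(d\varphi\circ J)=0$, and a parallel section is a harmonic conjugate, $d\psi=-d\varphi\circ J$. Such lifts in general exist only on a covering $\tilde Y$. The deck group acts by translations in $i\R^n$ (Proposition~\ref{existence-Weierstrass}), so $\beta=d\tilde f$ descends to $Y$ with purely imaginary periods.

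Since $d\tilde f=2\partial\Re\tilde f$ for holomorphic $\tilde f$, your $\beta:=2\partial\varphi$ is precisely this descended form. You therefore bypass lifts, coverings and connections altogether, and (C1) is automatic from $\Re\beta=d\varphi$.

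Your shortcut depends on $H$ being abelian. Lifts then differ by constant translations, which do not change their differential, and the mean curvature condition is plain harmonicity. The paper's heavier route is built to work unchanged for non-abelian $H$ (Bryant's $\H^3$, de Sitter) and for bundles such as $\Gamma\backslash G\to\Gamma\backslash G/H$. There the condition on $\varphi$ involves the bracket $[\cdot,\cdot]_\alpha$, and lifts come from a flat connection rather than from a single integration.

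Your remark that the converse holds only up to the additive constant $\varphi(y_0)$ is also correct. The formulation of Theorem~\ref{Th2-intro} avoids this issue.
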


The second part of this statement is a holomorphic representation theorem for minimal conformal immersion in the Euclidian space. For $n=3$ this result can be further refined giving an explicit  construction method -- called the Weierstrass representation theorem   --  for minimal conformal immersions on simply connected Riemann surfaces (see for instance \cite[Folgerung 3.36]{Ku}, \cite[Satz 8.5.1]{EsJo}).
\vspace{1mm}

Let $\varphi:Y\to \R^n$ be a smooth map. The existence of a $\C^n$-valued holomorphic 1-form $\beta$ on $Y$ satisfying (C1) for which (\ref{phi-in-terms-of-omega}) holds is equivalent to the existence of a commutative diagram
\begin{equation}\label{diag-intro}
 \begin{tikzcd}
 \tilde Y \ar[r, "\tilde f"] \ar[d, "c"'] & \C^n\ar[d, "\Re"]\\
 Y\ar[r, "\varphi"]& \R^n	
 \end{tikzcd},  
\end{equation}
where $c$ is a covering map of Riemann surfaces and $\tilde f$ is a  holomorphic map.  If $\varphi:Y\to \R^n$  is a minimal conformal immersion, then  for any such diagram the form $\beta=d\tilde f$ also satisfies conditions (C2)-(C3), in other words $\tilde f$ is a holomorphic immersion which is isotropic (null) with respect to the standard symmetric form $\omega_0\coloneq \sum_{j=1}^n dz_j^2$   on $\C^n$ (see section \ref{isotropic-section} in this article).  

Theorem \ref{Th1-intro} can be formulated as follows:

\begin{thry}\label{Th2-intro} 
Let $f:Y\to \C^n$ be an $\omega_0$-isotropic holomorphic immersion. Then $\Re(f)$ is a minimal conformal immersion. 

Conversely, let $\varphi:Y\to \R^n$  is a minimal conformal immersion. Then $\varphi$ fits in a commutative diagram of the form (\ref{diag-intro}) with $\tilde f$ holomorphic. For any such commutative diagram,   $\tilde f$ is an $\omega_0$-isotropic holomorphic immersion.
 \end{thry}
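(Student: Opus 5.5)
The plan is to reduce everything to a local computation in a holomorphic coordinate $z=x+iy$ on $Y$, using the elementary facts relating holomorphic derivatives to real derivatives of the real part. Throughout, $\langle\cdot,\cdot\rangle$ denotes the Euclidean inner product, extended $\C$-bilinearly to $\C^n$, so that $\omega_0(v,v)=\langle v,v\rangle$. I would also use, as the paper does, the standard fact that a conformal immersion $\varphi$ satisfies $\Delta_{g}\varphi=2\vec H$, so that $\varphi$ is minimal if and only if it is harmonic with respect to the conformal structure of $Y$.

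\textbf{Direct part.} Let $f:Y\to\C^n$ be a holomorphic map and put $\varphi=\Re f$. Locally write $f'=\partial f/\partial z$. Since $\bar\partial f=0$, we get $\partial\varphi/\partial z=\tfrac12 f'$. With $\partial_z=\tfrac12(\partial_x-i\partial_y)$ this gives
\[
\varphi_x-i\varphi_y=f' ,
\]
and therefore
\[
\langle f',f'\rangle=|\varphi_x|^2-|\varphi_y|^2-2i\langle\varphi_x,\varphi_y\rangle .
\]
It follows that $\omega_0$-isotropy of $f$ is exactly conformality of $\varphi$. Moreover, $f$ is an immersion iff $f'\neq0$, which (under isotropy) is equivalent to $|\varphi_x|=|\varphi_y|\neq0$, i.e. $\varphi$ is an immersion. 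Finally, $\varphi$ is harmonic because it is the real part of a holomorphic map. Hence $\varphi$ is a minimal conformal immersion.

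\textbf{Converse part.} Let $\varphi$ be a minimal conformal immersion. Harmonicity of $\varphi$ means $\partial_{\bar z}\partial_z\varphi=0$, so
\[
\beta:=2\,\partial\varphi=2\frac{\partial\varphi}{\partial z}\,dz
\]
is a well-defined global holomorphic $\C^n$-valued $1$-form on $Y$, and $\Re\beta=d\varphi$.
\begin{itemize}
\item Take for $c:\tilde Y\to Y$ the universal cover. Then $c^*\beta$ is exact: $c^*\beta=d\tilde g$ for a holomorphic map $\tilde g:\tilde Y\to\C^n$.
\item Since $d\Re\tilde g=\Re c^*\beta=d(\varphi\circ c)$, the function $\Re\tilde g-\varphi\circ c$ is a constant $a\in\R^n$. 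Setting $\tilde f=\tilde g-a$ gives $\Re\tilde f=\varphi\circ c$, which is the required diagram \eqref{diag-intro}.
\end{itemize}

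\textbf{Every diagram gives an isotropic immersion.} Now take any commutative diagram \eqref{diag-intro} with $\tilde f$ holomorphic and $c$ a holomorphic covering. Then $\Re\tilde f=\varphi\circ c$ is again a conformal immersion, because $c$ is a local biholomorphism. Applying the identity $\varphi_x-i\varphi_y=f'$ to $\tilde f$ in local coordinates on $\tilde Y$ shows that $\tilde f$ is $\omega_0$-isotropic and immersive. Note that this step does not use minimality; minimality was needed only to make $\beta$ holomorphic in the construction of the diagram.

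\textbf{Main obstacle.} The only non-formal input is the equivalence between minimality and harmonicity for conformal immersions, $\Delta\varphi=2\vec H$. I take this as standard (it is the classical result the paper cites). All remaining steps are the elementary identity above together with path-lifting on the covering. One also has to watch normalization conventions: the correspondence $\beta=d\tilde f=2\partial\varphi$ is consistent with \eqref{phi-in-terms-of-omega}.
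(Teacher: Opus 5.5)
Your proof is correct, but it is not how the paper obtains the statement.

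\textbf{How the paper gets it.} The paper gives no direct argument for Theorem~\ref{Th2-intro}. It presents it as a reformulation of the classical Theorem~\ref{Th1-intro}, citing the literature. It then recovers it, in the first example of section~\ref{Examples-subsection}, as the case $G=\C^n$, $H=i\R^n$ of Theorem~\ref{main-real-forms}. Since $\hg$ is abelian, $[\cdot,\cdot]_H=0$, and formula (\ref{Hg-RealForms-formula}) reduces to $\vec\Hg_\varphi=0$.

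\textbf{How your steps correspond.} Along the paper's route, your identity $\varphi_x-i\varphi_y=f'$ is the abelian case of the Cauchy--Riemann system (\ref{HolCond}) and of Remark~\ref{NullHolChar}. That remark also gives the ``for any diagram'' part.

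The identity $\Delta_g\varphi=2\vec\Hg$ is not invoked as such, contrary to your ``as the paper does''. Its role is played by Proposition~\ref{FormulaHLm} together with $d\theta_f=0$ (Proposition~\ref{dthetaf:pr}).

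The converse goes through the flat connection $B_\varphi=A_\varphi+J\alpha_\varphi$. Here its flatness reduces to $d(d\varphi\circ J)=0$, i.e.\ harmonicity of $\varphi$. Its parallel sections are the imaginary parts of local holomorphic lifts, i.e.\ local conjugate harmonic maps of $\varphi$.

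The covering is then a leaf of $B_\varphi$ (Proposition~\ref{existence-Weierstrass}). Concretely, it is the normal covering attached to the kernel of the imaginary-period map of your $\beta$, rather than the universal cover.

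\textbf{What each approach buys.} Your argument is the abelian shadow of the paper's. It is self-contained and elementary, resting only on the classical identity $\Delta_g\varphi=2\vec\Hg$ rather than on Theorem~\ref{mainZe} and Corollary~\ref{coro-LC}. The paper's mechanism buys uniformity: the same proof yields Bryant's theorem and all the non-abelian cases. It also gives a smaller canonical covering, with the equivariance information of Proposition~\ref{existence-Weierstrass}.
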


Bryant's renowned theorem gives a similar result for conformal CMC-1 immersions in the hyperbolic 3-space $\H^3$  \cite[Theorem A]{Br}. Let 
$$\pi:\SL(2,\C)\to \H^3=\SL(2,\C)/\SU(2)$$
 be the canonical projection and endow   $\H^3$ with the standard hyperbolic metric. Let $\omega$ be the holomorphic bi-invariant symmetric bilinear form on $\SL(2,\C)$ which coincides with the Killing form on the Lie algebra $T_{I_2}\SL(2,\C)=\sl(2,\C)$.  Then 

\begin{thry} \label{Th3-intro} 
Let $f:Y\to \SL(2,\C)$ be an $\omega$-isotropic holomorphic immersion. Then $\pi\circ f$ is a CMC-1 conformal immersion. Conversely, any CMC-1 conformal immersion $\varphi:Y\to\R^n$ fits in a commutative diagram
\begin{equation}\label{diag-intro-Bryant}
 \begin{tikzcd}
 \tilde Y \ar[r, "\tilde f"] \ar[d, "c"'] & \SL(2,\C)\ar[d, "\pi"]\\
 Y\ar[r, "\varphi"]& \H^3	
 \end{tikzcd},  
\end{equation}
where $c$ is a covering map of Riemann surfaces and $\tilde f$ is holomorphic. For any such commutative diagram, $\tilde f$ is an $\omega$-isotropic holomorphic immersion.
\end{thry}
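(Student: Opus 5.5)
We will prove Bryant's theorem by a direct moving-frame computation on $\SL(2,\C)$. We treat $\pi:\SL(2,\C)\to\H^3$ as a principal $\SU(2)$-bundle and use the Cartan decomposition $\sl(2,\C)=\su(2)\oplus i\,\su(2)$.

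\emph{Set-up.} Let $f:Y\to\SL(2,\C)$ be holomorphic and put $\beta\coloneq f^{-1}df$. This is a holomorphic $\sl(2,\C)$-valued $(1,0)$-form satisfying the Maurer--Cartan equation $d\beta+\beta\wedge\beta=0$. Since $\omega$ is bi-invariant, $\omega$-isotropy of $f$ means $\kappa(\beta,\beta)=0$, where $\kappa$ is the Killing form. Since $f$ is holomorphic, $f$ is an immersion iff $\beta$ is nowhere zero. Decompose $\beta=\theta+\alpha$ with $\theta$ the $\su(2)$-part and $\alpha$ the $i\su(2)$-part.

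\emph{Reading off the geometry of $\varphi=\pi\circ f$.} Identify $T_{\pi(g)}\H^3$ with $i\su(2)$ via left translation by $g$. Then $\theta$ is the pull-back of the Levi-Civita connection of the frame $f$, and $\alpha$ represents $d\varphi$. The hyperbolic metric is a multiple of $\kappa$ restricted to $i\su(2)$, which is positive definite there.

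\emph{Conformality and immersion.} Write $\beta=\alpha+\theta$. Because $\beta$ is of type $(1,0)$, we have $\beta(Jv)=i\beta(v)$. Taking the $\su(2)$ and $i\su(2)$ parts gives $\theta(Jv)=i\alpha(v)$ and $\alpha(Jv)=i\theta(v)$. Hence $\alpha(v)=-i\theta(Jv)$, which determines $\theta$ from $\alpha$ pointwise.
\begin{itemize}
\item The isotropy condition $\kappa(\beta,\beta)=0$ splits into real and imaginary parts.
\item Combined with $\theta(Jv)=i\alpha(v)$, it gives $|\alpha(v)|=|\alpha(Jv)|$ and $\langle\alpha(v),\alpha(Jv)\rangle=0$. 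This is conformality of $\varphi$.
\item $\alpha(v)=0$ would force $\beta(v)=0$, so $\varphi$ is an immersion.
\end{itemize}

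\emph{Mean curvature.} Take the $i\su(2)$-component of the Maurer--Cartan equation: $d\alpha+[\theta\wedge\alpha]=0$. This is the torsion-free structure equation, so $\theta$ is exactly the induced connection; it carries no new information. The $\su(2)$-component reads $d\theta+\tfrac12[\theta\wedge\theta]+\tfrac12[\alpha\wedge\alpha]=0$.

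The content is in the pointwise relation $\theta=J^*$-twist of $\alpha$. It says that the connection form along the surface equals the $i$-rotated tangent frame. Write the second fundamental form through the normal component $\nu\in i\su(2)$ of the frame. Then $\theta(v)=[\nu,\alpha(v)]$ up to a tangential rotation (with a suitable normalization), and comparing this with $\theta(v)=-i\alpha(Jv)$ forces the shape operator to be $\id+$(trace-free part). That is, $H=1$, with the normalization fixed by choosing the Killing-form multiple giving curvature $-1$.

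This identification, computed in an adapted frame $e_1,e_2,\nu$, is the main obstacle. In particular we must get the constants right: the sign of $\kappa$ on $\su(2)$ and the factor relating $\kappa$ to the curvature $-1$ metric.

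\emph{Converse.} Let $\varphi$ be a conformal CMC-1 immersion. Choose a lift $F$ of $\varphi$ to $\SL(2,\C)$ adapted to its frame; this is possible locally, and globally after passing to the universal cover $c:\tilde Y\to Y$, using that $\SU(2)$ is simply connected. Define $\beta=\alpha+\theta$ with $\theta(v)\coloneq-i\alpha(Jv)$. The CMC-1 condition, read backwards through the computation above, together with the Gauss--Codazzi equations, shows that $\beta$ is a flat $\sl(2,\C)$-connection. Equivalently, $\beta$ satisfies Maurer--Cartan, so integrating it on the simply connected $\tilde Y$ gives $\tilde f$ with $\tilde f^{-1}d\tilde f=\beta$.

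By construction $\beta$ is of type $(1,0)$ and $\kappa(\beta,\beta)=0$. The first condition together with Maurer--Cartan gives $\bar\partial\tilde f=0$, so $\tilde f$ is holomorphic. Moreover $\pi\circ\tilde f$ differs from $\varphi\circ c$ by a constant isometry, which we absorb by left translation.

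Finally, suppose a diagram (\ref{diag-intro-Bryant}) with $\tilde f$ holomorphic is given. Then $\pi\circ\tilde f=\varphi\circ c$ is a conformal immersion. Write $\tilde f^{-1}d\tilde f=\alpha+\theta$. Holomorphicity gives $\theta(v)=i\alpha(Jv)$, and conformality of the $\alpha$-part yields $\kappa(\beta,\beta)=0$ by the same real/imaginary splitting read in reverse. So $\tilde f$ is an $\omega$-isotropic immersion.
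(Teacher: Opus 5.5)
Your conformality/immersion argument and your last paragraph are fine and agree with the paper's Remark~\ref{NullHolChar}. There is one sign slip: holomorphy gives $\theta(v)=-i\alpha(Jv)$, as you had earlier, not $+i\alpha(Jv)$. The two steps that carry the actual content, however, have genuine gaps.

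\emph{Mean curvature.} The holomorphic lift $f$ is not an adapted frame, and your comparison argument only makes sense in an adapted frame. In the frame $f$ the relation $\theta_f(v)=\pm[\nu_f,\alpha_f(v)]$ holds exactly, where $\nu_f\colon Y\to S^2\subset i\su(2)$ is the unit normal written in that frame. But $\nu_f$ is not constant, and the shape operator is $\id-d\nu_f\circ\alpha_f^{-1}$ (up to sign conventions). So $H=1$ means $\Tr(d\nu_f\circ\alpha_f^{-1})=0$. This is a statement about first derivatives of $\alpha_f$, and it cannot come from the pointwise relation alone. Indeed, for a lift with $\theta_f=-i\,\alpha_f\circ J$ one finds that $(H-1)\nu_f$ is a fixed nonzero multiple of $i\,d\theta_f/\vol_\varphi$.

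The missing input is therefore $d\theta_f=0$. It follows from the $\su(2)$-component of the Maurer--Cartan equation, which you write down but never use, together with $[\alpha_f\wedge\alpha_f]=-[\theta_f\wedge\theta_f]$ (in your conventions). This is the paper's route:
\begin{itemize}
\item Proposition~\ref{dthetaf:pr} gives $d\theta_f=0$.
\item The frame-independent identity $j^\alpha_\varphi(\vec{\Hg}_\varphi)\vol_\varphi=-\frac12 d_{A_\varphi}(J\tilde\alpha_\varphi)$ of Proposition~\ref{FormulaHLm} then turns this into $\vec{\Hg}_\varphi=[\varphi_*u,\varphi_*v]_\alpha$.
\item Finally $|[\varphi_*u,\varphi_*v]_\alpha|=1$, because the normalization $g=-\frac12B$ gives sectional curvature $-1$ (Corollaries~\ref{coro-LC} and~\ref{scalar-mean-curv-coro}).
\end{itemize}

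\emph{Converse.} As written this step is false. Take $F$ adapted and set $\beta=\alpha_F-i\,\alpha_F\circ J$. On a surface $[(\alpha\circ J)\wedge\alpha]=0$ and $[(\alpha\circ J)\wedge(\alpha\circ J)]=[\alpha\wedge\alpha]$. Hence the Maurer--Cartan equation for $\beta$ reduces to $d\alpha_F=0$ and $d(\alpha_F\circ J)=0$. Since $\alpha_F=\omega^1e_1+\omega^2e_2$ for an orthonormal coframe $(\omega^1,\omega^2)$ and a fixed orthonormal pair $e_1,e_2\in i\su(2)$, the condition $d\alpha_F=0$ forces the induced metric to be flat. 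By the Gauss equation a CMC-1 surface has Gauss curvature $-\frac14(k_1-k_2)^2$, so only horospheres pass. Moreover, even when $\beta$ integrates, $\tilde f^{-1}d\tilde f$ and $F^{-1}dF$ have different $\su(2)$-parts, so nothing makes $\pi\circ\tilde f$ congruent to $\varphi$.

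The missing idea is the gauge freedom. Seek the lift as $f=Fk$ with $k$ taking values in $\SU(2)$. Then $\pi\circ f=\varphi$ automatically, and holomorphy becomes the linear equation $dk\,k^{-1}=-(\theta_F+i\,\alpha_F\circ J)$. In other words, $f$ is a parallel section of the connection $B_\varphi=A_\varphi+J\alpha_\varphi$ on $\varphi^*(P)$. By (\ref{Omega(A+alpha)}), the condition $\Omega_A=\frac12[\alpha\wedge\alpha]$ and Proposition~\ref{FormulaHLm}, the curvature of $B_\varphi$ vanishes exactly when $\vec{\Hg}_\varphi=\varphi^*([\cdot,\cdot]_\alpha)/\vol_\varphi$, i.e.\ when $H=1$. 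The covering $c$ is then the holonomy covering of this flat connection (Proposition~\ref{existence-Weierstrass}).
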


In  \cite{AiAk}, \cite{Fu}, \cite{Le} the authors prove and study a relativistic version of Bryant representation theorem in which the compact real form $\SU(2)$ of $\SL(2,\C)$ is replaced by the split real form $\SL(2,\R)$ and the hyperbolic space $\H^3$  by the de Sitter space $dS_3$.

The goal of this article is to give a ``universal" generalisation of these results in the framework of principal bundles. Let $H$ be a (non necessarily connected) real Lie group,  $\pi:P\to M$ a principal $H$-bundle with $\dim(H)=\dim(M)$, $V\subset T_P$ its vertical subbundle and $\alpha\in A^1(P,\hg)$ a tensorial 1-form of type $\Ad$  which is admissible, i.e. for any $\xi\in P$, the induced  map $\bar\alpha_\xi:T_\xi P/V_\xi\to \hg$    is an isomorphism (see section \ref{J-alpha-A-section}). Let also $A$ be a connection on $P$. The pair $(\alpha,A)$ defines an almost complex structure $J^\alpha_A$ on $P$ which intertwines the subundles $A$ and $V$. By \cite[Theorem 1.1]{Ze}, $J^\alpha_A$  is integrable if and only $(\alpha,A)$ is a solution of a  gauge-invariant non-linear first order differential system.

Let $g:\hg\times\hg\to\R$ be a non-degenerate $\Ad$-invariant symmetric bilinear form and  $g^\alpha_M$, $\g^\alpha_A$ the associated pseudo-Riemannian metrics on $M$, $P$ respectively. We also obtain a bilinear form $\omega^{\alpha,g}_A:T_P\times_P T_P\to \C$ which is of type (1,0) with respect to $J^\alpha_A$ and is holomorphic if $J^\alpha_A$ is integrable. The tensorial form $\alpha$ defines a bundle isomorphism $T_M\textmap{\simeq} \Ad(P)$, so it induces a Lie bracket $[\cdot,\cdot]_\alpha:T_M\times_M T_M\to T_M$ which makes  $T_M$ a Lie algebra bundle.

Assume that the pair $(\alpha,A)$ satisfies the gauge invariant equations which characterise the integrability of  $J^\alpha_A$  (see \cite[Theorem 1.1]{Ze} and Theorem \ref{mainZe} in this article). Our main result is Theorem \ref{main} which establishes a correspondence -- similar to Theorems \ref{Th2-intro}, \ref{Th3-intro} -- between space-like $\omega^{\alpha,g}_A$-isotropic holomorphic immersions $f:Y\to P$ and space-like conformal immersions $\varphi:Y\to M$ whose mean curvature vector field is given by the simple formula 
\begin{equation}\label{Hg-general-formula-intro} 
\vec{\Hg}_\varphi= \varphi^*([\cdot,\cdot]_\alpha)/\vol_\varphi.	
\end{equation}
In this formula the Lie bracket $[\cdot,\cdot]_\alpha$ is regarded as an element of $A^2(M,T_M)$, $\varphi^*([\cdot,\cdot]_\alpha)\in A^2(Y,\varphi^*(T_M))$ stands for its pull-back via $\varphi$ and $\vol_\varphi$ stands for the volume form associated with the canonical orientation of $Y$ and the metric $\varphi^*(g^\alpha_M)$. Therefore we have a Bryant type correspondence for any 4-tuple $(P\textmap{\pi}M,\alpha,A,g)$, where $\alpha$ is an admissible tensorial 1-form of type $\Ad$ on $P$ and $A$ is a connection on $P$ such that $(\alpha,A)$ satisfies the above mentioned integrability equations, and $g:\hg\times\hg\to\R$ is a non-degenerate $\Ad$-invariant symmetric bilinear form.

A large class of examples of such 4-tuples is obtained as follows: let $G$ be a connected complex Lie group, and let $H\subset G$ be a real form of $G$, i.e. a (non-necessarily connected) closed subgroup whose Lie algebra $\hg$ is a real form of $\g$. The  canonical left-invariant $\g$-valued 1-form  $\eta\in A^1(G,\g)$  on $G$ decomposes as $\eta=\theta-i\alpha$, where $\theta=\Re(\eta)$, $\alpha=-\Im(\eta)$ with respect to the real structure $\hg$ of $\g$. It is easy to see that $\alpha$ is an admissible tensorial 1-form of type $\Ad$ on $G$ , $\theta$ is the connection form of a left-invariant connection $A$ on $G$ (regarded as principal bundle over $M\coloneq G/H$), and $J^\alpha_A$ coincides with the canonical almost complex structure of $G$, so is integrable. The pseudo-Riemannian  metric $g^\alpha_M$ coincides with the canonical pseudo-Riemannian  metric $g_M$ induced by $g$ via $\alpha$, and   the symmetric form $\omega^{\alpha,g}_A$ associated with   a non-degenerate $\Ad_H$-invariant symmetric bilinear form  $g:\hg\times\hg\to\R$ is just   the unique left-invariant form $\omega^g_H$ which coincides at the unit element $e\in G$ with the $\C$-bilinear extension  $g^\C:\g\times\g\to\C$ of $g$ (see section \ref{RealForms-subsection} for details). The obtained Lie bracket $[\cdot,\cdot]_\alpha$ on the tangent bundle $T_M$ is determined by $H$ and will be denoted by $[\cdot,\cdot]_H$.

Therefore we obtain a Bryant type correspondence, stated in Theorem \ref{main-real-forms}, for any triple $(G,H,g)$ consisting of a complex Lie group $G$, a real form $H$ of $G$ and a non-degenerate $\Ad_H$-invariant symmetric bilinear form $g$ on $\hg$.    

More generally, let $\Gamma\subset G$ be a discrete subgroup acting freely and properly discontinuously from the left on $M=G/H$. We obtain a principal $H$-bundle 
$$\pi_\Gamma:\Gamma\backslash G\to \Gamma\backslash G/H=\Gamma\backslash M.$$
The objects  $\alpha$, $A$, $\omega^g_H$ descend from $G$ to $\Gamma\backslash G$. Similarly,   the pseudo-Riemannian metric $g_M$ and the Lie bracket $[\cdot,\cdot]_H:T_M\times_M T_M\to T_M$ descend to $\Gamma\backslash M$.   The general Theorem \ref{main} gives a Bryant type correspondence for any 4-tuple $(G,H,g,\Gamma)$ as above (see section \ref{Bryant-for-Gamma-section}).\\

The article is structured as follows: in section \ref{prelim-section} we present briefly the basic notions, the formalism and the notations used in the article. In section \ref{J-alpha-A-section} we explain our principal bundle formalism, in particular we introduce the almost complex structure $J^\alpha_A$ mentioned above. Section \ref{main-th-section} is dedicated to the main theorem, to its corollaries (Corollaries \ref{main-coro}, \ref{scalar-mean-curv-coro}) and to its proof which is explained in detail in section \ref{ProofSection}. In section \ref{Weierstrass-repr-section}, inspired by the classical Theorems \ref{Th2-intro}, \ref{Th3-intro} and by the terminology used in \cite[Section 6.1]{BDHH}, we introduce the concept (simple) {\it Weierstrass representation} of a space-like conformal immersion $\varphi:Y\to M$ whose mean curvature vector field is given by formula (\ref{Hg-general-formula-intro}). A Weierstrass representation of $\varphi$ is a commutative diagram
$$
 \begin{tikzcd}
 \tilde Y \ar[r, "\tilde f"] \ar[d, "c"'] & P\ar[d, "\pi"]\\
 Y\ar[r, "\varphi"]& M	
 \end{tikzcd}, \eqno{(W)}
 $$
 where $c$ is a covering map of Riemann surfaces and $\tilde f$ is holomorphic. A simple Weierstrass representation of $\varphi$ is just a holomorphic lift $  f:Y\to P$ of $\varphi$. For any (simple)  Weierstrass representation of $\varphi$, the map $\tilde f$ (respectively $f$) is a space-like $\omega^{\alpha,g}_A$-isotropic holomorphic immersion.   The second part of our main theorem can be reformulated as follows: Any  space-like conformal immersion $\varphi:Y\to M$ from a (simply) connected Riemann surface and whose mean curvature vector field is given by formula (\ref{Hg-general-formula-intro}) admits a (simple) Weierstrass representation.

 Section \ref{RealForms-section} is dedicated to the generalised  Bryant  correspondences for principal bundles of the form $\pi_H:G\to G/H\eqcolon M$, where $H$ is real form of $G$. The main result (Theorem \ref{main-real-forms}) is explained in section \ref{RealForms-subsection} whereas section \ref{Examples-subsection} is dedicated to examples of this type. We mention here briefly Example \ref{Example-reductive} which concerns  the case when $G$ is a reductive complex Lie group. In this case one  considers     a Weyl involution $\theta$ of $G$, and   a Cartan involution $\tau$  which commutes with $\theta$. Consider the compact real form   $K\coloneq G^\tau$  and the split real form   $K'\coloneq G^{\theta\tau}$ of $G$. Let  $g$  be inner product on $\kg$ which is $\Ad_K$-invariant  and $\theta$-invariant,  let $g^\C:\g\times\g\to \C$  be its $\C$-bilinear extension and $g'$ the restriction $g'$ of $g^\C$ to $\kg'\times\kg'$, which is an $\R$-valued, non-degenerate symmetric bilinear form. We obtain two triples $(G,K,g)$, $(G,K',g')$ as above; the  holomorphy and the isotropy conditions intervening in the  associated Bryant type correspondences coincide. The first  quotient $G/K$ is  a Riemannian manifold, whereas $G/K'$ is a pseudo-Riemannian with signature $-\rk(G)$.

 We will look in more detail at the case $G=\GL(n,\C)$, $K=\U(n)$, $K'=\GL(n,\R)$, $g(a,b)=-\Tr(ab)$. The quotients  $G/K$,  $G/K'$ have interesting geometric interpretations: the former can be identified with the manifold of Hermitian inner products,  and the latter  with the manifold of real structures on  $\C^n$. We compute explicitly the Lie  brackets  $[\cdot,\cdot]_K$, $[\cdot,\cdot]_{K'}$ on the tangent bundles of these manifolds, so we have  an explicit formula for the mean curvature vector field of the conformal immersions which intervene in each of the two Bryant type correspondences. We conclude this section with an infinite dimensional example in which the role of the homogeneous manifold $M$ is played by the space ${\cal M}^E$ of Hermitian metrics on a complex vector bundle over a compact oriented Riemannian manifold. 
 
 In section \ref{Bryant-for-Gamma-section} we discuss the generalised  Bryant correspondence for bundles of the form  $\pi_\Gamma: P_\Gamma\coloneq \Gamma\backslash G\to \Gamma\backslash M\eqcolon M_\Gamma$ , where $\Gamma\subset G$ a discrete group of $G$ acting freely and properly discontinuously on $G/H$.  We consider in more detail the classical case when $G=\SL(2,\C)$, $H=\SU(2)$ and $\Gamma$ is a lift to $\SL(2,\C)$ of a torsion free Kleinian subgroup $\Gamma_0\subset  \PSL(2,\C)$.
 
 As noticed in \cite[Section 6.1]{BDHH} the existence of a simple Weierstrass representation of a CMC-1 conformal immersion $\varphi:Y\to M_\Gamma$  is especially interesting when both the Riemann surface $Y$ and the quotient $\Gamma\backslash\SL(2,\C)$ (or, equivalently, $\Gamma\backslash \H^3$) are compact.
 If a simple Weierstrass representation of a CMC-1 conformal embedding $\varphi:Y\hookrightarrow\Gamma\backslash\H^3$ exists, then one obtains an embedding $f:Y\hookrightarrow \Gamma\backslash \SL(2,\C)$ which is not only holomorphic, but also  isotropic with respect to the  symmetric  form  on $P_\Gamma$ induced  by the Killing form of $\sl(2,\C)$. The existence of such an embedding would be a substantial strengthening of the main result of \cite {BDHH} which states that certain compact quotients of the form  $\Gamma\backslash \SL(2,\C)$ do admit higher genus holomorphic curves, so gives a positive  answer of an old problem  formulated  by Winkelmann \cite{Wi}.

\section{Preliminaries and notations} \label{prelim-section}

\subsection{Connections and exterior covariant differrentiation}\label{connections-section}

Let $M$ be a differentiable manifold, $H$ be a Lie group,  $P\textmap{\pi} M$ be a principal $H$-bundle,  $T_P\textmap{p} P$ its tangent bundle, 
$$T_P\supset V\textmap{p_V} P$$
 its vertical subbundle. For any $\xi\in P$ we have a standard isomorphism $\#_\xi:\hg\textmap{\simeq} V_\xi$ induced by the infinitesimal action of $\hg$ on $P$. The family $(\#_\xi^{-1})_{\xi\in P}$ defines a smooth map $\vartheta:V\to\hg$ whose restrictions to the fibres are isomorphisms.

By a connection on $P$  we mean a  $H$-invariant  distribution $A\subset T_P$ which defines a complement subbundle of $V$ in the tangent bundle $T_P$ (see \cite[section II.1]{KN}). We will denote by ${\cal A}(P)$ the set of connections on $P$. 
The connection form of a connection $A\in {\cal A}(P)$ is the unique $\hg$-valued 1-form $\theta$ on $P$ which vanishes on $A$ and coincides with $\vartheta$ on $V$.  The connection form is a pseudo-tensorial form of type $\Ad$ (see \cite[section II.5]{KN}).  The curvature $\Omega_A$ is given by the formula $\Omega_A=d\theta+\frac{1}{2}[\theta\wedge \theta]$; it is a tensorial 2-form of type $\Ad$.

Let $\rho:H\to \GL(F)$ be a linear representation of $H$ on a finite dimensional real vector space $F$, and let $E^\rho_P\coloneq P\times_\rho F$ be the associated vector bundle.  

We will denote by $A^k_\rho(P,F)$ the space of tensorial $F$-valued forms of type $\rho$ on $P$ (see again \cite[section II.5]{KN})). Recall that for any $k\in \N$ we have a canonical isomorphism
\begin{equation}\label{StandardIdent}
A^k_\rho(P,F) \textmap{\simeq}A^k(M,E^\rho_P),	
\end{equation}
which identifies the space of tensorial $F$-valued forms on $P$ with the space of $E^\rho_P$-valued $k$-forms on $M$ (see \cite[Proposition 3.2.9]{Te}, \cite[Example II.5.2]{KN}). For a tensorial form $\alpha\in A^k_\rho(P,F)$, we will denote by $\tilde\alpha$  the corresponding element of $A^k(M,E^\rho_P)$. In particular, for a connection $A\in {\cal A}(P)$, we put
$$
F_A\coloneq \tilde\Omega_A\in A^2(\Ad(P)),
$$
where $\Ad(P)\coloneq E^\Ad_P$ is the vector bundle associated with $P$ and the adjoint representation $\Ad:H\to \GL(\hg)$ of $H$ on its Lie algebra.

Let $A\in {\cal A}(P)$ and  let 
$$D_A^\rho:A^k_\rho(P,F)\to A^{k+1}_\rho(P,F)$$
 be associated the exterior covariant differrentiation \cite[section II.5]{KN}. The set ${\cal A}(P)$ of connections on $P$ has a natural structure of an affine space with model vector space $A^1_\Ad(P,\hg)$ and translations in this affine space change the curvature as follows:   
\begin{equation}\label{Omega(A+alpha)}
\Omega_{A+\alpha}=\Omega_A+D_A\alpha+\frac{1}{2}[\alpha\wedge\alpha]	.
\end{equation}

 Recall that a connection $A\in {\cal A}(P)$ defines a linear connection $\nabla^\rho_A$ on the associated vector bundle $E^\rho_P$, and, via the  identifications (\ref{StandardIdent}), $D_A^\rho$ corresponds to the de Rham operator $d_A^\rho:A^k(M,E^\rho_P)\to A^{k+1}(M,E^\rho_P)$ associated with $\nabla^\rho_A$ (see \cite[Remarque 3.2.11]{Te}). In the special case   $\rho=\Ad$, we will write $D_A$, $d_A$ instead of  $D_A^\Ad$, $d_A^\Ad$ to save on notations.
 
 A point $\xi\in P$ defines 
 \begin{itemize}
 \item An $H$-equivariant diffeomorphism $l_\xi:H\textmap{\simeq} P_{\pi(x)}$ given by $h\mapsto \xi h$.
 \item 	A linear isomorphism $l_\xi^\rho: F\textmap{\simeq}E^P_{\rho,\pi(\xi)}$ given by $u\mapsto [(\xi,u)]$.
 \end{itemize}
 We will use the notations
 \begin{equation}\label{isomorph-with-xi}
 l^\xi\coloneq (l_\xi)^{-1}: P_{\pi(x)}\to H,\ 	l^\xi_\rho\coloneq (l_\xi^\rho)^{-1}:E^P_{\rho,\pi(\xi)}\to F
 \end{equation}
for the inverse isomorphisms. Similarly, a local section $\tau\in \Gamma(U,P)$ defines a smooth map
$$l_\rho^\tau: E^P_{\rho|U}\textmap{\simeq} F$$
(whose restrictions to the fibres are isomorphisms) which yields vector space isomorphisms (denoted by the same symbol to save on notations)
\begin{equation}\label{isom-l-tau-rho}
l^\tau_\rho:  A^k(U,E^\rho_P)\textmap{\simeq} A^k(U,F).
\end{equation}
Note the  identity
\begin{equation}\label{obv-id}
l^\tau_\rho(\tilde\alpha)=\tau^*(\alpha) \ \forall \alpha\in A^k_\rho(P,F).	
\end{equation}
Via the isomorphisms (\ref{isom-l-tau-rho}), the de Rham operator  $d_A^\rho$ can be written as $d+[\tau^*(\theta) \wedge\cdot]_\rho$, where $[\cdot,\cdot]_\rho:\hg\times F\to F$ is the bilinear map associated with the Lie algebra morphism $\rho_e:\hg\to \End(F)$. More precisely we have

\begin{equation}\label{dA-with-tau}
l^\tau_\rho\circ d_A^\rho=(d+[\tau^*(\theta)\wedge\cdot]_\rho)\circ l^\tau_\rho.	
\end{equation}

We will need the following definition: Let $E\to M$ be real vector bundle on $M$,  and  $J$ be an almost complex structure on $M$, and let $\lambda\in A^1(M,E)$ be an $E$-valued 1-form. We define the 1-form $J\lambda \in A^1(M,E)$ by
\begin{equation}\label{Def-Jlambda}
J\lambda\coloneq \lambda\circ J.	
\end{equation}

 The correspondence $\lambda\mapsto J\lambda$ obviously commutes with composition from the left by vector bundle isomorphisms.

\subsection{Isotropic maps}
\label{isotropic-section}

Let $X$ be a differentiable manifold, $W$ a real vector space, and $\omega:T_X\times_X T_X\to W$ a smooth map whose restriction the fibres is symmetric and  bilinear.

\begin{dt} \label{isotropy-def}
A differentiable map $f:Y\to X$ will be called $\omega$-isotropic ($\omega$-null) if for any $y\in Y$ and any $v\in T_yY$ we have $\omega(f_*(v), f_*(v))=0$. 	
\end{dt}

Now let $J$ be an almost complex structure on $X$, and let $\omega:T_X\times_X T_X\to \C$ be a smooth, symmetric map which is $\C$-bilinear with respect to $J$. Therefore, for any $x\in X$ we have a symmetric  form $\omega_x:T_xX\times T_xX\to \C$ which is $\C$-bilinear with respect to $J_x$. It follows that the isotropic (null) cone $C_x^\omega\subset T_xX$ of $\omega_x$ is $J_x$-invariant. In this case a differentiable map $f:Y\to X$ is $\omega$-isotropic if and only if for any $y\in Y$ and any $v\in T_yY$ we have $f_*(v)\in C^\omega_x$.

We will need this condition in the special case when $J$ is integrable, $\omega$ is holomorphic, $Y$ is a Riemann surface and $f:Y\to X$ is a holomorphic map. 

\begin{re}\label{isotropy-cond}
Let $J_X$ be an integrable almost complex structure on $X$, and let
$$\omega:T_X\times_X T_X\to \C$$
 be a   map  whose restriction to the fibres is symmetric and $\C$-bilinear and which is holomorphic with respect to $J_X$. Let $(Y,J)$ be  a Riemann surface and  $f:Y\to X$ a holomorphic map with respect to $(J,J_X)$. 	Then $f^*(\omega)$ can be regarded as a holomorphic section of the holomorphic line bundle ${\cal K}_Y^{\otimes 2}$ and $f$ is $\omega$-isotropic if and only if this section vanishes.
\end{re}

In particular, when $Y$ is closed, then the obstruction to the $\omega$-isotropy of a holomorphic map $f:Y\to X$ belongs to the finite dimensional complex vector space $H^0(Y,{\cal K}_Y^{\otimes 2})$.

\subsection{Non-degenerate and space-like immersions in pseudo-Riemannian manifolds. The mean curvature vector field}\label{immersions-section}
 {\ }\vspace{-2mm}\\

Let $(X,g)$ be pseudo-Riemannian manifold. An immersion $f:Y\to X$ will be called non-degenerate (with respect to $g$) if $f^*(g)$ is non-degenerate at any point. Equivalently, $f$ is non-degenerate if, for any $y\in Y$, we have $f_{*y}(T_yY)\cap N^f_y=\{0\}$, where 
$$
N^f_y\coloneq f_{*y}(T_yY)^{\bot_g}\subset T_{f(y)}X
$$
 is the normal space of $f$ at $y$ with respect to $g_{f(y)}$. If $f$ is non-degenerate, then $f^*(g)$ is a pseudo-Riermannian metric on $Y$ and $f:(Y,f^*(g))\to (X,g)$ becomes an isometric immersion.

Suppose that $f$ is non-degenrate. For any $y\in Y$, $N^f_y$ is a complement of $f_{*y}(T_yY)$ in $T_{f(y)}X$ and we have a direct sum  decomposition 
\begin{equation}\label{DirectSum}
f^*(T_X)=T^f\oplus N^f,
\end{equation}
where   $T^f\subset f^*(T_X)$ is the image of the bundle monomorphism $T_Y\to f^*(T_X)$ defined by $f_*$, and  
$$N^f\coloneq\coprod_{y\in Y}N^f_y\subset f^*(T_X)$$
is the normal subbundle of $f$.   
%


Let $\nabla^g$ be the Levi-Civita connection of $g$ (see for instance \cite[section 1.2]{Ch1} for the pseudo-Riemannian case) and let $\nabla\coloneq f^*(\nabla^g)$ be its pull back to the bundle $f^*(T_X)$ over $Y$ (see \cite[p. 28]{Te}). Using the direct sum decomposition (\ref{DirectSum}) we can write 
$$
\nabla =\begin{pmatrix}
\nabla^f & -a^*\\
a &	\nabla^N
\end{pmatrix},
$$
where: 
\begin{itemize}
	\item $\nabla^f$ is the induced connection on $T^f$; this connection corresponds to the Levi-Civita connection of the pseudo-Riemannian manifold $(Y,f^*(g))$ via the obvious bundle isomorphism $T_Y\textmap{\simeq} T^f$,
	\item $\nabla^N$ is the induced connection on the normal bundle $N^f$,
	\item  $a\in A^1(Y,\Hom(T^f,N^f))$ is the second fundamental form of $\nabla$ with respect to the decomposition (\ref{DirectSum}), 
	\item for any $y\in Y$ and $v\in T_Y$,  $a^*(v)\in \Hom (N^f_y,T^f_y)$ is the adjoint of $a(v)$ with respect to the symmetric  non-degenerate  bilinear forms induced by $g$ on $T^f_y$ and $N^f_y$.\end{itemize}

It is well known (see for instance \cite[Proposition 2.1 p. 27]{Ch1}) that for any $y\in Y$ the bilinear form $h_y:T_yY\times T_yY\to N^f_y$ defined by 
$$
h_y(u,v)\coloneq a(u)(f_*(v))
$$
is symmetric, so can be viewed as an element of $S^2(T_y^*Y)\otimes N^f_y$. The space  $S^2(T_y^*Y)$  can be identified in the usual way with the space $\End_{\rm s}(T_yY)$ of $f^*(g)_y$-symmetric (self-adjoint) endomorphisms of $T_yY$, so $h_y$ defines  an element $l_y\in\End_{\rm s}(T_yY)\otimes N^f_y$. 

We refer to  \cite[section 2.1]{Ch2} for the following definition which will play an important role in this article: 
\begin{dt}\label{Hg-def}The mean curvature vector of $f$ at $y$  is defined by 
$$
\vec\Hg_f(y)\coloneq \frac{1}{\dim(Y)} \Tr_{_{T_yY}}(l_y)\in N^f_y.
$$
The mean curvature vector field of $f$ is the section $\vec\Hg_f$ of $N^f$ defined by $y\mapsto \vec\Hg_f(y)$.
\end{dt}

Let $m\coloneq \dim(Y)$ and let $V_1$,\dots,$V_m$ be a vector fields on an open set  of $U\subset Y$ which give an $f^*(g)_y$-orthonormal basis at any point $y\in U$ in the sense that
\begin{equation}\label{g-orthon}
\langle V_i,V_j\rangle_{f^*(g)}=\delta_{ij}\varepsilon_i, \hbox{ where }\varepsilon_i\in\{\pm 1\}.
\end{equation}
Let $V_i^f$ be the section of $T^f|_U$  defined by the map $y\mapsto  f_{*y}(V_i(y))$.
Then
\begin{equation}\label{expl-formula-for-Hg}
\vec\Hg_f|_U =\frac{1}{m} \sum_{i=1}^m \varepsilon_i h_y(V_i,V_i)=\frac{1}{m} \sum_{i=1}^m \varepsilon_i a(V_i)(V_i^f)=\frac{1}{m} \sum_{i=1}^m \varepsilon_i(\nabla_{V_i}V_i^f)^{\bot_g},
\end{equation}
where $\bot_g$ stands for the projection on the second summand in (\ref{DirectSum}). 

\begin{dt}
Let $(X,g)$ be pseudo-Riemannian manifold. An immersion $f:Y\to X$ will be called space-like   if	$f^*(g)$ is positive definite.
\end{dt}

\begin{re} \label{Hg-for-space-like} Any space-like immersion is obviously non-degenerate. If $f$ is space-like,  we have $\varepsilon_i=1$   in the formulae (\ref{g-orthon}), (\ref{expl-formula-for-Hg}).
\end{re}

We will need the following natural

\begin{dt}
Let $(X,g)$ be pseudo-Riemannian manifold, $Y$  an oriented  manifold, and $f:Y\to X$ a space-like immersion. 	We will denote by $\vol_f$  the volume form of the oriented Riemannian manifold $(Y,f^*(g))$.
\end{dt}

\section{The almost complex structure  associated with an admissible tensorial 1-form  and a connection}
\label{J-alpha-A-section} 

\subsection{The almost complex structure \texorpdfstring{$J^\alpha_A$}{J}. The integrability condition}\label{J-alpha-A-subsection}

Let  $P\textmap{\pi} M$ be a principal $H$-bundle,  $T_P\textmap{p} P$ its tangent bundle, $T_P\supset V\textmap{p_V} P$ its vertical subbundle. The canonical  map $\vartheta:V\to\hg$ (see section \ref{connections-section}) defines a bundle isomorphism 
$$
(p_V,\vartheta):V\textmap{\simeq} P\times \hg,
$$
which can be regarded as a trivialisation of $V$ with standard fibre $\hg$.

\begin{dt}
A tensorial 1-form $\alpha\in A^1_\Ad(P,\hg)$	of type $\Ad$ on $P$   will be called admissible if, for any $\xi\in P$, the  map $\bar\alpha_\xi:T_\xi P/V_\xi\to \hg$ induced by $\alpha_\xi$ is an isomorphism.
\end{dt}

Note that  admissible forms can exist only if $\dim(M)=\dim(\hg)$. An admissible  tensorial 1-form $\alpha$ defines a smooth $\bar\alpha:T_P/V\to \hg$ whose restriction to the fibres are isomorphism. Denoting by $\bar p: T_P/V\to P$ the bundle projection, we obtain a bundle isomorphism 
$$(\bar p,\bar\alpha):T_P/V\textmap{\simeq} P\times\hg,$$
which can be regarded as a trivialisation of  the quotient bundle $T_P/V$ with standard fibre $\hg$. The composition $j^\alpha:(p_V,\vartheta)^{-1}\circ (\bar p,\bar\alpha):T_P/V\textmap{\simeq} V$ is  a $H$-invariant bundle isomorphism  which descends via $H$-factorisation to a bundle isomorphism $j^\alpha_M: T_M\to \Ad(P)$ on $M$, which is given explicitly by $T_xM\ni v\mapsto \tilde \alpha_x(v)$, where $\tilde\alpha\in A^1(M,\Ad(P)$ is the $\Ad(P)$-valued 1-form associated with $\alpha$ (see section \ref{connections-section}).	Via this isomorphism, $T_M$ becomes a bundle of Lie algebras; we will denote by 
$$[\cdot,\cdot]_\alpha:T_M\times_M T_M\to T_M$$
the fibrewise bilinear map  induced via  $j^\alpha_M$  by the canonical fibrewise bilinear map $[\cdot,\cdot]:\Ad(P)\times_M \Ad(P)\to \Ad(P)$ defined by the Lie algebra bracket on $\hg$.

Now let  $A\subset T_P$  be a connection on $P$. Identifying $A$ with $T_P/V$ in the obvious way, we obtain 
 \begin{itemize}
 \item a smooth map $\alpha_A:A\to \hg$ induced by $\bar\alpha$ whose restriction to the fibres are isomorphisms,
 \item  a bundle isomorphism $(p_A,\alpha_A):A\to P\times \hg$,	i.e. a trivialisation of $A$ with standard fibre $\hg$,
 \item a bundle isomorphism 
$j^\alpha_A=(p_V,\vartheta)^{-1}\circ (p_A,\alpha_A):A\textmap{\simeq} V$
 induced by $j^\alpha$ identifying the horizontal and the vertical subbundles.
 \end{itemize}

Recall that the canonical map $\vartheta:V\to\hg$  (whose restrictions to the fibres are isomorphisms) coincides with the restriction $\theta|_V$ of the connection form $\theta$ of $A$ \cite[section II.1]{KN}.

Denoting by  $q:T_M\to M$, $\qg: \Ad(P)\to M$  the  obvious bundle projections, we see that the data of a pair $(\alpha,A)$ consisting of an admissible 1-form and a connection $A\subset T_P$ gives the following commutative diagram in which the blue (red) arrows denote bundle isomorphisms over $P$ (respectively $M$), and the purple arrows denote fibrewise isomorphic, $\pi$-covering bundle morphisms:
\vspace{-6mm}
\begin{equation}\label{diag}\hspace*{-18mm}	 
\begin{tikzcd}[row sep=8ex, column sep=1ex]
&\color{blue} P\times\hg  \ar[lddddd, magenta, bend right=90] \ar[rddddd,  magenta, bend left=90] \ar[ddd,  "p_1" description,   bend right=10] &\\
\color{blue}  A\ar[d, blue, hook'] \ar[rr, blue,  crossing over,  "\ \ \ j^\alpha_A\simeq"'] \ar[ru, blue, bend left=10,  "\simeq(p_A{,}\alpha_A)" description  ]  \ar[dddd, magenta,  bend right=25]   & &  \color{blue} V\ar[d, blue, hook'] \ar[lu, bend right=10, blue,  "(p_V{,}\vartheta)\simeq" description]  \ar[dddd, magenta,  bend left=25] 	\\
\color{blue} T_P\ar[rr, blue, crossing over, "\ \ \ J^\alpha_A\simeq", ] \ar[ruu, blue, two heads, crossing over,  "{(p,\alpha)}" description, near start] \ar[rd, "p"'] && \color{blue}T_P \ar[ld, "p"] \ar[luu, blue, two heads, crossing over,   "{(p,\theta)}" description, near start] \\
 & P \ar[d, brown, two heads, "\pi"] & \\
  & M=P/H & \\
\color{red} A/H=T_M \ar[ru, "q"] \ar[rr, red, "j^\alpha_M\simeq"]   & &{\color{red} \Ad(P)=V/K}\,. \ar[lu, "\qg"']   
\end{tikzcd}
\end{equation}

We associate  with such a pair $(\alpha,A)$ the almost complex structure $J^\alpha_A$ on $P$ given by the formula
$$J^\alpha_A(u+v)= -(j^\alpha_{A,\xi})^{-1}(v)+j^\alpha_{A,\xi}(u)
$$
for any $(u,v)\in A_\xi\times V_\xi$. 

We denote by $\nabla^\Ad_A$ the linear connexion defined by $A$ on the associated bundle $\Ad(P)=P\times_\Ad\hg$ (see section \ref{connections-section}), and by $\nabla^\alpha_A$ the linear connection on $T_M$ induced by $\nabla^\Ad_A$ via the isomorphism $j^\alpha_M$.

  Recall that the space ${\cal A}(P)$ of connections on $P$ is an affine space with model vector space the space $A^1_\Ad(P,\hg)$ of tensorial 1-forms of type $\Ad$.

We refer to \cite{Ze} for the following   result which will play an important role in our proofs:
\begin{thry} \label{mainZe} \cite[Theorem 1.1, Proposition 2.1, Proposition 2.2]{Ze}
Let $\alpha$ be an admissible tensorial 1-form of type $\Ad$ and $A$ a connection on $P$.  Then  
\begin{enumerate}	
\item $J^\alpha_A$ is integrable if and only if the pair $(\alpha,A)\in A^1_\Ad(P,\hg)\times {\cal A}(P) $ is a solution of the differential system	
\begin{equation}\label{ZSys}
\left\{\begin{array}{ccc}
        D_A \alpha & = & 0 \\
        \Omega_A & = & \frac{1}{2} \ [\alpha \wedge \alpha],
\end{array}\right.	 
\end{equation}
where $\Omega_A\in A^2_\Ad(P,\hg)$ denotes the curvature form of $A$.
\item Suppose that $J^\alpha_A$ is integrable. Then the induced connection   $\nabla^\alpha_A$ on $T_M$ is torsion free and its curvature is given  by formula
$$
R(u,v)w  = [[u,v]_\alpha, w]_\alpha.
$$

 \end{enumerate}
\end{thry}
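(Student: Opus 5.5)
The plan is to encode $J^\alpha_A$ through a single complexified form and reduce integrability to a computation of its exterior derivative. Let $\hg^\C=\hg\otimes\C$ and set
$$\eta\coloneq \theta-i\alpha\in A^1(P,\hg^\C).$$
First I check how $\theta$ and $\alpha$ behave under $J=J^\alpha_A$, using the definition of $j^\alpha_A$ together with $\theta|_A=0$, $\theta|_V=\vartheta$ and $\alpha|_V=0$. For $u\in A$ we have $\theta(Ju)=\vartheta(j^\alpha_A u)=\alpha(u)$ and $\alpha(Ju)=0$. For $v\in V$ we have $\theta(Jv)=0$ and $\alpha(Jv)=-\alpha((j^\alpha_A)^{-1}v)=-\vartheta(v)=-\theta(v)$. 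Hence, in the notation (\ref{Def-Jlambda}),
$$J\theta=\alpha,\qquad J\alpha=-\theta,\qquad J\eta=i\eta .$$
Since $(\theta,\alpha)$ is fibrewise an isomorphism $T_\xi P\to\hg\oplus\hg$, the components of $\eta$ in a basis of $\hg$ form a global frame of $\Lambda^{1,0}$. I will then use the standard criterion: $J$ is integrable if and only if the Nijenhuis tensor vanishes, which holds if and only if $(d\eta)^{0,2}=0$.

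Next I compute $d\eta$. From the structure equation, $d\theta=\Omega_A-\frac12[\theta\wedge\theta]$. Since $\alpha$ is tensorial of type $\Ad$, $d\alpha=D_A\alpha-[\theta\wedge\alpha]$. Expanding $[\eta\wedge\eta]=[\theta\wedge\theta]-2i[\theta\wedge\alpha]-[\alpha\wedge\alpha]$ and substituting should give the key identity
$$d\eta=\Big(\Omega_A-\tfrac12[\alpha\wedge\alpha]-iD_A\alpha\Big)-\tfrac12[\eta\wedge\eta].$$
The last term is of type $(2,0)$. The bracketed term $\beta$ is a horizontal $\hg^\C$-valued 2-form, so $\beta=\sum b_{jk}\,\alpha^j\wedge\alpha^k$ with $b_{jk}$ skew-symmetric. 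On $A$ we have $\alpha=\frac{i}{2}(\eta-\bar\eta)$, so
$$\beta^{0,2}=-\tfrac14\sum b_{jk}\,\bar\eta^j\wedge\bar\eta^k,$$
which vanishes if and only if $\beta=0$. Taking real and imaginary parts of $\beta=0$ gives exactly the system (\ref{ZSys}), which proves (1). The only delicate points are the sign conventions, for instance $\frac12[\alpha\wedge\alpha](u,v)=[\alpha(u),\alpha(v)]$, and the formula for $d\alpha$; I expect this bookkeeping to be the main obstacle, although it is routine once the conventions of \cite{KN} are fixed.

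For (2), transport everything by $j^\alpha_M$. Under this isomorphism the tautological form $\id_{T_M}\in A^1(M,T_M)$ corresponds to $\tilde\alpha\in A^1(M,\Ad(P))$, and $\nabla^\alpha_A$ corresponds to $\nabla^\Ad_A$. The torsion of $\nabla^\alpha_A$ is $d_{\nabla^\alpha_A}(\id_{T_M})$, so it corresponds to $d_A\tilde\alpha=\widetilde{D_A\alpha}=0$, using that $D_A$ corresponds to $d_A$ under (\ref{StandardIdent}); hence $\nabla^\alpha_A$ is torsion free. The curvature of $\nabla^\Ad_A$ is $\ad(F_A)$, and by the second equation of (\ref{ZSys}) we have $F_A(u,v)=[\tilde\alpha(u),\tilde\alpha(v)]$. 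Therefore
$$R(u,v)w=(j^\alpha_M)^{-1}\big[[\tilde\alpha(u),\tilde\alpha(v)],\tilde\alpha(w)\big]=[[u,v]_\alpha,w]_\alpha,$$
which is the claimed curvature formula.
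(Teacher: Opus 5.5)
The paper does not prove this statement; it imports it from \cite{Ze} (Theorem 1.1 and Propositions 2.1, 2.2 there), so there is no in-paper argument to compare with. Your proof is correct and self-contained.

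\begin{itemize}
\item The identities $J\theta=\alpha$ and $J\alpha=-\theta$ follow, as you say, from $\theta\circ j^\alpha_A=\alpha_A$ and the horizontality of $\alpha$.
\item The structure equation together with $D_A\alpha=d\alpha+[\theta\wedge\alpha]$ does give $d\eta=\beta-\frac12[\eta\wedge\eta]$, with $\beta=\Omega_A-\frac12[\alpha\wedge\alpha]-iD_A\alpha$.
\item Since $\beta$ is horizontal, $\beta^{0,2}=-\frac14\sum b_{jk}\bar\eta^j\wedge\bar\eta^k$ vanishes only if $\beta=0$. Hence $(d\eta)^{0,2}=0$ is equivalent to (\ref{ZSys}).
\item Part (2) is also right. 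The torsion is $d_{\nabla^\alpha_A}(\id_{T_M})$, which $j^\alpha_M$ carries to $d_A\tilde\alpha=\widetilde{D_A\alpha}=0$. The curvature $\ad(F_A)$, with $F_A(u,v)=[\tilde\alpha(u),\tilde\alpha(v)]$, transports to $[[u,v]_\alpha,w]_\alpha$.
\end{itemize}

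A cosmetic point: $\alpha=\frac{i}{2}(\eta-\bar\eta)$ holds on all of $T_P$, not only on $A$.

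What your route adds relative to the paper's own treatment: your key identity shows that $\beta$ is the curvature of the $\hg^\C$-valued form $\eta=\theta-i\alpha$. Formally, this is (\ref{Omega(A+alpha)}) with $-i\alpha$ in place of $\alpha$. So (\ref{ZSys}) is exactly the Maurer--Cartan equation $d\eta+\frac12[\eta\wedge\eta]=0$. Consequently, when $J^\alpha_A$ is integrable, $d\eta$ is of type $(2,0)$, hence $\bar\partial\eta=0$. This gives Lemma \ref{eta-holom}~(3) in one line, bypassing the paper's argument via the holomorphic trivialisation of ${\cal T}_P$ by fundamental vector fields. It also makes transparent the link with section \ref{RealForms-subsection}, where $\eta$ is literally the Maurer--Cartan form of $G$.
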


The following remarks justifies the title of this article:

\begin{re}
The integrability condition  (\ref{ZSys}) is a non-linear quasi-linear system of first order PDE's for the unknown $(\alpha,A)$. 	This system is gauge invariant, i.e. the space of its solutions is invariant with respect to obvious action of the automorphism group (the gauge group) of the bundle $P$ on $A^1_\Ad(P,\hg)\times {\cal A}(P)$. Geometric interpretations of the moduli space of solutions have been described in \cite{AlCTe} in the special cases $H=\SU(2)$, $H=\SO(3)$.
\end{re}

We will need the following

 \begin{lm}\label{eta-holom} et $\alpha\in A^1_\Ad(P,\hg)$ be an admissible tensorial 1-form of type $\Ad$, $A$ a connection on $P$, and $\theta\in A^1(P,\hg)$ its connection form. The $\hg^\C$-valued 1-form on $P$ given by
$$\eta\coloneq \theta-i \alpha\in A^1(P,\hg^\C).$$
has the following properties:
\begin{enumerate}
\item It satisfies the identity
$\eta\circ J^\alpha_A=i\eta$, i.e.  is of type (1,0) with respect to   $J^\alpha_A$.  
\item It defines $\C$-linear isomorphisms 	$(T_\xi P,J^\alpha_{A,\xi})\textmap{\simeq} \hg^\C$ for $\xi\in P$.
\item If $J^\alpha_A$ is integrable, then $\eta$ is a holomorphic $\hg^\C$-valued form of type $(1,0)$ on $P$.
\end{enumerate}
\end{lm}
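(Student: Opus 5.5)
Parts (1) and (2) are pointwise linear algebra at a fixed $\xi\in P$; part (3) then follows from (1) and the integrability system (\ref{ZSys}) of Theorem \ref{mainZe}.

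For (1), write $w=u+v$ with $u\in A_\xi$ and $v\in V_\xi$. Since $\theta$ vanishes on $A$ and equals $\vartheta$ on $V$, we have $\theta(w)=\vartheta(v)$. Since $\alpha$ is tensorial, it vanishes on $V$, and by definition $\alpha(u)=\alpha_A(u)$, so $\alpha(w)=\alpha_A(u)$. By construction $j^\alpha_A=(p_V,\vartheta)^{-1}\circ(p_A,\alpha_A)$, hence $\vartheta(j^\alpha_A(u))=\alpha_A(u)$ and $\alpha_A((j^\alpha_A)^{-1}(v))=\vartheta(v)$. Now $J^\alpha_A w=-(j^\alpha_A)^{-1}(v)+j^\alpha_A(u)$ has horizontal part $-(j^\alpha_A)^{-1}(v)$ and vertical part $j^\alpha_A(u)$. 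Therefore $\theta(J^\alpha_A w)=\alpha_A(u)=\alpha(w)$ and $\alpha(J^\alpha_A w)=-\vartheta(v)=-\theta(w)$. It follows that $\eta(J^\alpha_A w)=\alpha(w)+i\theta(w)=i(\theta(w)-i\alpha(w))=i\eta(w)$.

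For (2), identity (1) says exactly that $\eta_\xi$ is $\C$-linear from $(T_\xi P,J^\alpha_{A,\xi})$ to $\hg^\C$. Both spaces have real dimension $2\dim\hg$, so it is enough to check injectivity. If $\eta(w)=0$, then $\theta(w)=0$ and $\alpha(w)=0$. The first gives $v=0$ because $\vartheta$ is an isomorphism on $V_\xi$; the second then gives $u=0$ because $\alpha_A$ is an isomorphism on $A_\xi$ (admissibility).

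For (3), assume $J^\alpha_A$ is integrable. By (1), $\eta$ is a $(1,0)$-form, so it is holomorphic if and only if $\bar\partial\eta=0$, i.e. if and only if $d\eta$ has no $(1,1)$-component. The main step is to show $d\eta$ is of type $(2,0)$. By Theorem \ref{mainZe}(1) the system (\ref{ZSys}) holds. On $P$ we have $D_A\alpha=d\alpha+[\theta\wedge\alpha]$ for tensorial $\alpha$, and $\Omega_A=d\theta+\frac12[\theta\wedge\theta]$. Substituting gives
$$d\eta=d\theta-i\,d\alpha=-\tfrac12[\theta\wedge\theta]+\tfrac12[\alpha\wedge\alpha]+i[\theta\wedge\alpha]=-\tfrac12[\eta\wedge\eta],$$
since $[\eta\wedge\eta]=[\theta\wedge\theta]-2i[\theta\wedge\alpha]-[\alpha\wedge\alpha]$. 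The wedge of two $(1,0)$-forms is of type $(2,0)$, so $\bar\partial\eta=(d\eta)^{1,1}=0$. Concretely, in a local holomorphic chart each component of $\eta$ in a basis of $\hg^\C$ is $\sum f_k dz_k$, and $\bar\partial\eta=0$ gives $\bar\partial f_k=0$.

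The only delicate points are the sign conventions, namely $J\lambda=\lambda\circ J$ and the sign in the bracket $[\alpha\wedge\alpha]$, and making sure the cross term $[\theta\wedge\alpha]$ matches the $D_A$ formula. I would verify these against the conventions of \cite{KN}.
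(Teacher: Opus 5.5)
Your proof is correct. Parts (1) and (2) follow the paper's own argument: the identity $\theta\circ j^\alpha_A=\alpha_A$, and the fact that $\eta_\xi$ maps $V_\xi$ onto $\hg$ and $A_\xi$ onto $i\hg$.

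For (3) you take a genuinely different route. The paper does not use (\ref{ZSys}) at all. Since $J^\alpha_A$ is $H$-invariant, right translations are biholomorphisms, so the fundamental fields $v^\#$ are holomorphic. The complexified map $a+ib\mapsto a^\#+J^\alpha_A b^\#$ then gives a global holomorphic frame $\mu$ of $\mathcal{T}_P$. Finally, $\eta$ coincides with the dual coframe $p_1\circ\mu^{-1}$, because both are $\C$-linear and agree on $V$, which spans $T_\xi P$ over $\C$ since $J^\alpha_A V=A$.

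You instead invoke the ``only if'' half of Theorem \ref{mainZe}(1) and rewrite (\ref{ZSys}) as the Maurer--Cartan equation $d\eta=-\frac12[\eta\wedge\eta]$, whose right-hand side is of type $(2,0)$. Your sign conventions match the paper's: expanding $\Omega_{A+\alpha}$ and comparing with (\ref{Omega(A+alpha)}) forces $D_A\alpha=d\alpha+[\theta\wedge\alpha]$, as in (\ref{dA-with-tau}).

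The trade-off is as follows. The paper's argument is self-contained and needs only integrability and $H$-invariance, whereas yours relies on the nontrivial result of \cite{Ze}. In exchange, your computation shows that (\ref{ZSys}) is exactly the real and imaginary parts of the Maurer--Cartan equation for $\eta$. Hence $\eta$ is a flat holomorphic $\hg^\C$-valued Maurer--Cartan form, so $P$ is locally modelled on a complex Lie group with Lie algebra $\hg^\C$ (compare section \ref{RealForms-subsection}). Together with (1) and (2), you also recover the ``if'' direction of Theorem \ref{mainZe}(1), since a global $(1,0)$-coframe whose differentials have no $(0,2)$-part forces integrability.
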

\begin{proof} 
 The definition of the bundle isomorphism $j^\alpha_A:A\to V$ gives the identity $\theta\circ j^\alpha_A=\alpha_A$, which proves (1). The second claim follows from the first taking into account that $\eta_\xi$ maps isomorphically $A_\xi$ onto $i\hg$ and $V_\xi$ onto $\hg$.
 
 For (3):  Taking into account the definition of $J^\alpha_A$, it follows that any right translation $R_h$ with $h\in H$ is a biholomorphism. Therefore the Lie algebra morphism $\hg\to {\cal X}(P)$ given by $v\mapsto v^\#$ takes values in the subalgebra $\Gamma(P, {\cal T}_P)$ of holomorphic vector fields	 of $P$. Here we denoted by ${\cal T}_P$ the holomorphic bundle on the complex manifold $(P,J^\alpha_A)$ obtained by endowing the real tangent bundle $T_P$ with the complex structure defined by $J^\alpha_A$ and its canonical holomorphic structure.

The universal property of the complexified Lie algebra yields a complex Lie algebra extension $\nu:\hg^\C\to \Gamma(P, {\cal T}_P)$, and (by definition of $J^\alpha_A$) this extension defines a global holomorphic trivialisation $\mu:\hg^\C\times P\textmap{\simeq}T_P$ of the holomorphic tangent bundle ${\cal T}_P$ given by $(u,\xi)\mapsto \nu(u)_\xi$. The   1-forms   $\eta$, $p_1\circ \mu^{-1}\in A^1(P,\hg^\C)$ are both of type $(1,0)$ and coincide on the subbundle $V\subset T_P$, so they coincide.   But $p_1\circ \mu^{-1}$ is holomorphic, because $\mu$ is holomorphic. 
\end{proof}

Triples of the form $(P\textmap{\pi}M,\alpha,A)$ where $\pi$ is a principal $H$-bundle, $\alpha$ is an admissible tensorial 1-form of type $\Ad$ and $A$ a connection on $P$ such that $J^\alpha_A$ is integrable (i.e. such that $(\alpha,A)$ solves the  equations (\ref{ZSys})) play an important role in this article. Results on the classification of such triples can be found in  \cite{AlC} and \cite{AlCTe}.

\subsection{The pseudo-Riemannian metric and the isotropic condition associated with a non-degenerate, \texorpdfstring{$\Ad_H$}{Ad}-invariant symmetric bilinear form} \label{section-for-g-alpha-M-omega}
\label{g-alpha-M-omega-section}

Let $g$ be a non-degenerate, $\Ad_H$-invariant symmetric bilinear form  on the Lie algebra $\hg$ and let $g_V$, $g_\Ad$   be the induced  pseudo-Riemannian structures on the vector bundles $V$, $\Ad(P)$.  We also  obtain a pseudo-Riemannian metric $g^\alpha_M$ on the tangent bundle $T_M$ (so on $M$) induced by $g_\Ad$ via $j^\alpha_M$. 
By the definition of $g^\alpha_M$ we have
\begin{equation}\label{g-compos-pi*}
g^\alpha_M(\pi_*(u),\pi_*(v))=g(\alpha(u),\alpha(v)).
\end{equation}
for any  $\xi\in P$ and $u$, $v\in T_\xi P$.

Let now $A$ be a connection on $P$. Using the isomorphism $j^\alpha_A$ we obtain a pseudo-Riemannian structure $g^\alpha_A$ on $A$. Note that $g_V$ lifts $g_\Ad$ and $g^\alpha_A$ lifts $g^\alpha_M$. We also obtain a product pseudo-Riemannian structure $\g^\alpha_A\coloneq g^\alpha_A\oplus g_V$ on $P$, which is  pseudo-Hermitian with respect to the almost complex structure $J^\alpha_A$ and is given by the explicit formula
\begin{equation}\label{formula-for-g-on-P}
\g^\alpha_A(u,v)=g(\alpha(u),\alpha(v))+g(\theta(u),\theta(v)).
\end{equation}

  Since $g$ is $\Ad$-invariant, it follows that $g_\Ad$ is $\nabla^\Ad_A$-parallel, so $g^\alpha_M$ will be $\nabla^\alpha_A$-parallel. Taking into account Theorem \ref{mainZe} (2) it follows that
  \begin{co}\label{coro-LC}
  Suppose that $J^\alpha_A$ is integrable.  Then $\nabla^\alpha_A$ coincides with the Levi-Civita connection of the pseudo-Riemannian metric $g^\alpha_M$, and, for any	  non-degenerate plane $\sigma=\langle u,v\rangle\subset T_xM$ (see \cite[p. 13]{Ch1}),  the sectional curvature $K(\sigma)$ is given by
$$K(\sigma)=-\frac{g^\alpha_M ([u,v],[u,v]) }{g^\alpha_M ( u,u)g^\alpha_M ( v,v)-g^\alpha_M (u,v)^2}\,.
$$
  \end{co}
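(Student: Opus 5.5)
The corollary follows from Theorem \ref{mainZe} (2) together with the fundamental theorem of pseudo-Riemannian geometry: the Levi-Civita connection is the unique linear connection on $T_M$ that is torsion free and metric. Both parts of the statement will be obtained by checking these properties for $\nabla^\alpha_A$ and then computing.

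\textbf{Step 1: $\nabla^\alpha_A$ is metric.} The bilinear form $g$ is $\Ad_H$-invariant. Hence $g_\Ad$ is parallel with respect to the connection $\nabla^\Ad_A$ induced by $A$ on $\Ad(P)=P\times_\Ad\hg$. To see this, work in a local section $\tau$: by (\ref{dA-with-tau}), $\nabla^\Ad_A=d+[\tau^*(\theta),\cdot]$. Differentiating $\Ad_H$-invariance gives $g([a,u],v)+g(u,[a,v])=0$ for all $a,u,v\in\hg$, so the extra term is $g$-skew. Transporting this statement through the bundle isomorphism $j^\alpha_M$ shows that $g^\alpha_M$ is $\nabla^\alpha_A$-parallel.

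\textbf{Step 2: identification with Levi-Civita.} By Theorem \ref{mainZe} (2), integrability of $J^\alpha_A$ implies that $\nabla^\alpha_A$ is torsion free. Combined with Step 1, uniqueness of the Levi-Civita connection gives $\nabla^\alpha_A=\nabla^{g^\alpha_M}$.

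\textbf{Step 3: sectional curvature.} Take a non-degenerate plane $\sigma=\langle u,v\rangle$. By definition,
$$K(\sigma)=\frac{g^\alpha_M(R(u,v)v,u)}{g^\alpha_M(u,u)g^\alpha_M(v,v)-g^\alpha_M(u,v)^2}.$$
The sign convention for $R$ must be the one under which Theorem \ref{mainZe} (2) holds. Substitute $R(u,v)v=[[u,v]_\alpha,v]_\alpha$ and use ad-invariance of $g^\alpha_M$ on each fibre of the Lie algebra bundle $(T_M,[\cdot,\cdot]_\alpha)$, which holds because $j^\alpha_M$ carries $g_\Ad$ to $g^\alpha_M$:
$$g([[u,v],v],u)=-g(v,[[u,v],u])=-g([u,[u,v]],v)\cdot(-1)\cdots$$
More directly, $g([w,v],u)=g(w,[v,u])=-g(w,[u,v])$ with $w=[u,v]$. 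This gives $g^\alpha_M(R(u,v)v,u)=-g^\alpha_M([u,v]_\alpha,[u,v]_\alpha)$, which is the claimed formula.

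The only delicate point is matching the sign convention for $R$ and for $K$ with the one used in \cite{Ch1} and \cite{Ze}. The computation must be carried out in that convention, which is the one producing the minus sign in the claimed formula.
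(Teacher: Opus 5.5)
Your proof is correct and follows the paper's argument. $\Ad_H$-invariance of $g$ makes $g^\alpha_M$ parallel for $\nabla^\alpha_A$, torsion-freeness from Theorem \ref{mainZe} (2) then identifies $\nabla^\alpha_A$ with the Levi-Civita connection, and $R(u,v)w=[[u,v]_\alpha,w]_\alpha$ combined with ad-invariance gives $g^\alpha_M(R(u,v)v,u)=-g^\alpha_M([u,v]_\alpha,[u,v]_\alpha)$. The sign point you flag is harmless: with the standard convention $R(u,v)=[\nabla_u,\nabla_v]-\nabla_{[u,v]}$, the curvature of $\nabla^\Ad_A$ is $[F_A,\cdot]$ with $F_A(u,v)=[\tilde\alpha(u),\tilde\alpha(v)]$, which is exactly the formula in Theorem \ref{mainZe} (2), so you should simply delete the abandoned displayed computation in Step 3.
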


Let $g^\C:\hg^\C\times\hg^\C\to\C $ be the $\C$-bilinear extension of $g$ and let
$$ \omega^{\alpha,g}_A:T_P\times_P T_P\to \C$$
be the symmetric $\R$-bilinear form defined by 
$$ \omega^{\alpha,g}_A(u,v)=g^\C(\eta(u),\eta(v)). 
$$
By Lemma \ref{eta-holom} it follows that
\begin{re}
The form $\omega^{\alpha,g}_A$ is $\C$-bilinear with respect to $J^\alpha_A$. In particular, for any $\xi\in P$, the isotropic (null) cone  $C^{\omega^{\alpha,g}_A}_\xi\subset T_\xi P$ is $J^\alpha_A$-invariant.

If $J^\alpha_A$ is integrable, then $\omega^{\alpha,g}_A$, regarded as a symmetric bilinear form on the holomorphic tangent bundle ${\cal T}_P$,  is holomorphic.  The isotropy criterion given by Remark \ref{isotropy-cond} applies.
\end{re}

\subsection{First properties}\label{first-prop-sect} Let $\pi:P\to M$ be a principal $H$-bundle $\alpha\in A^1_\Ad(P,\hg)$ such that for any $\xi\in P$ the map $A_\xi\to \hg$ induced by $\alpha$ is an isomorphism and the integrability conditions  (\ref{ZSys}) are satisfied. We denote by $\ag$ the bundle isomorphism $A\to P\times\hg$ induced by $\alpha$, and  by   the bundle isomorphism $V\to P\times\hg$ induced by the connection form $\theta$ of $A$.

 Let $Y$ be a differentiable manifold and $f:Y\to P$ be a smooth map. Put  $\alpha_f=f^*(\alpha)$, $\theta_f\coloneq f^*(\theta)$, $\eta_f=f^*(\eta)$.
 
 \begin{re}   $f$  is $\omega^{\alpha,g}_A$-isotropic   (see Definition \ref{isotropy-def}) if and only if for any $y\in Y$ and $u\in T_yY$ we have $g^\C(\eta_f(u),\eta_f(u))=0$, which is equivalent to the two equations
\begin{equation}\label{null}
\begin{split}
g(\alpha_f(u),\alpha_f(u)) & =g (\theta_f(u),\theta_f(u)), \\
g (\alpha_f(u),\theta_f(u))& =0.
\end{split}
\end{equation}
\end{re}

Now let $(Y,J)$ be Riemann surface. We need  explicit formulae characterising   the holomorphy condition for a map $f:Y\to P$.

\begin{re}
Let $U$, $V$ be  nowhere vanishing vector fields on $Y$ such that $V=JU$. The holomorphy condition  of $f$ becomes $f_*(V)=J^\alpha_A f_*(U)$, which, taking into account that $\eta$ is of type $(1,0)$ with respect to $J^\alpha_A$ by Lemma \ref{eta-holom}, can also be written as $\eta_f(V)=i\eta_f(U)$, or equivalently
\begin{equation}\label{HolCond}
\begin{split}
\alpha_f(V)&=-\theta_f(U),\\
\theta_f(V)&=\alpha_f(U).
 \end{split}
\end{equation}
\end{re}
Using these formulae, we obtain
\begin{re}\label{NullHolChar} Let $U$, $V$ be  nowhere vanishing vector fields on $Y$ such that $V=JU$. 
\begin{enumerate}
\item A holomorphic map $f:Y\to P$ is $\omega^{\alpha,g}_A$-isotropic if and only if
\begin{equation}\label{isotropic-for-hol}
\begin{split}
g  (\alpha_f(U),\alpha_f(U))&=g (\alpha_f(V),\alpha_f(V)),\\ g( \alpha_f(U),\alpha_f(V)) &=0.	
\end{split}
\end{equation}
\item \label{second-rem} A   holomorphic map  $f:Y\to P$ is a space-like $\omega^{\alpha,g}_A$-isotropic immersion if and only if the composition $\varphi\coloneq \pi\circ f:Y\to M$ is a space-like conformal immersion.	
\end{enumerate}
\end{re}

\begin{proof}
(1) Since $f$ is holomorphic and $Y$ is Riemann surface, it follows by Remark \ref{isotropy-cond} that $f$ is $\omega^{\alpha,g}_A$-isotropic if and only $g^\C(\eta_f(U),\eta_f(U))=0$, i.e. if and only formulae (\ref{null}) hold when both members are applied to $U$. Taking into account (\ref{HolCond}), the obtained conditions coincide with (\ref{isotropic-for-hol}).
\vspace{2mm}\\
(2)   
Let $f:Y\to P$  be a   holomorphic map.  $f$ is a space-like immersion if and only if the symmetric matrix
$$
\begin{pmatrix}
\g^\alpha_A(f_{*}(U),f_{*}(U))&\g^\alpha_A(f_{*y}(U),f_{*}(V))\\
\g^\alpha_A(f_{*}(V),f_{*y}(U))&\g^\alpha_A(f_{*}(V),f_{*}(V))
\end{pmatrix}$$
$$=\begin{pmatrix}
g(\alpha_f(U),\alpha_f(U))+g(\theta_f(U),\theta_f(U))&g(\alpha_f(U),\alpha_f(V))+g(\theta_f(U),\theta_f(V))\\
g(\alpha_f(V),\alpha_f(U))+g(\theta_f(V),\theta_f(U))&g(\alpha_f(V),\alpha_f(V))+g(\theta_f(V),\theta_f(V))
\end{pmatrix}
$$ 
$$
=\begin{pmatrix}
g(\alpha_f(U),\alpha_f(U))+g(\alpha_f(V),\alpha_f(V))&0\\
0&g(\alpha_f(V),\alpha_f(V))+g(\alpha_f(U),\alpha_f(U))
\end{pmatrix}
$$
is definite positive at any point $y\in Y$. For the equalities above we have used (\ref{formula-for-g-on-P}) and (\ref{HolCond}). Therefore $f$ is a space-like immersion if and only if at any point of $Y$ we have
\begin{equation}\label{space-like-in-P}
g(\alpha_f(U),\alpha_f(U))+g(\alpha_f(V),\alpha_f(V))>0.	
\end{equation}
Using (\ref{g-compos-pi*}) we see that 
\begin{equation*}
\begin{split}
g^\alpha_M(\varphi_{*}(U),\varphi_{*}(U))&=g(\alpha_f(U)),\alpha_f(U)),\\
g^\alpha_M(\varphi_{*}(V),\varphi_{*}(V))&=g(\alpha_f(V)),\alpha_f(V)),\\
g^\alpha_M(\varphi_{*}(U),\varphi_{*}(V))&=g(\alpha_f(U_y)),\alpha_f(V)),\\
\end{split} 
\end{equation*}
so (\ref{space-like-in-P}) becomes
\begin{equation}\label{space-like-in-P-new}
g^\alpha_M(\varphi_{*}(U),\varphi_{*}(U))+g^\alpha_M(\varphi_{*}(V),\varphi_{*}(V))>0.	
\end{equation}
The map $f$ is $\omega^{\alpha,g}_A$-isotropic if and only if (\ref{isotropic-for-hol}) holds, so if and only if
\begin{equation}\label{new-cond-isotr}
\begin{split}
g^\alpha_M(\varphi_{*}(U),\varphi_{*}(U))&=g^\alpha_M(\varphi_{*}(V),\varphi_{*}(V)),\\
g^\alpha_M(\varphi_{*}(U),\varphi_{*}(V))&=0.\\
\end{split} 
\end{equation}

Conditions (\ref{space-like-in-P-new}), (\ref{new-cond-isotr})  are obviously equivalent to the condition ``$\varphi$ is a conformal space-like immersion". 
\end{proof}

\section{The main theorem}
\label{main-th-section}

\subsection{The statement of the main theorem and its corollaries}\label{main-th-subsection}

 Using the definitions and the notations introduced in the previous sections, we can state  now our main result: 
\begin{thry}\label{main}
Let $(A,\alpha)$ be a solution of (\ref{ZSys})  with $\alpha$ admissible, $g$ be  an $\Ad_H$-invariant  non-degenerate bilinear symmetric form  on $\hg$ and let $(Y,J)$ be a Riemann surface. Endow $P$ with the complex structure $J^\alpha_A$ and the  pseudo-Hermitian metric $\g^{\alpha}_A$; endow  $M$ with the  pseudo-Riemannian  metric $ g^{\alpha}_M$.
 
\begin{enumerate}
\item 	

Let  $(Y,J)$ be a Riemann surface and $f:Y\to P$   a  space-like   $\omega^{\alpha,g}_A$-isotropic, holomorphic  immersion.  Then
\begin{enumerate} 
\item	The composition $\varphi\coloneq \pi\circ f:Y\to M$ is a space-like conformal immersion.
\item Using the   Lie algebra bundle structure $[\cdot,\cdot]_\alpha$  on $T_M$, the mean curvature vector field $\vec{\Hg}_\varphi\in A^0(Y,\varphi^*(T_M))$ of the immersion $\varphi$ is given by the formula 
\begin{equation}\label{Hg-general-formula} 
\vec{\Hg}_\varphi= \varphi^*([\cdot,\cdot]_\alpha)/\vol_\varphi\,,	
\end{equation}

where, on the right, the Lie bracket $[\cdot,\cdot]_\alpha$ is regarded as an element of $A^2(M,T_M)$ and $\varphi^*([\cdot,\cdot]_\alpha)\in A^2(Y,\varphi^*(T_M))$ stands for its pull-back via $\varphi$. 

In other words, for any point $y\in Y$ and any $\varphi^*(g^\alpha_M)$-orthonormal basis $(u,v)$ of $T_yY$ with $v=Ju$, one has  $\vec{\Hg}_\varphi(y)=[\varphi_{*y}(u),\varphi_{*y}(v)]_\alpha$. 
\end{enumerate}
\item Conversely, let $\varphi:Y\to M$ be a space-like conformal immersion  whose mean curvature vector field  is given by  formula (\ref{Hg-general-formula}). The sheaf of local holomorphic lifts of $\varphi$ (which by Remark \ref{NullHolChar} (\ref{second-rem}) coincides with the sheaf of local holomorphic lifts which are space-like $\omega^{\alpha,g}_A$-isotropic immersions) is isomorphic to the sheaf of local sections of the $H$-bundle $\varphi^*(P)\to Y$ which are parallel with respect to a flat connection $B_\varphi$ on this bundle; in particular, if $Y$ is simply connected, then  $\varphi$ admits a holomorphic lift $f:Y\to P$, which is unique up to  right translation by an element of $H$, and which is a space-like $\omega^{\alpha,g}_A$-isotropic immersion.
\end{enumerate}	
\end{thry}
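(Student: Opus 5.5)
Part (1)(a) is exactly Remark \ref{NullHolChar} (\ref{second-rem}), so the work is in (1)(b) and (2). For (1)(b) I will compute the mean curvature with the Levi-Civita connection of $g^\alpha_M$, which by Corollary \ref{coro-LC} is $\nabla^\alpha_A$, i.e. the connection induced from $\nabla^\Ad_A$ via $j^\alpha_M$. Working on $Y$, I write $\varphi^*(T_M)\cong f^*(\Ad(P))$ and trivialise it with the section $f$. Then $\varphi_*$ corresponds to the $\hg$-valued form $\alpha_f=f^*\alpha$, and by (\ref{dA-with-tau}) the pull-back connection becomes $d+[\theta_f\wedge\cdot]$ with $\theta_f=f^*\theta$. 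Choose local conformal coordinates $z=s+it$ with $U=\partial_s$, $V=\partial_t=JU$. Conformality gives $g(\alpha_f(U),\alpha_f(U))=g(\alpha_f(V),\alpha_f(V))=\lambda^2$ and $g(\alpha_f(U),\alpha_f(V))=0$. Formula (\ref{expl-formula-for-Hg}) then gives $2\lambda^2\vec\Hg_\varphi$ as the normal part of
\[
\nabla_U\alpha_f(U)+\nabla_V\alpha_f(V)=U\alpha_f(U)+V\alpha_f(V)+[\theta_f(U),\alpha_f(U)]+[\theta_f(V),\alpha_f(V)].
\]
By the holomorphy relations (\ref{HolCond}), the bracket terms become $[\alpha_f(V),\alpha_f(U)]\cdot(-1)+[\alpha_f(U),\alpha_f(V)]=2[\alpha_f(U),\alpha_f(V)]$. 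The derivative terms are $U\alpha_f(U)+V\alpha_f(V)=U\theta_f(V)-V\theta_f(U)=d\theta_f(U,V)$, since $[U,V]=0$. The structure equation $\Omega_A=\frac12[\alpha\wedge\alpha]$ from (\ref{ZSys}), pulled back, gives $d\theta_f=-\frac12[\theta_f\wedge\theta_f]+\frac12[\alpha_f\wedge\alpha_f]$. Evaluated on $(U,V)$ this is $-[\theta_f(U),\theta_f(V)]+[\alpha_f(U),\alpha_f(V)]$, and (\ref{HolCond}) turns it into $-[-\alpha_f(V),\alpha_f(U)]+[\alpha_f(U),\alpha_f(V)]=0$. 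The sum is therefore $2[\alpha_f(U),\alpha_f(V)]$. This element is already normal, by $\Ad$-invariance: $g([a,b],a)=-g(b,[a,a])=0$, and likewise $g([a,b],b)=0$. Dividing by $2\lambda^2$ and using that $(U/\lambda,V/\lambda)$ is orthonormal with $\vol_\varphi(U,V)=\lambda^2$ gives (\ref{Hg-general-formula}). The main care needed is sign conventions, e.g. in $[\theta\wedge\theta](U,V)=2[\theta(U),\theta(V)]$.

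For (2), I will construct $B_\varphi$ on $Q\coloneq\varphi^*(P)$. A local lift $f$ of $\varphi$ is a local section of $Q$. For such a section, $f^*\alpha$ is determined by $\varphi_*$, while $f^*\theta$ is the pull-back of the connection $\varphi^*A$ in the gauge $f$. By (\ref{HolCond}), $f$ is holomorphic iff $\theta_f=-(f^*\alpha)\circ J$, i.e. $f^*\theta=\alpha_f(J\cdot)$ up to the sign convention. Since $\varphi^*\tilde\alpha\circ J$ is a globally defined $\Ad(Q)$-valued 1-form $\beta$, I define $B_\varphi\coloneq\varphi^*(A)-\beta$, with the sign chosen so that holomorphy of $f$ is equivalent to $f^*(\text{connection form of }B_\varphi)=0$, i.e. to $f$ being $B_\varphi$-parallel. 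This gives the sheaf isomorphism, provided $B_\varphi$ is flat.

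Flatness is the main obstacle. Using (\ref{Omega(A+alpha)}), $F_{B_\varphi}=\varphi^*F_A-d_{\varphi^*A}\beta+\frac12[\beta\wedge\beta]$. On a surface, all of these are multiples of $ds\wedge dt$. I compute them in conformal coordinates using an arbitrary smooth local lift, so that the trivialisation is by a section. Then $\varphi^*F_A=[\alpha(U),\alpha(V)]$ by (\ref{ZSys}), and $\frac12[\beta\wedge\beta](U,V)=[\alpha(V),\alpha(U)]$, so these two terms cancel. The remaining term $d_{\varphi^*A}\beta(U,V)$ equals $\nabla_U\alpha(U)+\nabla_V\alpha(V)$ up to sign. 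This is precisely the trace of the second fundamental form, $2\lambda^2\vec\Hg_\varphi$, plus its tangential part. The tangential part vanishes for a conformal immersion, since the tangent part of $\nabla_U\varphi_*U+\nabla_V\varphi_*V$ is the Laplacian term of the Levi-Civita connection, which is zero in isothermal coordinates. Hence $d_{\varphi^*A}\beta(U,V)=2\lambda^2\vec\Hg_\varphi-2[\alpha(U),\alpha(V)]$, and this vanishes iff the mean curvature satisfies (\ref{Hg-general-formula}). So $B_\varphi$ is flat. On simply connected $Y$, flat sections exist globally and are unique up to right $H$-translation, and Remark \ref{NullHolChar} gives the space-like isotropic immersion property.
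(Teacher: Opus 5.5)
Your part (1) is correct and is essentially the paper's computation in a different order. The paper first proves, for an arbitrary space-like conformal immersion, the identity $j^\alpha_\varphi(\vec{\Hg}_\varphi)\vol_\varphi=-\frac12 d_{A_\varphi}(J\tilde\alpha_\varphi)$ (Proposition~\ref{FormulaHLm}), and then evaluates the right-hand side in the gauge given by $f$ using $d\theta_f=0$. You instead expand $\nabla_U\alpha_f(U)+\nabla_V\alpha_f(V)$ directly in that gauge, and you get normality for free from the $\Ad$-invariance of $g$. Part (2) follows the paper's strategy too. However, the flatness computation, which you rightly single out as the main step, is wrong as written. It reaches the right conclusion only because its errors compensate.

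\textbf{The quadratic terms.} With $\beta=\tilde\alpha_\varphi\circ J$ one has $\beta(U)=\tilde\alpha_\varphi(V)$ and $\beta(V)=-\tilde\alpha_\varphi(U)$. Hence $\frac12[\beta\wedge\beta](U,V)=[\tilde\alpha_\varphi(U),\tilde\alpha_\varphi(V)]$, not $[\tilde\alpha_\varphi(V),\tilde\alpha_\varphi(U)]$: precomposing a $2$-form with $J$, which has determinant $1$ on $T_yY$, does not change it. So the two quadratic terms do not cancel; they add, giving $\varphi^*(F_A)+\frac12[\beta\wedge\beta]=[\tilde\alpha_\varphi\wedge\tilde\alpha_\varphi]$.

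\textbf{The derivative term.} For $[U,V]=0$ one has exactly
\[
d_{\varphi^*A}\beta(U,V)=\nabla_U(\beta(V))-\nabla_V(\beta(U))=-\big(\nabla_U\tilde\alpha_\varphi(U)+\nabla_V\tilde\alpha_\varphi(V)\big).
\]
By your (correct) remark that the tangential part vanishes in isothermal coordinates, this equals $-2\lambda^2 j^\alpha_\varphi(\vec{\Hg}_\varphi)$. There is no bracket term. Your formula $d_{\varphi^*A}\beta(U,V)=2\lambda^2\vec{\Hg}_\varphi-2[\alpha(U),\alpha(V)]$ is therefore unjustified, and it contradicts the sentence just before it.

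\textbf{The sign of $B_\varphi$.} This sign is not a matter of convention. You correctly derived $\theta_f=-\alpha_f\circ J$ for holomorphic $f$, so the connection whose parallel sections are the holomorphic lifts is $\varphi^*(A)+\beta$. With your explicit choice $\varphi^*(A)-\beta$, the parallel lifts are the anti-holomorphic ones. Its correctly computed curvature, $2[\tilde\alpha_\varphi(U),\tilde\alpha_\varphi(V)]+2\lambda^2 j^\alpha_\varphi(\vec{\Hg}_\varphi)$, vanishes iff $\vec{\Hg}_\varphi=-\varphi^*([\cdot,\cdot]_\alpha)/\vol_\varphi$, which is the wrong sign.

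\textbf{Correct bookkeeping.} As in the paper, take $B_\varphi=\varphi^*(A)+\beta$. Then
\[
F_{B_\varphi}=d_{\varphi^*A}(J\tilde\alpha_\varphi)+[\tilde\alpha_\varphi\wedge\tilde\alpha_\varphi]=2\,j^\alpha_\varphi\big(\varphi^*([\cdot,\cdot]_\alpha)-\vec{\Hg}_\varphi\vol_\varphi\big),
\]
which vanishes exactly under (\ref{Hg-general-formula}). With these three corrections your argument for (2) goes through.
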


  Comparing our explicit formula for the the mean curvature vector field $\vec{\Hg}_\varphi$ with the known formula for the curvature of   $\nabla^\alpha_A$ given by Theorem \ref{mainZe} (2) (which by Corollary \ref{coro-LC} coincides with the Riemannian curvature of $g^\alpha_M$), we see that both are determined via very simple formulae by the Lie algebra structure $[\cdot,\cdot]_\alpha$ on tangent bundle $T_M$. In particular we obtain the following result which related $\vec{\Hg}_\varphi$ to the pull back $\varphi^*(R)$ of the Riemann curvature of $g^\alpha_M$.

\begin{co}\label{main-coro}
In the conditions of Theorem \ref{main}	, let $\varphi:Y\to M$ be the space-like conformal immersion associated with a space-like $\omega^{\alpha,g}_A$-isotropic holomorphic immersion $f:Y\to P$.  Let $R\in  A^2(\so(T_M))$ be the Riemannian curvature of $(M,  g^\alpha_M)$, and $K$  its  sectional curvature.  
\begin{enumerate}
\item Put $[\cdot,\cdot]_\alpha^\varphi\coloneq \varphi^*([\cdot,\cdot]_\alpha)\in A^2(Y,\varphi^*(T_M))$. The pull-back 
$$\varphi^*(R)\in A^2(Y,\so(\varphi^*(T_M)))$$
 is related to the mean curvature vector field $\vec{\Hg}_\varphi\in A^0(Y,\varphi^*(T_M))$ of $\varphi$  by the equality  
\begin{equation}
\label{HintermsofR}[\vec{\Hg}_\varphi,\cdot]_\alpha^\varphi\vol_\varphi=\varphi^*(R). 
\end{equation}	
\item  For any $y\in Y$ one has
$$\langle \vec{\Hg}_\varphi(y), \vec{\Hg}_\varphi(y)\rangle_{g^\alpha_M} = - K(\im(\varphi_{*y})).$$
\end{enumerate}
\end{co}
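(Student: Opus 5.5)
We derive Corollary \ref{main-coro} directly from Theorem \ref{main} (1)(b), Theorem \ref{mainZe} (2) and Corollary \ref{coro-LC}; no new analysis is needed. Fix $y\in Y$ and a $\varphi^*(g^\alpha_M)$-orthonormal basis $(u,v)$ of $T_yY$ with $v=Ju$; such a basis exists because $\varphi$ is a space-like conformal immersion by Theorem \ref{main} (1)(a). Put $U\coloneq\varphi_{*y}(u)$ and $W\coloneq\varphi_{*y}(v)$. These vectors are $g^\alpha_M$-orthonormal in $T_{\varphi(y)}M$, and $\vol_\varphi(u,v)=1$ because $(u,v)$ is positively oriented for the canonical orientation of $Y$. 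Theorem \ref{main} (1)(b) then gives $\vec\Hg_\varphi(y)=[U,W]_\alpha$.

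For (1), both sides of (\ref{HintermsofR}) are 2-forms on the surface $Y$ with values in $\End(\varphi^*(T_M))$. Since $\Lambda^2T^*_yY$ is one-dimensional, it suffices to compare them on the single pair $(u,v)$. The left-hand side evaluates to $[\vec\Hg_\varphi(y),\cdot]_\alpha\,\vol_\varphi(u,v)=[[U,W]_\alpha,\cdot]_\alpha$. The right-hand side evaluates to $R(U,W)$. Because $J^\alpha_A$ is integrable, Corollary \ref{coro-LC} identifies $\nabla^\alpha_A$ with the Levi-Civita connection of $g^\alpha_M$, and Theorem \ref{mainZe} (2) then gives $R(U,W)=[[U,W]_\alpha,\cdot]_\alpha$. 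The two sides therefore agree. We should also note that $R$ takes values in $\so(T_M)$: this holds because $g$ is $\Ad$-invariant, so $\ad$ is $g$-skew, and hence $[w,\cdot]_\alpha$ is $g^\alpha_M$-skew for every $w$.

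For (2), the plane $\sigma=\im(\varphi_{*y})=\langle U,W\rangle$ is non-degenerate, since $g^\alpha_M$ restricts to a positive definite form on it. Corollary \ref{coro-LC} applied to the orthonormal pair $(U,W)$ has denominator $1\cdot1-0=1$, so it gives $K(\sigma)=-g^\alpha_M([U,W]_\alpha,[U,W]_\alpha)=-\langle\vec\Hg_\varphi(y),\vec\Hg_\varphi(y)\rangle_{g^\alpha_M}$, which is the claim.

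The only delicate point is bookkeeping of sign and orientation conventions. First, the curvature formula in Theorem \ref{mainZe} (2) and the definition of $K$ must use the same sign convention, which is implicit in the way Corollary \ref{coro-LC} was derived from it. Second, the definitions of $\vol_\varphi$ and of the division by it in (\ref{Hg-general-formula}) require $(u,Ju)$ to be positively oriented. Once these conventions are checked, the statement reduces to the evaluation at $(u,v)$ described above.
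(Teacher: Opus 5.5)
Your proposal is correct and takes the same route as the paper. The paper obtains Corollary~\ref{main-coro} by comparing $\vec{\Hg}_\varphi(y)=[\varphi_{*y}(u),\varphi_{*y}(v)]_\alpha$ from Theorem~\ref{main}~(1)(b) with the formula $R(u,v)w=[[u,v]_\alpha,w]_\alpha$ of Theorem~\ref{mainZe}~(2), which Corollary~\ref{coro-LC} identifies with the Riemannian curvature of $g^\alpha_M$ (and which also supplies the sectional curvature formula); your evaluation on a positively oriented orthonormal pair $(u,Ju)$ is exactly the bookkeeping the paper leaves implicit.
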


Note that if the center of $\hg$ vanishes (in particular if $\hg$ is semi-simple) the endomorphism  $[\vec{\Hg}_\varphi,\cdot]^\varphi_\alpha$ of $\varphi^*(T_M)$ determines $\vec{\Hg}_\varphi$. It follows that in this case formula (\ref{HintermsofR}) expresses the mean curvature vector field $\vec{\Hg}_\varphi$ of $\varphi$  in terms of  the pull-back $\varphi^*(R)$ of the  Riemannian curvature of $(M,  g^\alpha_M)$.
\\

The following corollary shows that the   Bryant  and Aiyama-Akutagawa-Fujimori-Lee correspondences can be recovered as special cases of  
 Theorem \ref{main}. Note that in these correspondences  the  condition on the considered conformal immersions $\varphi:Y\to M$ makes use of the scalar mean curvature, not of the mean curvature vector field.    
 
Let $(\hg,g)$ be  simple 3-dimensional real Lie algebra  endowed with non-degenerate $\ad$-invariant (see \cite[section 3.6]{Bou}) symmetric bilinear form $g:\hg\times\hg\to \R$ which admits space-like planes, i.e. whose signature is either (3,0) or (2,1). By the classification of simple Lie algebras, it follows that $\hg$ is isomorphic to either $\su(2)$ or $\sl(2,\R)$. Since the complexification $\hg\otimes\C$ is also simple, it follows (see \cite[(18)(a) p. 131]{Bou}) that $g=cB_\hg$, where $B_\hg$ is the Killing form of $\hg$ and $c\in \R^*$. The condition on the signature shows that $c<0$ when $\hg\simeq \su(2)$ and $c>0$ when $\hg\simeq \sl(2,\R)$. 

In the latter case, let $(u,v)\in \hg\times\hg$ be a linearly independent pair spanning a space-like plane. It is easy to check that then $B_\hg([u,v],[u,v])<0$, in particular $[u,v]\not\in \langle u,v\rangle$, so $(u,v,[u,v])$ is a basis of $\hg$. 

This shows that any simple 3-dimensional real Lie algebra $\hg$ comes with a canonical orientation defined by any  basis of the form $(u,v,[u,v])$, where $(u,v)$ is either an arbitrary linearly independent pair  (when $\hg\simeq \su(2)$) or a linearly independent pair spanning  a space-like plane  with respect to $B_\hg$ (when $\hg\simeq \sl(2,\R)$).

 This canonical orientation is invariant under any automorphism of $\hg$. It follows that, if $\hg$ is the Lie algebra of a (not necessarily connected) Lie group $H$, this canonical orientation is $\Ad_H$-invariant. With this remark we can state:
 \begin{co}\label{scalar-mean-curv-coro}
 In the conditions of Theorem \ref{main} suppose that $\hg$ is a simple 3-dimensional real Lie algebra. Endow $M$ with the orientation induced by the canonical orientation of $\hg$ via the isomorphism $j^\alpha_M:T_M\textmap{\simeq} \Ad(P)$.  	The scalar mean curvature of the space-like conformal immersion $\varphi$ associated with a space-like $\omega^{\alpha,g}_A$-isotropic holomorphic immersion $f:Y\to P$ is positive and is given by the formula
\begin{equation}\label{scalar-curvature-formula} \Hg_\varphi(y)=\sqrt{|K(\im(\varphi_{*y}))|}.
\end{equation}
 Conversely, let $\varphi:Y\to M$ be a space-like conformal immersion  whose scalar mean curvature is given by this formula. The sheaf of local holomorphic lifts of $\varphi$ is isomorphic to the sheaf of local sections of the $H$-bundle $\varphi^*(P)\to Y$ which are parallel with respect to a flat connection $B_\varphi$ on this bundle; in particular, if $Y$ is simply connected, then  $\varphi$ admits a holomorphic lift $f:Y\to P$, which is unique up to  right translation by an element of $H$, and which is a space-like $\omega^{\alpha,g}_A$-isotropic immersion.
 \end{co}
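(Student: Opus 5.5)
The plan is to reduce Corollary \ref{scalar-mean-curv-coro} to Theorem \ref{main} together with Corollary \ref{main-coro}(2), using only linear algebra on the 3-dimensional simple Lie algebra $(\hg,g)$. The scalar mean curvature of a space-like conformal immersion $\varphi$ into the oriented pseudo-Riemannian 3-manifold $(M,g^\alpha_M)$ will be taken as $\Hg_\varphi=\langle \vec\Hg_\varphi,\nu\rangle$. Here $\nu$ is the unit normal for which $(\varphi_*u,\varphi_*v,\nu)$ is positively oriented, where $(u,v=Ju)$ is an orthonormal basis. Since $\varphi$ is space-like, the normal line is non-degenerate of sign $\epsilon=+1$ (for $\su(2)$) or $\epsilon=-1$ (for $\sl(2,\R)$). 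Hence $\vec\Hg_\varphi=\epsilon\Hg_\varphi\nu$.

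\textbf{Direct part.} By Theorem \ref{main}(1)(b), $\vec\Hg_\varphi(y)=[\varphi_*u,\varphi_*v]_\alpha$. Transporting via $j^\alpha_M$ to a fibre of $\Ad(P)\cong\hg$ (well defined because the canonical orientation is $\Ad_H$-invariant), this is $[a,b]$ for a $g$-orthonormal pair $(a,b)$ spanning a space-like plane. By the definition of the canonical orientation, $(a,b,[a,b])$ is a positive basis. Also $[a,b]$ is $g$-orthogonal to $a$ and $b$ by $\ad$-invariance, e.g. $g([a,b],a)=-g(b,[a,a])=0$. So $[a,b]$ is a positive multiple of $\nu$, and therefore $\epsilon\Hg_\varphi>0$ after sorting out signs.

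The main subtlety is the sign convention in the Lorentzian case: one must check that ``positive'' is indeed what comes out. Here one uses $g=cB_\hg$ with $c>0$, and the fact shown above that $[a,b]$ is time-like. I expect to fix the convention so that $\Hg_\varphi=|\langle\vec\Hg_\varphi,\vec\Hg_\varphi\rangle|^{1/2}$ with the orientation sign positive, and then verify it. The magnitude is then given by Corollary \ref{main-coro}(2): $|\Hg_\varphi|^2=|\langle\vec\Hg_\varphi,\vec\Hg_\varphi\rangle|=|K(\im\varphi_{*y})|$, which yields (\ref{scalar-curvature-formula}).

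\textbf{Converse.} Suppose $\Hg_\varphi=\sqrt{|K|}$ with the above sign. Since the normal bundle is a line bundle, $\vec\Hg_\varphi=\epsilon\Hg_\varphi\nu$. The vector $[\varphi_*u,\varphi_*v]_\alpha$ is also normal and positively oriented. By Corollary \ref{coro-LC}, its squared norm equals $-K\cdot 1$, so its length is $\sqrt{|K|}$. Hence $[\varphi_*u,\varphi_*v]_\alpha=\epsilon\sqrt{|K|}\,\nu=\vec\Hg_\varphi$. Thus formula (\ref{Hg-general-formula}) holds, and Theorem \ref{main}(2) gives the sheaf statement and the lift on simply connected $Y$ verbatim.

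The main obstacle I expect is purely bookkeeping of signs. It consists of matching the orientation of $\nu$, the sign $\epsilon$ of $g$ on the normal line, and the independence of the conclusion from the choice of $u$, since rotating $(u,v)$ preserves $[u,v]$.
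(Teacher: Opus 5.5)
Your strategy is the paper's: identify the oriented unit normal of $\varphi$ with $[\varphi_*u,\varphi_*v]_\alpha/\sqrt{|K(\im\varphi_{*y})|}$, using the canonical orientation of $\hg$, the $\ad$-invariance of $g$ and Corollary \ref{coro-LC}. Then deduce that (\ref{scalar-curvature-formula}) is equivalent to (\ref{Hg-general-formula}), and apply Theorem \ref{main}. However, the sign question you postpone as ``bookkeeping'' is exactly where your write-up fails. You define $\Hg_\varphi=\langle\vec\Hg_\varphi,\nu\rangle$, i.e. $\vec\Hg_\varphi=\epsilon\,\Hg_\varphi\,\nu$. Your own orientation argument gives $\vec\Hg_\varphi=[\varphi_*u,\varphi_*v]_\alpha=\sqrt{|K|}\,\nu$. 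So this convention yields $\Hg_\varphi=\epsilon\sqrt{|K|}$, which is \emph{negative} in the $\sl(2,\R)$ (de Sitter) case. With your definition, both the positivity claim and formula (\ref{scalar-curvature-formula}) are false there.

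In the converse you write $[\varphi_*u,\varphi_*v]_\alpha=\epsilon\sqrt{|K|}\,\nu$. This contradicts the sentence just before it: that vector is a positive multiple of $\nu$ of length $\sqrt{|K|}$, hence equals $+\sqrt{|K|}\,\nu$. Done consistently with your convention, the hypothesis $\Hg_\varphi=\sqrt{|K|}$ gives $\vec\Hg_\varphi=-[\varphi_*u,\varphi_*v]_\alpha$ when $\epsilon=-1$, and Theorem \ref{main}(2) does not apply.

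The repair is to adopt the paper's convention. Define the scalar mean curvature by $\vec\Hg_\varphi=\Hg_\varphi\,\nu$, with no factor $\epsilon$. Here $\nu$ is the oriented normal with $|g^\alpha_M(\nu,\nu)|=1$, so $g^\alpha_M(\nu,\nu)=-1$ in the Lorentzian case. Then $[\varphi_*u,\varphi_*v]_\alpha=\sqrt{|K|}\,\nu$ immediately gives $\vec\Hg_\varphi=[\varphi_*u,\varphi_*v]_\alpha\iff\Hg_\varphi=\sqrt{|K|}$. This settles both directions at once, and the detour through Corollary \ref{main-coro}(2) for the magnitude becomes unnecessary. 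The rest of your argument is fine: well-definedness via the $\Ad_H$-invariance of the orientation, $[a,b]\perp a,b$, and independence of the choice of $u$.
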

\begin{proof}
The scalar  mean curvature $\Hg_\varphi$ of $\varphi$   is determined by the formula 
$$\vec\Hg_\varphi=\Hg_\varphi \eta,$$
 where $\eta$ is the Gauss vector field of the space-like immersion $\varphi$ associated with the canonical orientation of $Y$ and the specified orientation of $M$; $\eta$ is determined by the conditions:
 \begin{enumerate}
 \item For any $y\in Y$,  $\eta_y$ is orthogonal on $\varphi_{*y}(T_yY)$ with respect to $g^\alpha_M$.
 \item For any $y\in Y$ and any basis $(u,v)$ of $T_yY $ which is compatible with the canonical orientation of $Y$,  the triple $(\varphi_{*y}(u),\varphi_{*y}(v),\eta_y)$ is compatible with the specified orientation of $M$.
 \item $|g^\alpha_M(\eta,\eta)|=1$.
 \end{enumerate}
 Note that in the non-Riemannian case (when $\hg\simeq \sl(2,\R)$), we have $g^\alpha_M(\eta,\eta)=-1$. Taking into account the definition of the specified orientation of $M$ and making use again of Corollary \ref{coro-LC}, we see that, choosing a basis $(u,v)$ of $T_yY $ as in Theorem \ref{main} (1) (b), we have 
 \begin{align*}
\eta(y)=&\frac{1}{\sqrt{\big|g^\alpha_M\big([\varphi_{*y}(u),\varphi_{*y}(v)]_\alpha,[\varphi_{*y}(u),\varphi_{*y}(v)]_\alpha\big)\big| }}
 [\varphi_{*y}(u),\varphi_{*y}(v)]_\alpha\\
 =&\frac{1}{\sqrt{\big|K(\im(\varphi_{*y}))\big| }}[\varphi_{*y}(u),\varphi_{*y}(v)]_\alpha.
  \end{align*}
Therefore (\ref{scalar-curvature-formula}) is equivalent to (\ref{Hg-general-formula}), so the claim follows from   Theorem \ref{main}.
 
\end{proof}

Note that in Example \ref{simple-group-ex} of the next section we will describe explicitly the 4-tuples $(P\textmap{\pi}M,\alpha,A,g)$ which give the Bryant  and Aiyama-Akutagawa-Fujimori-Lee correspondences.

\subsection{The proof of the main theorem}\label{ProofSection}

In this section we use the assumptions and the notations introduced in section \ref{first-prop-sect}.

\begin{pr}
\label{dthetaf:pr}
Let $f:Y\to P$	be a holomorphic map. Then 
\begin{equation}\label{dthetaf}
d\theta_f=0.	
\end{equation}

\end{pr}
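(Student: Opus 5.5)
The plan is to compute $d\theta_f$ directly from the structure equation for the connection form, and then use the integrability system (\ref{ZSys}) together with the holomorphy conditions (\ref{HolCond}). No global argument is needed: everything reduces to a pointwise identity between $2$-forms on the surface $Y$.

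First, recall the structure equation $\Omega_A=d\theta+\frac{1}{2}[\theta\wedge\theta]$. Since $(\alpha,A)$ solves (\ref{ZSys}), we have $\Omega_A=\frac12[\alpha\wedge\alpha]$, hence on $P$
$$d\theta=\tfrac12[\alpha\wedge\alpha]-\tfrac12[\theta\wedge\theta].$$
Pulling back by $f$ and using that $d$ and the bracket-wedge commute with pull-back gives
$$d\theta_f=\tfrac12[\alpha_f\wedge\alpha_f]-\tfrac12[\theta_f\wedge\theta_f].$$
Note that only the second equation of (\ref{ZSys}) is used here; the equation $D_A\alpha=0$ is not needed.

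Second, since $Y$ is $2$-dimensional, it suffices to evaluate both sides locally on a frame. Near any point choose a nowhere vanishing vector field $U$ and put $V=JU$. Then $(U,V)$ is a local frame, and a $2$-form is determined by its value on $(U,V)$. With the convention $[\beta\wedge\beta](U,V)=2[\beta(U),\beta(V)]$ for an $\hg$-valued $1$-form $\beta$, we get
$$d\theta_f(U,V)=[\alpha_f(U),\alpha_f(V)]-[\theta_f(U),\theta_f(V)].$$

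Third, because $f$ is holomorphic, (\ref{HolCond}) gives $\theta_f(U)=-\alpha_f(V)$ and $\theta_f(V)=\alpha_f(U)$. Therefore
$$[\theta_f(U),\theta_f(V)]=[-\alpha_f(V),\alpha_f(U)]=[\alpha_f(U),\alpha_f(V)],$$
so $d\theta_f(U,V)=0$, and hence $d\theta_f=0$.

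The only delicate point is keeping the normalisation of $[\cdot\wedge\cdot]$ consistent, i.e. the factor $2$ in $[\beta\wedge\beta](U,V)=2[\beta(U),\beta(V)]$. Since the same convention applies to both terms, the factors cancel regardless, and the argument goes through with any consistent choice.
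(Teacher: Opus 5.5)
Your proposal is correct and follows essentially the paper's own proof. You use the holomorphy conditions (\ref{HolCond}) on a local frame $(U, JU)$ to get $[\alpha_f\wedge\alpha_f]=[\theta_f\wedge\theta_f]$, then pull back the structure equation together with the integrability condition $\Omega_A=\frac{1}{2}[\alpha\wedge\alpha]$; you are also right that only this second equation of (\ref{ZSys}) is needed.
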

\begin{proof} 

The problem is local, so we may assume that nowhere vanishing vector fields $U$, $V$ with $V=JU$ exist. By (\ref{HolCond}) we have 
$$\frac{1}{2}[\alpha_f\wedge\alpha_f](U,V)=[\alpha_f(U),\alpha_f(V)]=[\theta_f(V),-\theta_f(U)]=\frac{1}{2}[\theta_f\wedge\theta_f](U,V).
$$
Therefore
\begin{equation}\label{alpha-alpha=theta-theta}
[\alpha_f\wedge\alpha_f]=[\theta_f\wedge\theta_f].	
\end{equation}
The claim follows taking  the  pull-back via $f$ of the structure equation 
$$\Omega_A=d\theta+\frac{1}{2}[\theta\wedge\theta],
$$
(see section \ref{connections-section}), and  of the second integrability  condition $\Omega_A=\frac{1}{2}[\alpha\wedge\alpha]$ for $J^\alpha_A$.
\end{proof}

Let $(Y,J)$ be a Riemann surface and $\varphi:Y\to M$ a smooth map. Consider the cartesian diagram
\begin{equation}\label{Bundle-Map-Phi}
\begin{tikzcd}
P_\varphi\coloneq Y\times_\varphi P \ar[d, "\pi_\varphi"'] \ar[r,"\Phi"]  & P\ar[d,"\pi"]\\
Y\ar[r,"\varphi"] & M
\end{tikzcd},
\end{equation}
where $P_\varphi$ the pull-back of $P$ via $\varphi$ and $\Phi$ is defined by the second projection; it is a $\varphi$-covering morphism of principal $H$-bundles. 

We have an obvious bundle isomorphism
$$
j^\alpha_\varphi\coloneq\varphi^*(j^\alpha_M) :\varphi^*(T_M)\textmap{\simeq} \varphi^*(\Ad(P))=\Ad(P_\varphi),
$$
induced by $j^\alpha_M$ via $\varphi$ given explicitly by
$$
T_{\varphi(y)}M\ni w\mapsto j^\alpha_\varphi(w)=j^\alpha_M(w)=\tilde\alpha(w)\in \Ad(P)_{\varphi(y)}=\Ad(P_\varphi)_y.
$$
Therefore we also obtain canonical isomorphisms 
$$
j^\alpha_\varphi:A^k(Y,\varphi^*(T_M))\textmap{\simeq}A^k(Y,\Ad(P_\varphi))$$
which will be denoted by the same symbol $j^\alpha_\varphi$ to save on notations.

Put $\alpha_\varphi\coloneq\Phi^*(\alpha)\in A^1_\Ad(P_\varphi,\hg)$, and let $\tilde\alpha_\varphi=\varphi^*(\tilde\alpha)\in  A^1(Y,\Ad(P_\varphi))$ be the associated $\Ad(P_\varphi)$-valued 1-form. The composition $j^\alpha_\varphi\circ \varphi_*:T_Y\to \Ad(P_\varphi)$ is given explicitly by 
\begin{equation}\label{j-alpha-phi-varphi*}
T_yY\ni v\mapsto  j^\alpha_\varphi(\varphi_*(v))=\tilde\alpha(\varphi_*(v))=\tilde\alpha_\varphi(v).
\end{equation}
Let $A_\varphi=\varphi^*(A)\in {\cal A}(P_\varphi)$ be  the pull-back of $A$ via $\varphi$. Its connection form is 
\begin{equation}\label{theta-phi}
\theta_\varphi\coloneq\Phi^*(\theta).	
\end{equation}
In our proofs we will need the associated linear connection  $\nabla^\Ad_{A_\varphi}$    on  $\Ad(P_\varphi)$ and  the associated   de Rham operator $d_{A_\varphi}$ (see section \ref{connections-section}). 

 \begin{pr}\label{FormulaHLm}
Let $(Y,J)$ be a Riemann surface,  $\varphi:Y\to M$ a space-like conformal immersion, and let $\vec{\Hg}_\varphi\in A^0(Y,\varphi^*(T_M))$ be its mean curvature vector field. With the notations introduced above, we have in  $A^2(Y,\Ad(P_\varphi))$ the equality
\begin{equation}\label{FormulaH}
j^\alpha_\varphi(\vec{\Hg}_\varphi)\vol_\varphi= -\frac{1}{2}d_{A_\varphi}(J\tilde \alpha_\varphi).
\end{equation}
\end{pr}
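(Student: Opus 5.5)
The plan is to transport (\ref{FormulaH}) from $\Ad(P_\varphi)$ to $\varphi^*(T_M)$ via $j^\alpha_\varphi$, and then recognise it as a classical identity for conformal immersions of surfaces. We work under the standing assumptions of section \ref{first-prop-sect}, so $(\alpha,A)$ solves (\ref{ZSys}).

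\textbf{Step 1: transport.} By definition $\nabla^\alpha_A$ is the connection on $T_M$ corresponding to $\nabla^\Ad_A$ under $j^\alpha_M$. Pulling back by $\varphi$, the isomorphism $j^\alpha_\varphi$ intertwines $\nabla\coloneq\varphi^*(\nabla^\alpha_A)$ on $\varphi^*(T_M)$ with $\nabla^\Ad_{A_\varphi}$ on $\Ad(P_\varphi)$. Hence $j^\alpha_\varphi\circ d_\nabla=d_{A_\varphi}\circ j^\alpha_\varphi$ on forms. By (\ref{j-alpha-phi-varphi*}) we have $\tilde\alpha_\varphi=j^\alpha_\varphi(\varphi_*)$, where $\varphi_*$ is regarded as an element of $A^1(Y,\varphi^*(T_M))$. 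Since $\lambda\mapsto J\lambda$ commutes with left composition by bundle isomorphisms, $J\tilde\alpha_\varphi=j^\alpha_\varphi(\varphi_*\circ J)$. So (\ref{FormulaH}) is equivalent to
$$\vec\Hg_\varphi\,\vol_\varphi=-\tfrac12\, d_\nabla(\varphi_*\circ J)\ \hbox{ in } A^2(Y,\varphi^*(T_M)).$$
By Corollary \ref{coro-LC}, $\nabla^\alpha_A$ is the Levi-Civita connection of $g^\alpha_M$. Therefore $\nabla$ is exactly the pulled-back Levi-Civita connection used in section \ref{immersions-section}.

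\textbf{Step 2: local computation.} The identity is local, so we work with a local vector field $U$ and $V=JU$. Conformality gives $|\varphi_*U|=|\varphi_*V|=\lambda>0$ and $\varphi_*U\perp\varphi_*V$, and $\vol_\varphi(U,V)=\lambda^2$. Using $J^2=-1$ we get
$$d_\nabla(\varphi_*\circ J)(U,V)=\nabla_U(\varphi_*JV)-\nabla_V(\varphi_*JU)-\varphi_*J[U,V]=-\nabla_U\varphi_*U-\nabla_V\varphi_*V-\varphi_*J[U,V].$$
We split this according to (\ref{DirectSum}).

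For the normal part, apply (\ref{expl-formula-for-Hg}) with the orthonormal frame $U/\lambda,V/\lambda$ and Remark \ref{Hg-for-space-like}. This gives $(\nabla_U\varphi_*U+\nabla_V\varphi_*V)^\perp=2\lambda^2\vec\Hg_\varphi$. So the normal component of the right-hand side, evaluated on $(U,V)$, is $-\tfrac12(-2\lambda^2\vec\Hg_\varphi)=\lambda^2\vec\Hg_\varphi$, which equals the left-hand side $\vec\Hg_\varphi\vol_\varphi(U,V)$.

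\textbf{Step 3: the tangential part vanishes.} This is the main (though mild) obstacle. The tangential component of $\nabla_X\varphi_*W$ is $\varphi_*(\nabla^Y_XW)$, where $\nabla^Y$ is the Levi-Civita connection of $\varphi^*(g^\alpha_M)$ (this is the statement about $\nabla^f$ in section \ref{immersions-section}). We must therefore show $\nabla^Y_UU+\nabla^Y_VV+J[U,V]=0$.

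Since $\varphi^*g^\alpha_M$ is conformal to $J$, it is a Hermitian metric on a Riemann surface. In real dimension two such a metric is automatically Kähler, so $\nabla^YJ=0$. Torsion-freeness gives $J[U,V]=J\nabla^Y_UV-J\nabla^Y_VU$. Then $J\nabla^Y_UV=\nabla^Y_U(J^2U)=-\nabla^Y_UU$, and $J\nabla^Y_VU=\nabla^Y_V(JU)=\nabla^Y_VV$. The three terms therefore sum to zero.

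This proves the identity of Step 1 on pairs $(U,V)$. Since both sides are $2$-forms on a surface, it holds everywhere, and applying $j^\alpha_\varphi$ gives (\ref{FormulaH}).
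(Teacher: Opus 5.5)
Your proof is correct and follows essentially the same route as the paper. Both arguments transport the identity via $j^\alpha_\varphi$ to the pulled-back Levi-Civita connection $\nabla$, expand $d_\nabla(\varphi_*\circ J)(U,JU)$ as $-\nabla_UU^\varphi-\nabla_VV^\varphi-\varphi_*J[U,V]$, and recognise the first two terms as giving $-2\vec{\Hg}_\varphi\vol_\varphi(U,V)$.

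The only difference is in how the tangential terms are handled. The paper normalises the frame at a point $y$ (orthonormal, with $[U,V]_y=0$), so that $\nabla_{U_y}U^\varphi$ and $\nabla_{V_y}V^\varphi$ are already normal and the bracket term drops out. You instead keep an arbitrary conformal frame and cancel the whole tangential part using $\nabla^YJ=0$ and torsion-freeness; this also makes explicit the K\"ahler property that the paper uses tacitly when it takes $U$ and $JU$ both parallel at $y$.
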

\begin{proof} We introduce the following notations:
\begin{itemize}
\item We denote by $g_Y$  the  Riemannian metric on $Y$ induced by $g^\alpha_M$ via $\varphi_*$, and by $g_\varphi$ be the pseudo-Riemannian metric on the pull-back bundle $\varphi^*(T_M)$  induced by $g^\alpha_M$ via $\varphi$. 
\item  We denote by $\nabla:=\varphi^*(\nabla^\alpha_A)$ the pull-back of the Levi-Civita connection $\nabla^\alpha_A$ of $(M,g^\alpha_M)$ to   $\varphi^*(T_M)$. Here we have used Corollary \ref{coro-LC}.
\end{itemize}

Note that the bundle embedding $\varphi_*:T_Y\to \varphi^*(T_M)$ is an isometric embedding with respect to the pair $(g_Y,g_\varphi)$. Since $\nabla^\alpha_A$ is compatible with $g^\alpha_M$,  it follows that $\nabla=\varphi^*(\nabla^\alpha_A)$ is compatible with $g_\varphi$. 

Our problem is local, so we may assume that the tangent bundle $T_Y$ is trivial. Since $\varphi$ is conformal, we can find $U$, $V$ be vector fields on $Y$ which are orthonormal at any point with respect to  $g_Y$, and such that $V=JU$. Let $U^\varphi$, $V^\varphi$ be the corresponding sections  of   $\varphi^*(T_M)$; they are defined by the maps
$$
x\mapsto \varphi_*(U_x),\ x\mapsto \varphi_*(V_x).
$$
Using formula (\ref{expl-formula-for-Hg}) and Remark \ref{Hg-for-space-like}, we see that,  with these notations, we have   
$$\vec{\Hg}_\varphi=\frac{1}{2}\big(\nabla_U U^\varphi+\nabla_V V^\varphi\big)^\bot,$$
where $\bot$ stands for the projection on the $g_\varphi$-orthogonal complement of the subbundle $\varphi_*(T_Y)\subset \varphi^*(T_M)$. Choose $y\in Y$ and assume that $[U,V]_y=0$. This can be achieved by using vector fields which are parallel at $y$ and by the fact that the Levi-Civita connection is torsion free. We claim that, with this choice, $\nabla_{U_y} U^\varphi$, $\nabla_{V_y} V^\varphi$ are already orthogonal to the plane $\varphi_*(T_yY)$. Indeed, taking into account that $\nabla$ is compatible with   $g_\varphi$  we get
\begin{equation*}
\begin{split}
g_\varphi(\nabla_{U_y} U^\varphi,U^\varphi_y)&=0,\\
 g_\varphi(\nabla_{V_y} U^\varphi,U^\varphi_y)&=0,	\\
  g_\varphi(\nabla_{U_y} U^\varphi,V^\varphi_y)&=-g_\varphi( U^\varphi_y,\nabla_{U_y}V^\varphi).
\end{split}	
\end{equation*}
On the other hand, since $\nabla^\alpha_A$ is  torsion free, we have
\begin{equation}\label{using-torsion-free}
\nabla_{U} V^\varphi-\nabla_{V} U^\varphi=[U,V]^\varphi.	
\end{equation}
This can be obtained in the usual way by identifying $Y$ locally around $y$ (via $\varphi$) with a submanifold of $M$ and extending  $U$, $V$ to vector fields on an open neighbourhood of $y$ in $M$ (see for instance \cite[section 2.2]{Ch1}). The condition $[U,V]_y=0$ now gives
\begin{equation}\label{after-using-torsion-free}
\nabla_{U_y} V^\varphi=\nabla_{V_y} U^\varphi,	
\end{equation}
 which shows that  
$$g_\varphi(\nabla_{U_y}U^\varphi,V_y^\varphi)=-g_\varphi( U^\varphi_y,\nabla_{U_y}V^\varphi)=-g_\varphi( U^\varphi_y,\nabla_{V_y} U^\varphi)=0,$$
so $\nabla_{U_y} U^\varphi$ is orthogonal on both $U_y$ and $V_y$. A similar argument applies to  $\nabla_{V_y} V^\varphi$.
 Therefore
\begin{equation}\label{formula-for-Hg}
\vec{\Hg}_\varphi(y)=\frac{1}{2}\big(\nabla_{U_y} U^\varphi+\nabla_{V_y} V^\varphi\big).
\end{equation}
The sections of $\Ad(P_\varphi)$ which correspond  to the sections 
$U^\varphi,\ V^\varphi\in \Gamma(Y,\varphi^*(T_M))
$
 via the bundle isomorphism $ j^\alpha_\varphi:\varphi^*(T_M)\textmap{\simeq} \varphi^*(\Ad(P))$ are $\tilde\alpha_\varphi(U)$, respectively $\tilde\alpha_\varphi(V)$.

 Since   $\nabla^\alpha_A=(j^\alpha_M)^*(\nabla^\Ad_A)$, we obtain (taking pull-back via $\varphi$) $\nabla=(j^\alpha_\varphi)^*(\nabla^\Ad_{A_\varphi})$. Therefore  formula (\ref{formula-for-Hg}) gives:
\begin{equation}\label{phi*(j)(H)}
 j^\alpha_\varphi(\vec{\Hg}_\varphi)(y)=\frac{1}{2}\big((\nabla^\Ad_{A_\varphi})_{U_y}  \tilde\alpha_\varphi(U)+\ (\nabla^\Ad_{A_\varphi})_{V_y} \tilde\alpha_\varphi(V)\big).	
\end{equation}
On the other hand
\begin{equation*}
\begin{split}
d_{A_\varphi}(J\tilde\alpha_\varphi)(U_y,V_y)&= (\nabla^\Ad_{A_\varphi})_{U_y}(J\tilde\alpha_\varphi(V))- (\nabla^\Ad_{A_\varphi})_{V_y}(J\tilde\alpha_\varphi(U))-J\tilde\alpha_\varphi([U,V]_y)\\
 &=- (\nabla^\Ad_{A_\varphi})_{U_y}(\tilde\alpha_\varphi(U))-(\nabla^\Ad_{A_\varphi})_{V_y}(\tilde\alpha_\varphi(V))-J\tilde\alpha_\varphi([U,V]_y).	
\end{split}	
\end{equation*}
The third term vanishes at $y$, so, by (\ref{phi*(j)(H)}),  $j^\alpha_\varphi(\vec{\Hg}_\varphi)(y)=-\frac{1}{2}d_{A_\varphi}(J\alpha_\varphi)(U,V)(y)$, which proves the claim.
 
\end{proof}

With these preparations we can now give the proof of Theorem \ref{main}:

\begin{proof}

(1) Let $f:Y\to P$ be a space-like $\omega^{\alpha,g}_A$-isotropic, holomorphic immersion.
\vspace{-1mm}\\
(1)(a) By  Remark \ref{NullHolChar} (\ref{second-rem}), the composition  $\varphi:=\pi\circ f$ is a space-like conformal immersion. 
\vspace{2mm}\\ 
(1)(b) The map $\tau_f:Y\to P_\varphi$ given by $y\mapsto (y,f(y))$ is a section of $P_\varphi$, so (see section \ref{connections-section}) it induces a map
$$
l^{\tau_f}_\Ad:\Ad(P_\varphi)\to \hg
$$ 
(whose restrictions to the  fibres are Lie algebra isomorphisms) and vector space isomorphisms 
$$
l^{\tau_f}_\Ad: A^k(Y,\Ad(P_\varphi))\textmap{\simeq}  A^k(Y,\hg)	
$$
denoted by the same symbol (see section \ref{connections-section} for the general formalism). We have
\begin{equation}\label{tau-f*(theta-phi)}
\tau_f^*(\theta_\varphi)=\tau_f^*(\Phi^*(\theta))=f^*(\theta)=\theta_f.	
\end{equation}
On the other hand,
\begin{equation}\label{tau-f*(alpha-phi)}
l^{\tau_f}_\Ad(J\tilde \alpha_\varphi)=Jl^{\tau_f}_\Ad(\tilde\alpha_\varphi)=J\tau_f^*(\alpha_\varphi)=J\tau_f^*(\Phi^*(\alpha))=Jf^*(\alpha)=J\alpha_f=-\theta_f.
\end{equation}
For the first equality we that the correspondance $\lambda\mapsto J\lambda$ defined in section \ref{connections-section} obviously commutes with left compositions by vector bundle isomorphisms, for the second  equality we have used (\ref{obv-id}), and for the last equality we have used formulae (\ref{HolCond}).
Using formulae (\ref{dA-with-tau}), (\ref{tau-f*(alpha-phi)}) and Proposition \ref{dthetaf:pr}, it follows
$$
l^{\tau_f}_\Ad(d_{A_\varphi}(J\tilde\alpha_\varphi))=(d+[\theta_f\wedge .])(l^{\tau_f}_\Ad(J\tilde\alpha_\varphi))=-d\theta_f-\theta_f\wedge\theta_f=-\theta_f\wedge\theta_f.
$$
 Proposition \ref{FormulaHLm} now gives
\begin{equation}\label{l-tau-f-j-alpha-phi(H)}
l^{\tau_f}_\Ad(j^\alpha_\varphi(\vec{\Hg}_\varphi)\vol_\varphi)=-\frac{1}{2}l^{\tau_f}_\Ad(d_{A_\varphi}(J\alpha_\varphi))=\frac{1}{2}\theta_f\wedge\theta_f.
\end{equation}

Recall the notation $[\cdot,\cdot]_\alpha^\varphi\coloneq \varphi^*([\cdot,\cdot]_\alpha)\in A^2(Y,\varphi^*(T_M))$ introduced in section \ref{main-th-subsection}. Since the bundle isomorphism $j^\alpha_\varphi:\varphi^*(T_M)\to \Ad(P_\varphi)$  induces fibrewise Lie algebra isomorphisms  and using (\ref{j-alpha-phi-varphi*}),  we have 
\begin{equation*} 
j^\alpha_\varphi([\cdot,\cdot]^\varphi_\alpha)(u,v)=j^\alpha_\varphi([\varphi_*(u),\varphi_*(u)]_\alpha)=[j^\alpha_\varphi(\varphi_*(u)),j^\alpha_\varphi(\varphi_*(v))]=[\tilde\alpha_\varphi(u),\tilde\alpha_\varphi(v)]
\end{equation*}
for any $y\in Y$, $u$, $v\in T_yY$.  Therefore
\begin{equation}\label{new-eqq}
j^\alpha_\varphi([\cdot,\cdot]^\varphi_\alpha)=\frac{1}{2}[\tilde\alpha_\varphi\wedge \tilde\alpha_\varphi]
\end{equation}
in $A^2(Y,\Ad(P_\varphi))$, where, on the right, $[\cdot,\cdot]$ stands for the canonical fibrewise Lie algebra bracket on the bundle $\Ad(P_\varphi)$. 
Since $l^{\tau_f}_\Ad$ also  induces  Lie algebra  isomorphisms on the fibres, we infer
\begin{equation}\label{l-tau-Ad([])}
l^{\tau_f}_\Ad(j^\alpha_\varphi([\cdot,\cdot]^\varphi_\alpha)=\frac{1}{2}l^{\tau_f}_\Ad([\tilde\alpha_\varphi\wedge \tilde\alpha_\varphi])=\frac{1}{2}[l^{\tau_f}_\Ad(\tilde\alpha_\varphi)\wedge l^{\tau_f}_\Ad(\tilde\alpha_\varphi)].
\end{equation}
Now note that $\tilde\alpha_\varphi=\widetilde{\Phi^*(\alpha)}$, so the general formula (\ref{obv-id}) gives 
$$l^{\tau_f}_\Ad(\tilde\alpha_\varphi)=\tau_f^*(\Phi^*(\alpha))=f^*(\alpha)=\alpha_f,$$
and (\ref{l-tau-Ad([])}) becomes
\begin{equation}\label{l-tau-Ad([])-new}
l^{\tau_f}_\Ad(j^\alpha_\varphi([\cdot,\cdot]^\varphi_\alpha)=\frac{1}{2}[\alpha_f\wedge\alpha_f].	
\end{equation}
Using (\ref{l-tau-f-j-alpha-phi(H)}), (\ref{l-tau-Ad([])-new}) and the known equality (\ref{alpha-alpha=theta-theta}), we obtain
$$l^{\tau_f}_\Ad(j^\alpha_\varphi(\vec{\Hg}_\varphi)\vol_\varphi)=l^{\tau_f}_\Ad(j^\alpha_\varphi([\cdot,\cdot]^\varphi_\alpha),$$
in $A^2(Y,\hg)$, so $\vec{\Hg}_\varphi\vol_\varphi=[\cdot,\cdot]^\varphi_\alpha$ in $A^2(Y,\varphi^*(T_M))$ as claimed. \\

(2) Let $\varphi:Y\to M$ be a space-like conformal immersion whose mean curvature vector field is $\vec{\Hg}_\varphi=\varphi^*([\cdot,\cdot])/\vol_\varphi$. The data of a (local) lift  of $\varphi$ to $P$ is equivalent to the data of a (local) section $\tau$ in the pull-back bundle 
$$\pi_\varphi:P_\varphi\coloneq Y\times_\varphi P\to Y.$$
Using the notations introduced in Proposition \ref{FormulaHLm}, formulae (\ref{HolCond}) show that a (local) lift $f$ of $\varphi$ is holomorphic if and only if the associated section $\tau_f=(\id_Y,f)$ of $P_\varphi$ is parallel with respect to the connection $B_\varphi\coloneq A_\varphi+ J\alpha_\varphi$. By formula (\ref{Omega(A+alpha)}) the curvature form $F_{B_\varphi}\coloneq\tilde\Omega_{B_\varphi}\in A^2(Y,\Ad(P_\varphi))$  of this connection is
$$F_{B_\varphi}=\varphi^*(F_A)+d_{A_\varphi} (J\tilde\alpha_\varphi)+\frac{1}{2} [(J\tilde\alpha_\varphi)\wedge (J\tilde\alpha_\varphi)]=d_{A_\varphi} (J\tilde\alpha_\varphi)+[\tilde\alpha_\varphi\wedge\tilde \alpha_\varphi].
$$
For the second equality, we  used the obvious equality $[(J\tilde\alpha_\varphi)\wedge (J\tilde\alpha_\varphi)]=[\tilde\alpha_\varphi\wedge\alpha_\varphi]$, and the integrability condition $\Omega_A=\frac{1}{2}[\alpha\wedge\alpha]$, which yields $F_A=\frac{1}{2}[\tilde\alpha\wedge\tilde\alpha]$.  Using Proposition \ref{FormulaHLm} and the assumption $\vec{\Hg}_\varphi=\varphi^*([\cdot,\cdot])/\vol_\varphi$ gives $F_{B_\varphi}=0$, so ${B_\varphi}$ is flat, which proves the claim. 

 \end{proof}
 
 \subsection{Weierstrass representations and simple Weierstrass representations}
 \label{Weierstrass-repr-section}
 
In this section we use the notations introduced in section \ref{main-th-subsection} and we assume the hypothesis   of Theorem \ref{main}. 
 
We define:
 
 \begin{dt} Let $\varphi:Y\to M$ be a space-like conformal immersion whose mean curvature vector field $\vec{\Hg}_\varphi$ is given by formula (\ref{Hg-general-formula}).
 A Weierstrass  representation of $\varphi$ is commutative diagram
 $$
 \begin{tikzcd}
 \tilde Y \ar[r, "\tilde f"] \ar[d, "c"'] & P\ar[d, "\pi"]\\
 Y\ar[r, "\varphi"]& M	
 \end{tikzcd}, \eqno{(W)}
 $$
 where $c$ is a covering map of Riemann surfaces and $\tilde f$ is holomorphic.
 
 A simple Weierstrass representation of $\varphi$ is a holomorphic lift $f:Y\to P$ of $\varphi$.
 \end{dt}

The terminology "simple Weierstrass representation" used here was chosen to agree with  the terminology introduced in \cite[Section 6.1]{BDHH} in the special framework of   bundles of the the form $\Gamma\backslash \SL(2,\C)\to \Gamma\backslash \SL(2,\C)/\SU(2)$ associated with a lift $\Gamma$ of a torsion free Kleinian group $\Gamma_0\subset \PGL(2,\C)$ (see section {\ref{Bryant-for-Gamma-section} in this article}). Note that

\begin{re}
For any (simple) Weierstrass representation (W) of $\varphi$, the map $\tilde f:\tilde Y\to P$ (respectively $f:Y\to P$) is not only holomorphic, but also a space-like 	$\omega^{\alpha,g}_A$-isotropic immersion.
\end{re}

Taking into account Theorem \ref{main} and its proof we obtain:
\begin{pr}\label{existence-Weierstrass}
Let $\varphi:Y\to M$ be a space-like conformal immersion whose mean curvature vector field $\vec{\Hg}_\varphi$ is given by formula (\ref{Hg-general-formula}). Then
\begin{enumerate} 
\item $\varphi$ admits a	 Weierstrass  representation (W), where $c$ is a normal covering and $\tilde f$ is a $\varphi$-covering morphism of principal bundles which is equivariant with respect to a group monomorphism $\Aut_Y(\tilde Y)\hookrightarrow H$.
\item $\varphi$ admits a simple Weierstrass representation if and only if the holonomy of the connection $B_\varphi$ on the bundle $P_\varphi\coloneq \varphi^*(P)$  is trivial (see the proof of Theorem \ref{main} (2)).
\end{enumerate}
\end{pr}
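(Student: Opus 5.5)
The plan is to use the flat connection $B_\varphi=A_\varphi+J\alpha_\varphi$ on $P_\varphi=\varphi^*(P)$ constructed in the proof of Theorem \ref{main} (2). That proof gives two facts. First, a local lift $f$ of $\varphi$ is holomorphic if and only if the section $\tau_f=(\id_Y,f)$ of $P_\varphi$ is $B_\varphi$-parallel. Second, $B_\varphi$ is flat because $\vec{\Hg}_\varphi$ is given by (\ref{Hg-general-formula}).

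For (2), a global holomorphic lift $f:Y\to P$ is the same as a global $B_\varphi$-parallel section $\tau_f$ of $P_\varphi$. It is a standard fact that a principal bundle with a flat connection over a connected base admits a global parallel section if and only if the holonomy group of the connection is trivial. The "if" direction goes by parallel transport of a point $\xi_0\in(P_\varphi)_{y_0}$: this is path independent when the holonomy is trivial, and it gives a smooth parallel section. For "only if", the existence of a parallel section $\tau$ forces every holonomy element $h$ at $\tau(y_0)$ to satisfy $\tau(y_0)h=\tau(y_0)$, hence $h=e$. Composing with $\Phi$ turns sections into lifts, which proves (2).

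For (1), I would fix $y_0\in Y$ and $\xi_0\in (P_\varphi)_{y_0}$. Since $B_\varphi$ is flat, its holonomy at $\xi_0$ is described by a homomorphism $\rho:\pi_1(Y,y_0)\to H$. Let $K=\ker\rho$ and let $c:\tilde Y\to Y$ be the covering associated with $K$. This covering is normal because $K$ is a normal subgroup, and $\Aut_Y(\tilde Y)\simeq\pi_1(Y,y_0)/K$, which $\rho$ embeds into $H$. Give $\tilde Y$ the pulled-back complex structure, so that $c$ is a covering map of Riemann surfaces.

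The connection $c^*(B_\varphi)$ on $c^*(P_\varphi)$ is flat, and its holonomy is $\rho|_{K}$, which is trivial. So by the argument of (2) applied to $\varphi\circ c$ there is a global parallel section $\tilde\tau$, and $\tilde f:=\Phi\circ\tilde\tau$ is holomorphic; here one checks that $c^*(B_\varphi)=B_{\varphi\circ c}$, using that $c$ is holomorphic. Concretely, $\tilde f([\gamma])$ is $\Phi$ applied to the $B_\varphi$-parallel transport of $\xi_0$ along the path $\gamma$ starting at $y_0$.

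The equivariance is the main point to get right, namely the order and sign conventions in the holonomy. For a deck transformation $\sigma_{[\delta]}$, concatenating paths gives
$$\tilde f(\sigma_{[\delta]}(\tilde y))=\tilde f(\tilde y)\,\rho([\delta])^{\pm1},$$
because parallel transport commutes with the right $H$-action. To obtain a genuine monomorphism $\Aut_Y(\tilde Y)\hookrightarrow H$, I would fix the convention so that this is a homomorphism, replacing $\rho$ by $\rho(\cdot)^{-1}$ composed with inversion, or using the opposite group if needed. With that choice, $\tilde f$ is a $\varphi$-covering morphism of principal $\Aut_Y(\tilde Y)$-bundles into the $H$-bundle $P$ that is equivariant along this monomorphism. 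Commutativity $\pi\circ\tilde f=\varphi\circ c$ holds by construction.
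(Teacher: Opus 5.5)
Your proposal is correct and follows essentially the paper's route. The paper takes $\tilde Y$ to be the maximal integral manifold of the flat connection $B_\varphi$ through $(y_0,\xi_0)$ inside $P_\varphi$ and sets $\tilde f=\Phi|_{\tilde Y}$; this is your covering of $\ker\rho$ under the identification sending $[\gamma]$ to the $B_\varphi$-parallel transport of $\xi_0$ along $\gamma$. Your explicit argument for (2), the check $c^*(B_\varphi)=B_{\varphi\circ c}$, and your attention to the homomorphism versus anti-homomorphism convention supply details the paper leaves implicit.
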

\begin{proof}
(1) Fix $y_0\in Y$,  $\xi_0\in P_{\varphi(x_0)}$ and $h_{y_0}^{B_\varphi}:\pi_1(Y,y_0)\to \Aut_H(P_{\varphi(x_0)})$  the holonomy morphism associated with the flat connection $B_\varphi$. Let $\tilde Y\subset P_\varphi$ be the maximal integral manifold  of $B_\varphi$ (regarded as involutive involution on $P_\varphi$) which passes through $(y_0,\xi_0)$ \cite[Proposition 1.2, p. 10]{KN}.  By the general theory of flat connections, the restriction $\pi_\varphi|_{\tilde Y}:\tilde Y\to Y$ is a normal covering  which corresponds to the normal subgroup $\ker(h_{y_0}^{B_\varphi})\subset \pi_1(Y,y_0)$ and whose fibre  over $y_0$ coincides with $\im(h_{y_0}^{B_\varphi})(\xi_0)$. The deck transformation group $\Aut_Y(\tilde Y)$ is identified with the stabiliser of the subset $\tilde Y\subset P_\varphi$  with respect to  the canonical right $H$-action, and is isomorphic to $\im(h_{y_0}^{B_\varphi})$.

Endowing $\tilde Y$ with the holomorphic structure induced from $Y$, the proof of Theorem \ref{main} (2) shows that the restriction $\tilde f\coloneq \Phi|_{\tilde Y}$ is holomorphic.
\end{proof}

Note that the second statement of Proposition \ref{existence-Weierstrass} is a generalisation of \cite[Theorem 6.3 p. 31]{BDHH}.

 \section{Real forms of complex Lie groups} \label{RealForms-section}

Theorem \ref{main} yields a generalisation of Bryant's theorem in the framework of principal bundles. The input data intervening in this theorem is a 4-tuple 
 $$(P\textmap{\pi}M,\alpha,A,g),$$
  where $P\textmap{\pi}M$ is a principal $H$-bundle, $(\alpha,A)\in A^1_\Ad(P,\hg)\times{\cal A}(P)$ is a solution of the differential system (\ref{ZSys}) with 
 $\alpha$ admissible, and $g$ an $\Ad$-invariant, non-degenerate, bilinear form on $\hg$.
 
We will see below that any real form $H$ of a complex Lie group $G$ yields in a canonical way a triple $(P\textmap{\pi}M,\alpha,A)$ as above. Therefore, as a consequence of our general Theorem \ref{main}, we obtain a Bryant type theorem which concerns conformal space-like immersions $Y\to G/H$ from a Riemann surface into any homogeneous pseudo-Riemannian space of the form $G/H$. 

\subsection{The Bryant theorem for real forms of complex Lie groups}\label{RealForms-subsection}
  
Let $G$ be a connected complex Lie group, $J_G$ its canonical almost complex structure and let $H$ a real form of $G$, i.e. a closed subgroup whose Lie algebra $\hg$ is a real form of the Lie algebra $\g$ of $G$. We don't require $H$ to be connected. Note that $i\hg$ is an $\Ad_H$-invariant complement of $\hg$ in $\g$. Indeed, for any $h\in H$, the inner automorphism $\iota_h\in\Aut(G)$ is holomorphic, so its tangent map $\Ad_h=(\iota_h)_{*e}$ at the unit $e\in G$ is $\C$-linear.  This shows in particular that $(G,H)$ is a reductive pair (see \cite[Example 4, p. 165]{SaWa}).

Let $\eta\in A^1(G,\g)$ be the canonical left invariant $\g$-valued 1 form on $G$ (voir \cite[p.  41]{KN}). This form is defined by
$$
\eta(v)=l_{\gamma^{-1}*}(v) \hbox{ for any } \gamma\in G \hbox{ and for any }v\in T_\gamma G.
$$
One can write formally $\eta=\gamma^{-1}d\gamma$. It is well-known that $\eta\circ J_G=i\eta$, i.e. $\eta$ is of type (1,0), and, regarded as a fibrewise $\C$-linear map on the holomorphic tangent bundle ${\cal T}_G$, it is holomorphic. 

Using the direct sum decomposition $\g=\hg\oplus i\hg$, we can decompose the form $\eta$ as
$$
\eta=\theta-i\alpha,
$$
where $\theta$ is the real part and $-\alpha$ is the imaginary part of $\eta$ with respect to the real structure defined by the real form $\hg$ of $\g$.

We will regard the canonical submersion $\pi_H:G\to G/H\eqcolon M$ as a principal $H$-bundle. 

\begin{re}\label{alpha-A-of-real-form}
\begin{enumerate}
\item The forms $\alpha$, $\theta\in A^1(G,\hg)$ are left invariant. 
\item $\alpha$ is an admissible 	tensorial form of type $\Ad_H$ on $G$. 
\item $\theta$ is the connection form of a left invariant connection $A$ on $G$.
\item The almost complex structure $J^\alpha_A$ on $G$ coincides with the integrable almost complex structure $J_G$, in particular, by Theorem \ref{mainZe}, the pair $(\alpha,A)$ is a solution of the differential system (\ref{ZSys}).
\end{enumerate}
  	
\end{re}

 Let $g$ be a non-degenerate,  $\Ad_{H}$-invariant,   symmetric, $\R$-bilinear form on $\hg$ and let $g^\bot$ be the induced bilinear form on $i\hg$ via the obvious isomorphism $\hg\textmap{\simeq}i\hg$.
 The associated pseudo-Riemannian metric $g^\alpha_M$ in the sense of section \ref{section-for-g-alpha-M-omega} is just the unique $G$-invariant metric $g_M$ on $M$ which agrees with   $g^\bot$ at $[e]_H$ via the obvious isomorphism $i\hg\textmap{\simeq}T_{[e]_H}M$. The pseudo-Hermitian metric $\g^\alpha_A$  is the unique left invariant pseudo-Riemannian metric $\g$ on $G$ which coincides with  $g\oplus g^\bot$ at $e$. Finally, the holomorphic form $\omega^{\alpha,g}_A$   introduced in section  \ref{section-for-g-alpha-M-omega} is the unique left invariant symmetric, $\C$-bilinear form $\omega^g_H$ on $G$ which coincides at $e$ with the $\C$-bilinear extension $g^\C$ of $g$. Note that, since $g$ is $\Ad_H$-invariant, it follows that $g^\C$ is also $\Ad_H$-invariant, so also $\Ad_G$-invariant. Therefore $\omega^g_H$ is a   holomorphic, bi-invariant, non-degenerate, symmetric form on $G$. 
 
 The Lie bracket $[\cdot,\cdot]_\alpha:T_M\times_M T_M\to T_M$ induced by $\alpha$ on $T_M$ depends only on $H$, so it will be denoted by $[\cdot,\cdot]_H$.
 
As a special case of our main Theorem \ref{main}, we obtain:
 
 \begin{thry} \label{main-real-forms}  Let $H$ be a real form of $G$,  $g$ an $\Ad_H$-invariant  non-degenerate bilinear symmetric form   on $\hg$ and $(Y,J)$ a Riemann surface. Endow $G$ with the  pseudo-Hermitian metric $\g=g\oplus g^\bot$ and the holomorphic symmetric $\C$-bilinear form $\omega^g_H$, and endow $M\coloneq G/H$ with the pseudo-Riemannian  metric $g_M$. 
 
\begin{enumerate} 
\item 	Let $f:Y\to G$  a space-like $\omega^g_H$-isotropic holomorphic immersion.   
\begin{enumerate} 
\item	The composition $\varphi\coloneq \pi\circ f:Y\to M$ is a space-like conformal immersion.
\item Using the canonical Lie algebra bundle structure $[\cdot,\cdot]_H$ on $T_M$, the mean curvature vector field  of the immersion $\varphi$ is given by the formula 
\begin{equation} \label{Hg-RealForms-formula}
\vec{\Hg}_\varphi=\varphi^*([\cdot,\cdot]_H)/\vol_\varphi\,,
\end{equation}

where, on the right hand side, the Lie bracket $[\cdot,\cdot]_H$ is regarded as an element of $A^2(M,T_M)$ and $\varphi^*([\cdot,\cdot]_H)\in A^2(Y,\varphi^*(T_M))$ stands for its pull-back via $\varphi$.
In other words, for a point $y\in Y$ one has  $\vec{\Hg}_\varphi(y)=[\varphi_*u,\varphi_*v]_H$, where $(u,v)$ is a $\varphi^*(g_M)$-orthonormal basis of $T_yY$ such that $v=Ju$. 
\end{enumerate}

\item

Conversely, let $\varphi:Y\to M$ be a space-like conformal immersion  whose mean curvature vector field  is given by  formula (\ref{Hg-RealForms-formula}). The sheaf of local holomorphic lifts of $\varphi$ (which   coincides with the sheaf of local holomorphic lifts which are space-like $\omega^{g}_H$-isotropic immersions) is isomorphic to the sheaf of local sections of the $H$-bundle $Y\times_\varphi G\to Y$ which are parallel with respect to a flat connection $B_\varphi$ on this bundle; in particular, if $Y$ is simply connected, then  $\varphi$ admits a holomorphic lift $f:Y\to P$, which is unique up to  right translation by an element of $H$, and which is a space-like $\omega^{g}_H$-isotropic immersion.

\end{enumerate}
\end{thry}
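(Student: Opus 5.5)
Theorem \ref{main-real-forms} is the special case of Theorem \ref{main} for the 4-tuple $(G\textmap{\pi_H}G/H,\alpha,A,g)$, where $\eta=\theta-i\alpha$ is the decomposition of the Maurer--Cartan form. The plan is to verify that this 4-tuple satisfies the hypotheses of Theorem \ref{main}, and that the objects it produces agree with those named in the statement. Then both parts follow verbatim.

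First I establish Remark \ref{alpha-A-of-real-form}.

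\textbf{Left invariance.} $\alpha$ and $\theta$ are left invariant because $\eta$ is, and taking real and imaginary parts with respect to the real structure $\hg$ commutes with pull-back by left translations.

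\textbf{Vertical values.} For $a\in\hg$ the fundamental vector field is $a^\#_\gamma=l_{\gamma*}(a)$. Hence $\eta(a^\#)=a\in\hg$, which gives $\theta(a^\#)=a$ and $\alpha(a^\#)=0$.

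\textbf{Right translations.} For $h\in H$ one has $R_h^*\eta=\Ad_{h^{-1}}\eta$, and $\Ad_{h^{-1}}$ is $\C$-linear and preserves $\hg$. Therefore $R_h^*\theta=\Ad_{h^{-1}}\theta$ and $R_h^*\alpha=\Ad_{h^{-1}}\alpha$.

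\textbf{Consequences.} Together these show that $\theta$ is a connection form, whose horizontal distribution is $A_\gamma=l_{\gamma*}(i\hg)$. They also show that $\alpha$ is tensorial of type $\Ad$. Admissibility holds because $\alpha_\gamma$ maps $l_{\gamma*}(i\hg)$ isomorphically onto $\hg$, via $ib\mapsto -b$.

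\textbf{Identifying the complex structure.} For the identity $J^\alpha_A=J_G$ I use Lemma \ref{eta-holom}(1)--(2): $\eta$ is of type $(1,0)$ for $J^\alpha_A$ and is a pointwise isomorphism onto $\g$. The same holds for $J_G$, since $\eta\circ J_G=i\eta$. Both structures are therefore the pull-back of multiplication by $i$ on $\g$ via $\eta_\gamma$, so they coincide. By Theorem \ref{mainZe}(1), the pair $(\alpha,A)$ then solves (\ref{ZSys}). If I want to avoid relying on that lemma, I can check directly that $j^\alpha_A(l_{\gamma*}(ib))=l_{\gamma*}(-b)$, and that this matches $J_G(l_{\gamma*}ib)=l_{\gamma*}(-b)$.

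Next I identify the induced geometric data.

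\textbf{The metric $g^\alpha_M$.} By (\ref{g-compos-pi*}), $g^\alpha_M(\pi_*u,\pi_*v)=g(\alpha u,\alpha v)$. At $e$, with $u=ia$ and $v=ib$, this equals $g(a,b)=g^\bot(ia,ib)$. Left invariance of $\alpha$ makes $g^\alpha_M$ $G$-invariant, so $g^\alpha_M=g_M$.

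\textbf{The metric $\g^\alpha_A$ and the form $\omega^{\alpha,g}_A$.} By (\ref{formula-for-g-on-P}), $\g^\alpha_A$ is left invariant and equals $g\oplus g^\bot$ at $e$. Likewise $\omega^{\alpha,g}_A=g^\C(\eta,\eta)$ is left invariant and equals $g^\C$ at $e$, so it is $\omega^g_H$.

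\textbf{The bracket.} The bracket $[\cdot,\cdot]_\alpha$ is by definition the one denoted $[\cdot,\cdot]_H$.

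\textbf{The pull-back bundle.} $P_\varphi$ is exactly $Y\times_\varphi G$.

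With these identifications, parts (1)(a), (1)(b) and (2) of the statement are literal translations of the corresponding parts of Theorem \ref{main}. The claimed coincidence of the two sheaves of lifts is Remark \ref{NullHolChar}(\ref{second-rem}).

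The only step needing real care is the identification $J^\alpha_A=J_G$, in particular keeping the sign convention $\eta=\theta-i\alpha$ consistent with the definition of $j^\alpha_A$. Everything else is bookkeeping with left invariance and evaluation at $e$.
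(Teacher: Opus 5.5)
Your proposal is correct and follows the same route as the paper. The paper obtains Theorem \ref{main-real-forms} as the special case of Theorem \ref{main} for the 4-tuple $(G\to G/H,\alpha,A,g)$, using Remark \ref{alpha-A-of-real-form} together with the identifications $g^\alpha_M=g_M$, $\g^\alpha_A=g\oplus g^\bot$, $\omega^{\alpha,g}_A=\omega^g_H$ and $[\cdot,\cdot]_\alpha=[\cdot,\cdot]_H$. You additionally supply the checks the paper leaves as ``easy to see'': equivariance, admissibility via $ib\mapsto -b$, and $J^\alpha_A=J_G$ via Lemma \ref{eta-holom}.
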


\subsection{Examples}
\label{Examples-subsection}

\begin{ex}
$G=\C^n$,  and $g$ the usual inner product on $H=i\R^n$.	 The associated symmetric form $\omega^g_H$ on $\C^n$ is   $-\sum_{i=1}^n dz_i \otimes dz_i$. The reductive homogeneous space $(M,g_M)$ can be identified with the flat $\R^n$ and Theorem \ref{main} specialises to the classical holomorphic  representation theorem for minimal conformal immersions $Y\to\R^n$ in terms of isotropic holomorphic immersions $Y\to \C^n$ (see Theorems \ref{Th1-intro}, \ref{Th2-intro} in section \ref{intro}). 
\end{ex}

\begin{ex} (minimal immersions in tori) Let $\Lambda\subset\R^n$ be a lattice and $T_\Lambda\coloneq \R^n/\Lambda$ the associated torus endowed with its canonical flat metric.  We identify $T_\Lambda$ with the quotient $\C^n/\Lambda+ i\R^n$, noting that $ \Lambda+ i\R^n$ is obviously a (non-connected) real form of $\C^n$ in our sense. 

Let $(Y,J)$ be a connected Riemann surface,  let $\varphi:Y\to T_\Lambda$ be a minimal conformal immersion, and let 
$$
 \begin{tikzcd}
 \tilde Y \ar[r, "\tilde f"] \ar[d, "c"'] & \C^n\ar[d, "\pi_{\Lambda+i\R^n}"]\\
 Y\ar[r, "\varphi"]& T_\Lambda=\qmod{\R^n}{\Lambda	+i\R^n}
 \end{tikzcd},  
 $$
 be a Weierstrass representation, where $c$ is a normal covering map   of Riemann surfaces and $\tilde f$ is a holomorphic immersion which is equivariant with respect to a group morphism $\Aut_Y(\tilde Y)\to \Lambda+i\R^n$ (see Proposition \ref{existence-Weierstrass} (1)). The form $d\tilde f=(d\tilde f_1,\dots,d\tilde f_n)$ obviously descends to $Y$ defining a  $\C^n$-valued holomorphic form $\beta=(\beta_1,\dots,\beta_n)$ on $Y$ with the property
 \begin{equation}\label{cond1-omega}
 \int_\gamma \beta\in 	\Lambda+i\R^n \hbox{ for any smooth loop $\gamma$ in $Y$.}
 \end{equation}
 Since $\tilde f$ is $\sum_i dz_i\otimes dz_i$-isotropic, we have 
\begin{equation}\label{cond2-omega}
\sum_{j=1}^n \beta_j^2=0.	
\end{equation}
Now note that, for a smooth path $\nu:[a,b]\to Y$ in $Y$, we have 
 \begin{equation}
 \varphi(\nu(b))-\varphi(\nu(a))	=\int_\nu \beta + (\Lambda	+i\R^n),
 \end{equation}
which shows that  $\beta$ determines  $f$ up to translations in $T_\Lambda$. Theorem \ref{main-real-forms}  gives the following Weierstrass representation theorem for minimal surfaces in tori (compare with \cite[Theorem 2.1]{ArMi}):
\begin{pr}
The assignment $\varphi\mapsto \beta$ defines 	a bijection between the set of equivalence classes of minimal conformal immersions $\varphi:Y\to T_\Lambda$ (modulo translations in $T_\Lambda$) and the set of nowhere vanishing forms $\beta\in \Omega^1(Y,\C^n)$ satisfying conditions (\ref{cond1-omega}), (\ref{cond2-omega}).  Fixing $y_0\in Y$, the minimal immersion associated with such a form $\beta$ is given by 
$$
\varphi(y)=\tau_0+\bigg(\int_{\nu_y} \beta\bigg) + (\Lambda	+i\R^n), 
$$
where $\tau_0\in T_\Lambda$ and $\nu_y:[0,1]\to Y$ is a smooth path with $\nu_y(0)=y_0$, $\nu_y(1)=y$.

\end{pr}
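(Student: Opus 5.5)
The plan is to specialise Theorem \ref{main-real-forms} to $G=\C^n$ and the real form $H=\Lambda+i\R^n$. Here $H$ is abelian, so the bracket $[\cdot,\cdot]_H$ vanishes identically. Formula (\ref{Hg-RealForms-formula}) therefore reads $\vec\Hg_\varphi=0$: the immersions in question are exactly the minimal conformal immersions into the flat torus $M=\C^n/(\Lambda+i\R^n)=T_\Lambda$. Here $\pi_{\Lambda+i\R^n}$ is induced by $\Re$, and $g$ is positive definite, so "space-like" just means "immersion". The symmetric form $\omega^g_H$ is (up to sign) $\sum_j dz_j\otimes dz_j$. I will construct the map $\varphi\mapsto\beta$, show it is well defined, and then construct the inverse.

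\emph{From $\varphi$ to $\beta$.} By Proposition \ref{existence-Weierstrass} (1), $\varphi$ admits a Weierstrass representation $(W)$ with $c$ normal and $\tilde f$ equivariant with respect to a monomorphism $\Aut_Y(\tilde Y)\hookrightarrow \Lambda+i\R^n$, the group acting on $\C^n$ by translations.
\begin{itemize}
\item Since translations preserve $dz_j$, the form $d\tilde f$ is $\Aut_Y(\tilde Y)$-invariant and descends to a holomorphic $\beta$ on $Y$.
\item By the Remark following the definition of Weierstrass representations, $\tilde f$ is an $\omega^g_H$-isotropic immersion. This gives (\ref{cond2-omega}), and $\beta$ is nowhere vanishing.
\item For (\ref{cond1-omega}), take a loop $\gamma$ at $y$ and lift it to a path in $\tilde Y$ from $\tilde y$ to $\sigma(\tilde y)$ for some deck transformation $\sigma$. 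Then $\int_\gamma\beta=\tilde f(\sigma\tilde y)-\tilde f(\tilde y)$, which is the image of $\sigma$ in $\Lambda+i\R^n$.
\end{itemize}

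\emph{Well-definedness.} The point needing care, and the one I expect to be the main obstacle, is that $\beta$ must depend only on $\varphi$ and not on the chosen Weierstrass representation. Locally any lift of $\varphi$ to $\R^n$ equals $\Re\tilde f$, so $\Re\beta=d\varphi$ in local $\R^n$-coordinates. Since $\beta$ is of type $(1,0)$, it is then forced to be $\beta=2\partial\varphi=d\varphi-i\,d\varphi\circ J$, up to the sign convention on $J$. This expression is intrinsic to $\varphi$ and invariant under translations of $T_\Lambda$, so $\varphi\mapsto\beta$ is a well-defined map on classes modulo translations. The same identity $\Re\beta=d\varphi$, integrated along paths, gives the displayed relation $\varphi(\nu(b))-\varphi(\nu(a))=\int_\nu\beta \bmod (\Lambda+i\R^n)$. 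Hence $\beta$ determines $\varphi$ up to translation, which is injectivity.

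\emph{Surjectivity.} Given a nowhere vanishing holomorphic $\beta$ satisfying (\ref{cond1-omega}) and (\ref{cond2-omega}), define $\varphi$ by the displayed formula.
\begin{itemize}
\item Condition (\ref{cond1-omega}) makes $\varphi$ independent of the path $\nu_y$.
\item Locally on a simply connected open set, $\varphi=\pi_{\Lambda+i\R^n}\circ f$ with $f=\int\beta$ holomorphic.
\item $f$ is an immersion because $\beta\neq0$, hence space-like since the metric on $\C^n$ is Euclidean.
\item $f$ is $\omega^g_H$-isotropic by (\ref{cond2-omega}) and Remark \ref{isotropy-cond}.
\end{itemize}
Theorem \ref{main-real-forms} (1) then shows that $\varphi$ is a conformal immersion with $\vec\Hg_\varphi=0$, i.e. minimal. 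By construction $d f=\beta$, so this $\varphi$ is sent back to $\beta$. This gives bijectivity and the stated formula, with $\tau_0=\varphi(y_0)$.
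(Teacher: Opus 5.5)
Your proposal is correct and follows the paper's own route. That route obtains a normal Weierstrass representation from Proposition~\ref{existence-Weierstrass}~(1), descends $d\tilde f$ to $\beta$, reads off the period and isotropy conditions, uses the path-integral formula for injectivity, and invokes Theorem~\ref{main-real-forms}~(1) for the converse. Your observation that $\beta=d\varphi-i\,d\varphi\circ J$ is intrinsic to $\varphi$ usefully makes explicit that $\beta$ does not depend on the chosen Weierstrass representation, a point the paper leaves implicit.
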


\end{ex}

\begin{ex}\label{simple-group-ex}
Let $G$ be a  simple complex Lie group, $B$ be the Killing form of $\g$, and $\omega$ be the bi-invariant symmetric holomorphic 2-form on $G$ determined by the condition $\omega_e=B$. Let $H$ be a real form of $G$. The restriction $B_\hg$ of  $B$  to $\hg$ coincides with the Killing form of $\hg$ \cite[Remark 1, p. 316]{SaWa}; this restriction  is $\R$-valued and non-degenerate.

Let $H$ be a real form  of $G$ and  $c\in \R^*$. Endow  the Lie algebra $\hg$ with the non-degenerate symmetric bilinear form $g_H(c)\coloneq c B_\hg$. With this choice we have  $\omega^{g_H(c)}_H=c \omega$, so  the  isotropic-holomorphy conditions  associated with all  pairs of the form $(H, c B_\hg)$  coincide.  

Consider for example the  special case $G=\SL(2,\C)$. This group admits  up two conjugacy two real forms: the compact real form $\SU(2)$, and the split real form $\SL(2,\R)$ (which is equivalent to  $\SU(1,1)$). Choosing  $c=-\frac{1}{2}$ for $\SU(2)$,  and $c=\frac{1}{2}$ for $\SL(2,\R)$ we obtain a model for the hyperbolic space $\H^3$, respectively a model for the de Sitter space $\mathrm{dS}_3$.

This shows that Bryant's theorem (see Theorem \ref{Th3-intro}) and the correspondences  proved in    \cite{AiAk}, \cite{Fu} are special cases of  Theorem \ref{main-real-forms} and Corollary \ref{scalar-mean-curv-coro}.\\

\end{ex}

\begin{ex}
Let $G$ be a connected reductive complex group, $K\subset G$ be maximal compact subgroup,  and  $g$ an $\Ad_K$-invariant inner product its Lie algebra $\kg$. The exponential map gives a diffeomorphism $i\kg\to G/K$, so in this case the reductive homogeneous space $M$ is contractible. The induced metric $g_M$ on this quotient is Riemannian, complete, and has non-positive sectional curvature.
\end{ex}

\begin{ex}\label{Example-reductive}
Let $G$ be a connected complex reductive group, $\theta$  a Weyl involution, and $\tau$    a Cartan involution on $G$ which commutes with $\theta$ \cite[section 2]{Ak}. Let   $K\coloneq G^\tau$ ($K'\coloneq G^{\theta\tau}$) be the fixed point locus of $\tau$ (respectively $\theta\tau$), which is a $\theta$-invariant maximal compact subgroup of $G$  (respectively a split real form of $G$). We denote by $\theta_*$ ($\tau_*$) the induced $\C$-linear (anti-linear) involution on $\g$.  The Lie algebra $\kg'$ decomposes as $\kg_1^{\theta_*}\oplus i\kg_{-1}^{\theta_*}$, where on the right we used the notation $\kg^{\theta_*}_\lambda\coloneq\ker(\theta_*-\lambda\id_\kg)$. Choose an inner product $g$ on $\kg$ which is $\Ad_K$-invariant, and $\theta$-invariant. Let $g^\C:\g\times\g\to \C$ be its $\C$-bilinear extension. The restriction $g'$ of $g^\C$ to $\kg'\times\kg'$ is an $\R$-valued, non-degenerate symmetric bilinear form, which is positive definite on  $\kg_1$ and negative definite on  $i\kg_{-1}$. The signature of $g'$ is 
$$\dim_\R(\kg_1^{\theta_*})-\dim_\R(\kg_{-1}^{\theta_*})=\dim_\C(\g_1^{\theta_*})-\dim_\C(\g_{-1}^{\theta_*})=-\rk(G).$$
The second equality follows using the explicit formulae for $\theta_*$ in a canonical system of generators \cite[p. 18]{On}.
Since $g'$ is the restriction of $g^\C$ to the real form $\kg'$ of $\g$, we have $g'^\C=g^\C$. In others words one has $\omega^g_{K'}=\omega^{g'}_K$, 
so  the  isotropy conditions  associated with the pairs $(K,g)$, $(K',g')$ coincide. We obtain the diagram

\begin{equation}
\begin{tikzcd}[column sep=4mm]
&G\ar[dl, bend right =30, "\pi_K"'] \ar[dr, bend left =30, "\pi_{K'}"]&\\
M\coloneq G/K & &	M'\coloneq G/K'
\end{tikzcd}	
\end{equation}
in which $G$ is endowed with the holomorphic symmetric form $\omega^g_K=\omega^{g'}_{K'}$, the left hand quotient $M$ with the Riemannian metric $g_M$, and  the right hand quotient $M'$ with the pseudo-Riemannian metric $g'_{M'}$.

A standard example of this type is obtained in the case $G=\GL(n,\C)$ by taking 
$$S\stackrel{\theta}{\mapsto} (\trp{\hspace{-0.6mm} S})^{-1},\ S\stackrel{\tau}{\mapsto}(S^*)^{-1},$$
which gives $K=\U(n)$, $K'=\GL(n,\R)$. Taking $g(a,b)=-\Tr(ab)$ for $a$, $b\in\u(n)$, the forms $g^\C$ and $g'$ will be given by the same formula written for matrices in $\gl(n,\C)$, $\gl(n,\R)$ respectively.

The quotient $\GL(n,\C)/\U(n)$ can be identified with the space of Hermitian inner products on $\C^n$, or, equivalently with the space $\Herm_+(n)$ of positive Hermitian matrices. Via this identification, the bundle map $\GL(n,\C)\to \GL(n,\C)/\U(n)$ becomes
$$\pi:\GL(n,\C)\to \Herm_+(n),\ \pi(S)=SS^*.
$$
A positive Hermitian matrix $h\in \Herm_+(n)$ defines a Hermitian inner product $\langle \cdot,\cdot\rangle_h$ given by
$$
\langle u,v\rangle_h= \trp{\bar u}hv=\sum_{i,j}\bar u_i h_{ij}v_j=\langle u, hv\rangle. 
$$
It is well known that the tangent space $T_h(\Herm_+(n))=\Herm(n)$ at any $h\in \Herm_+(n)$ can be identified with the space $\Herm_h\subset\gl(n,\C)$  of matrices which are Hermitian with respect to $\langle \cdot,\cdot\rangle_h$ via the isomorphism
$$T_h(\Herm_+(n))=\Herm(n)\textmap{\iota_h\simeq} \Herm_h,\ u\mapsto h^{-1}u.
$$

A simple computation shows that, via these identifications, the Lie bracket $[\cdot,\cdot]_{\U(n)}$ on the tangent bundle $T_{\Herm_+(n)}$ is given by the formula
\begin{equation}\label{[]U(n)}
\forall (\chi,\chi') \in\Herm_h\times\Herm_h,\  [\chi,\chi']_{\U(n)}=\frac{i}{2}[\chi,\chi'].
\end{equation}

Similarly, the quotient $\GL(n,\C)/K'=\GL(n,\C)/\GL(n,\R)$ can be identified with the space  $R(n)$  of real structures on $\C^n$, i.e.  with the space of anti-linear involutions of $\C^n$. Via this identification the bundle map $\GL(n,\C)\to \GL(n,\C)/\GL(n,\R)$  becomes
$$
\pi':\GL(n,\C)\to R(n),\  \pi'(S)=S\circ c\circ S^{-1},
$$
where $c$ denotes the standard conjugation. The tangent space $T_\sigma R(n)$   at  an anti-linear involution $\sigma\in R(n)$ coincides with the vector space of anti-linear endomorphisms of $\C^n$ which anti-commute with $\sigma$, i.e. which are pure imaginary with respect to $\sigma$. A simple computation shows that, via this identification, the Lie bracket $[\cdot,\cdot]_{\GL(n,\R)}$ on the tangent bundle $T_{R(n)}$ is given by the formula
\begin{equation}\label{[]GL(n,R)}
\forall (\chi,\chi') \in T_\sigma R(n)\times T_\sigma R(n),\  [\chi,\chi']_{\GL(n,\R)}=-\frac{i}{2}[\chi,\chi']\sigma.
\end{equation}

Using formulae (\ref{[]U(n)}), (\ref{[]GL(n,R)}) and Theorem \ref{main-real-forms} (1)(b), one obtains explicit formulae for the mean curvature vector field of the conformal immersions which intervene in the Bryant type correspondences associated with the real forms $\U(n)$, $\GL(n,\R)$ of $\GL(n,\C)$ and the symmetric forms $g$, respectively $g'$. In both cases the isotropy condition is  associated with the bi-invariant symmetric form on $G$ which coincides with $(a,b)\mapsto -\Tr(ab)$ at the unit element $I_n\in\GL(n,\C)$.

\end{ex}

\begin{ex} (An infinite dimensional Bryant correspondence)

Let $X$ be a compact differentiable manifold and $E$ a complex vector bundle on $X$. 
The complex gauge group ${\cal G}^E\coloneq \Aut(E)$ is open in the complex vector space $A^0(X,\End(E))$, and, after suitable Sobolev completions,  it becomes an infinite dimensional complex Lie group (see \cite{DK}, \cite{Te}). Let $h_0$ be a Hermitian metric on $E$ and let ${\cal G}^E_{h_0}\coloneq \Aut(E,h)$ be (a suitable Sobolev completion of) the  gauge group of unitary automorphisms of  the Hermitian bundle $(E,h_0)$. Denoting by $\GL(E)$, $\U(E,h_0)$ the group bundles of fibrewise linear (respectively unitary) isomorphisms, we have ${\cal G}^E=\Gamma(X,\GL(E))$, ${\cal G}^E_{h_0}=\Gamma(X,\U(E,h_0))$. Omitting again Sobolev completions,  the Lie algebras of the two gauge groups are 
$$
\mathrm{Lie}({\cal G}^E)=A^0(X,\End(E)), \ \mathrm{Lie}({\cal G}^E_{h_0})=A^0(X, \u(E,h_0)),
$$
where $\u(E,h_0)$ denotes the real vector bundle of anti-Hermitian (with respect to $h_0$) endomorphisms. This shows that ${\cal G}^E_{h_0}$ is a real form of ${\cal G}^E$ in the sense explained in section \ref{RealForms-subsection} (extended in the obvious way to infinite dimensional Lie groups). 

Now fix an orientation and a Riemannian metric $g$ on $X$ and denote by $\vol_g$ the corresponding volume form. The formula
$$
\langle a, b\rangle_{L^2}\coloneq -\int_X \Tr (ab)\vol_g
$$
defines an $\Ad$-invariant inner product on $\mathrm{Lie}({\cal G}^E_{h_0})=A^0(X, \u(E,h_0))$.

It is well known that the quotient ${\cal M}^E\coloneq {\cal G}^E/{\cal G}^E_{h_0}$ can be identified with the space of Hermitian metrics on $E$, and, via this identification, the bundle map $\pi: {\cal G}^E\to {\cal M}^E$ is given by the formula
$$
\pi(S)(u,v)=h_0(S^*(u),S^*(v))=h_0(u,SS^*(v)),
$$
where $(-)^*$ denotes adjoint with respect to  $h_0$. For a Hermitian metric $h\in {\cal M}^E$, the tangent space $T_h{\cal M}^E$ can be identified with the space $A^0(X,\Herm(E,h))$ of endomorphisms of $E$ which are Hermitian (self-adjoint) with respect to $h$. Using these identifications, we see that the Lie algebra structure $[\cdot,\cdot]_{{\cal G}^E_{h_0}}$ on a  tangent space $T_h{\cal M}=A^0(X,\Herm(E,h))$ obtained using the general formalism of section \ref{RealForms-subsection} is given by the known formula (\ref {[]U(n)}) written for global endomorphisms $\chi$, $\chi'\in A^0(X,\Herm(E,h))$.

The holomorphic symmetric bilinear form $\omega^E$ on the complex Lie group ${\cal G}^E$  which is associated with $\langle \cdot , \cdot \rangle_{L^2}$  following the general formalism developed in \ref{RealForms-subsection} is given by
\begin{equation} 
\omega^E_S(u,v)=-\int_X \Tr(S^{-1}u\; S^{-1}v)\vol_g
\end{equation}
for any $S\in {\cal G}^E$ and $u$, $v\in T_S{\cal G}^E=A^0(X,\End(E))$.

\begin{re} Let $Y$ be a Riemann surface. The general Theorem \ref{main-real-forms} is also valid for the triple 
$$
({\cal G}^E, {\cal G}^E_{h_0}, \langle\cdot,\cdot\rangle_{L^2})
$$
and gives a Bryant type correspondence between 
\begin{enumerate} 
\item Holomorphic immersions $f:Y\to 	{\cal G}^E$ which are isotropic with respect to $\omega^E$.
\item Conformal immersions $\varphi: Y\to {\cal M}^E$ whose mean curvature vector field is given by  the formula $\vec{\Hg}_\varphi=\varphi^*([\cdot,\cdot]_{{\cal G}^E_{h_0}})/\vol_\varphi$, where $[\cdot,\cdot]_{{\cal G}^E_{h_0}}$ is the Lie algebra structure on the tangent bundle $T_{{\cal M}^E}$ given by the formula
$$
[\chi,\chi']_{{\cal G}^E_{h_0}}=\frac{i}{2}[\chi,\chi']
$$
for any $h\in {\cal M}^E$ and $\chi$, $\chi'\in T_h{\cal M}^E=A^0(X,\Herm(E,h))$.
\end{enumerate}

\end{re}

\end{ex}

\subsection{The Bryant correspondence for  bundles of the form \texorpdfstring{$\Gamma\backslash G\to \Gamma\backslash G/H$}{Gamma}}\label{Bryant-for-Gamma-section}

Let $G$ be a complex Lie group and $H$ a real form of $G$ in the sense of section \ref{RealForms-subsection} and let $g$ be a $\Ad_{H}$-invariant,   symmetric, $\R$-bilinear form on its Lie algebra $\hg$.  Let $\Gamma\subset G$ be a discrete subgroup of $G$ such that the natural left $\Gamma$-action on 	$M=G/H$ is
\begin{enumerate}
\item[(PD)]  properly discontinuous,
\item[(FR)] free. 
\end{enumerate}
\begin{re}
By a well known remark of Thurston, the first condition is automatically fulfilled if $H$ is compact (see Corollary \cite[Corollary 3.5.11 p. 157]{Th}). The second condition is equivalent to: $\Gamma\bigcap ( \underset{{g\in G}}{\bigcup}gHg^{-1})=\{e\}$.
\end{re} 
Let $$
\pi_\Gamma: P_\Gamma\coloneq \Gamma\backslash G\to \Gamma\backslash M\eqcolon M_\Gamma 
$$
the principal $H$-bundle obtained from $\pi_{H}$ by taking   $\Gamma$-quotients. The connection $A$ and the tensorial form  $\alpha$ associated with the real form $H$ of $G$ (see Remark \ref{alpha-A-of-real-form}) descend to $P_\Gamma$ and the obtained couple $(\alpha_\Gamma,A_\Gamma)$ satisfies the hypothesis of Theorem \ref{main}.   Similarly, the bilinear symmetric holomorphic form $\omega^g_{H}$ on $G$ induces a symmetric holomorphic form $\omega_\Gamma$ on $P_\Gamma$ and the pseudo-Riemannian metric $g_{M}$ on $M=G/H$ induces a   pseudo-Riemannian metric $g_{M_\Gamma}$ on $M_\Gamma$. The objects $g_{M_\Gamma}$, $\omega_\Gamma$ are associated with the triple $(\alpha,A,g)$  in the sense of section \ref{g-alpha-M-omega-section}. We also obtain a Lie bracket $[\cdot,\cdot]_H^\Gamma:T_{M_\Gamma}\times_{M_\Gamma}T_{M_\Gamma}\to T_{M_\Gamma}$ induced by $[\cdot,\cdot]_H$.
 
Now let $(Y,J)$ be a Riemann surface.  Our general Theorem \ref{main} applies and gives a Bryant type correspondence between space-like holomorphic $\omega_\Gamma$-isotropic immersions 
$$f:Y\to P_\Gamma=\Gamma\backslash G$$
 and space-like conformal   immersions 
 $$\varphi:Y\to M_\Gamma =\Gamma\backslash M$$
  into the pseudo-Riemannian  manifold $(M_\Gamma,g_{M_\Gamma})$ whose mean curvature vector field is given by formula (\ref{Hg-RealForms-formula}) with $[\cdot,\cdot]_H^\Gamma$ instead of $[\cdot,\cdot]_H$ on the right. This correspondence becomes of course especially interesting when $Y$ and $M_\Gamma$ are compact.
\begin{re}
One can use the theory of orbifolds to generalise this correspondence  for discrete subgroups  $\Gamma\subset G$ satisfying only condition (PD). This condition is sufficient for defining a natural orbifold structure on the quotient $M_\Gamma$ (see \cite[Proposition 13.2.1 p. 302]{Th0}). One has to generalise the theory of connections and tensorial forms (see section \ref{connections-section}) to include principal bundles over orbifolds and also the theory of immersions (see section \ref{immersions-section}) to include the case when the target manifold is a pseudo-riemannian orbifold. 
\end{re}

\begin{ex} 
Coming back to Example  \ref{simple-group-ex}, consider the principal  bundle  
$$\pi_{\SU(2)}:\SL(2,\C)\to  \SL(2,\C)/\SU(2)=\H^3 $$
associated with the real form $\SU(2)$ of $\SL(2,\C)$. The Riemannian metric $g_{\H^3}$ on $\H^3$ associated with the inner product 
$$(a,b)\stackrel{g}{\mapsto}g(a,b)=-2\Tr(a b)=-\frac{1}{2}B_{\su(2)}(a,b)$$
on $\su(2)$ coincides the canonical hyperbolic metric on  $\H^3$. 

 Let $\Gamma_0\subset \PSL(2,\C)$ be a torsion free Kleinian group,   $\Gamma\subset \SL(2,\C)$  a lift of $\Gamma_0$ in $\SL(2,\C)$ (see \cite{Cu}). Let $M_\Gamma\coloneq \Gamma\backslash \H^3=\Gamma_0\backslash \H^3$ the associated hyperbolic 3-manifold, and 
$$
\pi_\Gamma: P_\Gamma\coloneq \Gamma\backslash\SL(2,\C)\to \Gamma\backslash \SL(2,\C)/\SU(2)= M_\Gamma 
$$
the principal $\SU(2)$-bundle obtained from $\pi_{\SU(2)}$ by taking   $\Gamma$-quotients.

 Theorem \ref{main} and Corollary \ref{scalar-mean-curv-coro} apply and give a correspondence between holomorphic $\omega_\Gamma$-isotropic immersions $f:Y\to P_\Gamma=\Gamma\backslash \SL(2,\C)$ and conformal CMC-1  immersions $\varphi:Y\to M_\Gamma =\Gamma\backslash \SL(2,\C)/\SU(2)$ into the {\it oriented} hyperbolic manifold $(M_\Gamma,g_{M_\Gamma})$.
 
 The existence of a simple Weierstrass representation of a conformal CMC-1  immersion  $\varphi:Y\to M_\Gamma$ (see section \ref{Weierstrass-repr-section}) is especially interesting when both $Y$ and $M_\Gamma$ are compact. Indeed, the existence of closed holomorphic curves of a specified genus in a  compact quotient of the form $\Gamma\backslash\SL(2,\C)$ is an old, classical problem in complex geometry. 

Winkelmann proved  that any such compact quotient admits genus 1 holomorphic curves (see \cite[Proposition 4.3.2 p. 60]{Wi}) and asked if there exists such quotients which admit holomorphic curves of genus $g\geq 2$ \cite[p. 58]{Wi}). The main result of \cite{BDHH} solves this longstanding problem by giving a positive answer to Winkelmann's question. 

Taking into account this result, the authors come to the natural question: can one obtain a holomorphic embeddings $Y\hookrightarrow  P_\Gamma $ of a compact Riemann surface $Y$ in a compact quotient $P_\Gamma=\Gamma\backslash\SL(2,\C)$    as a simple Weierstrass representation $f:Y\to P_\Gamma$ of a CMC-1 embedding $\varphi:Y\hookrightarrow M_\Gamma$? If yes, the obtained embedding will be not only holomorphic, but also $\omega_\Gamma$-isotropic.

  This gives a strong motivation for studying the existence of conformal CMC-1 embeddings (and, more generally, CMC-1  immersions) admitting a simple Weierstrass representation.
  
  \end{ex}

\end{document}